\documentclass{amsart}


\usepackage{euscript, rotating, amsthm, amssymb}

\newtheorem{theorem}{Theorem}[section]
\newtheorem{lemma}[theorem]{Lemma}
\newtheorem{remark}[theorem]{Remark}
\newtheorem{proposition}[theorem]{Proposition}
\newtheorem{corollary}[theorem]{Corollary}

\newtheorem{definition}[theorem]{Definition}

\newcommand{\PenkovPetukhovBr}[1]{\mathrm{span}\{#1\}}
\title{On ideals in $\operatorname{U}(\mathfrak{sl}(\infty))$, $\operatorname{U}(\mathfrak o(\infty)),$ $\operatorname{U}(\mathfrak{sp}(\infty))$}
\author{Ivan Penkov, Alexey Petukhov}

\begin{document}
\begin{abstract} We provide a review of results on two-sided ideals in the enveloping algebra $\operatorname{U}(\mathfrak g(\infty))$ of a locally simple Lie algebra $\mathfrak g(\infty)$. We pay special attention to the case when $\mathfrak g(\infty)$ is one of the finitary Lie algebras $\mathfrak{sl}(\infty), \mathfrak o(\infty), \mathfrak{sp}(\infty)$. The main results include a description of all integrable ideals in $\operatorname{U}(\mathfrak g(\infty))$, as well as a criterion for the annihilator of an arbitrary (not necessarily integrable) simple highest weight module to be nonzero. This criterion is new for $\mathfrak g(\infty)=\mathfrak o(\infty), \mathfrak{sp}(\infty)$. All annihilators of simple highest weight modules are integrable ideals for $\mathfrak g(\infty)=\mathfrak{sl}(\infty),$ $\mathfrak o(\infty)$. Finally, we prove that the lattices of ideals in $\operatorname{U}(\mathfrak o(\infty))$ and $\operatorname{U}(\mathfrak{sp}(\infty))$ are isomorphic.\\
{\bf Keywords:} Primitive ideals, finitary Lie algebras, highest weight modules, $\mathfrak{osp}$-duality.
\end{abstract}


\maketitle

\section{Introduction and outline of results}
The purpose of this paper is to provide a review of results on two-sided ideals in the enveloping algebra $\operatorname{U}(\mathfrak g(\infty))$ of an infinite-dimensional Lie algebra $\mathfrak g(\infty)$
obtained as the inductive limit of an arbitrary
chain of embeddings of simple finite-dimensional Lie
algebras\begin{equation}\mathfrak g(1)\hookrightarrow\mathfrak g(2)\hookrightarrow...\hookrightarrow\mathfrak
g(n)\hookrightarrow...\label{Ech1}\end{equation}with $\lim\limits_{n\to\infty}\dim\mathfrak
g(n)=\infty$. We mostly focus on the simple finitary complex Lie algebras $$\mathfrak g(\infty)=\mathfrak{sl}(\infty), \mathfrak o(\infty), \mathfrak{sp}(\infty)$$
for which we establish some new results, so this article is a combination of a review and a research article.

A simplest motivation for the study of the Lie algebras $\mathfrak{sl}(\infty), \mathfrak o(\infty), \mathfrak{sp}(\infty)$ and their representations is the necessity to study stabilization effects in the representation theory of the classical simple finite-dimensional Lie algebras $\mathfrak{sl}(n), \mathfrak o(n), \mathfrak{sp}(2n)$ when $n\to\infty$. At a deeper level, the challenge is to develop a representation theory of $\mathfrak g(\infty)=\mathfrak{sl}(\infty), \mathfrak o(\infty), \mathfrak{sp}(\infty)$ which 
does not refer to $n$ as in $\mathfrak{sl}(n)$, $\mathfrak o(n)$ or $\mathfrak{sp}(2n)$.

There have been some first successes in this direction, for example the discovery of the category of tensor modules $\mathbb T_{\mathfrak g(\infty)}$~\cite{DPS}; this category can be considered as ``the common core'' of the categories of finite-dimensional representations of all classical finite-dimensional Lie algebras of given type $\mathfrak{sl}, \mathfrak o$ or $\mathfrak{sp}$.

The study of ideals in $\operatorname{U}(\mathfrak g(\infty))$, especially primitive ideals, is another topic in which there are interesting results. The reason for studying primitive ideals is clear: Dixmier's observation that classifying primitive ideals in $\operatorname{U}(\mathfrak g)$ is a potentially manageable task while classifying all irreducible representations of a Lie algebra $\mathfrak g$ is unrealistic, applies with full strength to the case of $\mathfrak g(\infty)=\mathfrak{sl}(\infty), \mathfrak o(\infty), \mathfrak{sp}(\infty)$. Despite the fact that we do not have yet the classification of primitive ideals of $\operatorname{U}(\mathfrak g(\infty))$, we hope that we are close to such a classification, and that it will be useful to have a single source for the results achieved so far\footnote{Recently we have shown that all primitive ideals of $\operatorname{U}(\frak{sl}(\infty))$ are integrable~\cite{PP3}. This, together with Proposition~\ref{Pclsm} of the present paper, yields a classification of primitive ideals of $\operatorname{U}(\frak{sl}(\infty))$. As a consequence, Problem c) below is now also answered in the affirmative for $\frak{sl}(\infty)$ via Theorem~5.4.}.

The main effect which distinguishes the case of $\operatorname{U}(\mathfrak g(\infty))$ from the case of $\operatorname{U}(\mathfrak g)$ for a finite-dimensional Lie algebra $\mathfrak g$ is that $\operatorname{U}(\mathfrak g(\infty))$ has ``fewer'' ideals than $\operatorname{U}(\mathfrak g)$: we conjecture that $\operatorname{U}(\mathfrak g(\infty))$ has only countably many ideals. This latter statement is partially supported by the fact that the annihilator in $\operatorname{U}(\mathfrak g(\infty))$ of a generic simple highest weight $\mathfrak{g}(\infty)$-module equals zero.

We now describe the contents of the paper. The ground field is algebraically closed of characteristic 0. We start with results concerning the associated pro-variety of a proper two-sided ideal $I$ in $\operatorname{U}(\mathfrak g(\infty))$ for an arbitrary locally simple Lie algebra $\mathfrak g(\infty)=\varinjlim\mathfrak g(n)$.  It turns out that $I$ can have a proper associated pro-variety (i.e. an associated pro-variety different from 0 or from the coadjoint representation $\mathfrak g(\infty)^*:=\varprojlim \mathfrak g(n)^*$) only when $\mathfrak g(\infty)$ is finitary, i.e. is isomorphic to one of the three  infinite-dimensional Lie algebras $\mathfrak{sl}(\infty)$, $\mathfrak o(\infty)$, $\mathfrak{sp}(\infty)$, see A.~Baranov's classification of simple finitary Lie algebras~\cite{Ba2}. This is one of the main results of our paper~\cite{PP1} and we do not reproduce the proof here. This result leads relatively quickly to a proof of Baranov's conjecture that $\operatorname{U}(\mathfrak g(\infty))$ admits proper two-sided ideals different from the augmentation ideals if and only if $\mathfrak g(\infty)$ is diagonal.

Diagonal locally simple Lie algebras are a very interesting generalization of the three finitary Lie algebras $\mathfrak{sl}(\infty), \mathfrak o(\infty), \mathfrak{sp}(\infty)$: a classification of diagonal locally simple Lie algebras has been given by A.~Baranov and A.~Zhilinskii~\cite{BZh1}. For a diagonal Lie algebra $\mathfrak g(\infty)$, nonisomorphic to $\mathfrak{sl}(\infty), \mathfrak o(\infty), \mathfrak{sp}(\infty)$, a classification of two-sided ideals of $\operatorname{U}(\mathfrak g(\infty))$ follows from the work of A.~Zhilinskii~\cite{Zh3}, see Section~\ref{Sclsint}.

The case $\mathfrak g(\infty)=\mathfrak{sl}(\infty), \mathfrak{o}(\infty), \mathfrak{sp}(\infty)$ is the most interesting case and it plays a distinguished role in this review. Let $\mathfrak g(\infty)=\mathfrak{sl}(\infty), \mathfrak o(\infty), \mathfrak{sp}(\infty)$. We start our study of ideals $I\subset\operatorname{U}(\mathfrak g(\infty))$ by describing all possible associated pro-varieties of such ideals. The result is surprisingly simple and quite different from the case of a finite-dimensional $\mathfrak g$: these pro-varieties depend just on one integer (rank) $r$ and form a chain
$$0\subset\mathfrak g(\infty)^{\le 1}\subset\mathfrak g(\infty)^{\le 2}\subset...\subset\mathfrak g(\infty)^{\le r}\subset...\subset\mathfrak g(\infty)^*.$$

The next step is to describe explicitly all primitive ideals with a given associated pro-variety. This is where the results under review are not yet complete, i.e. such a description is known only for a certain class of primitive ideals.

Recall that a $\mathfrak g(\infty)$-module $M$ is {\it integrable}, if any element $g\in\mathfrak g(\infty)$ acts locally finitely on $M$. An ideal is {\it integrable} if it is the annihilator of an integrable $\mathfrak g(\infty)$-module. The study of integrable ideals was initiated by A.~Zhilinskii in the 1990's. Zhilinskii introduced the concept of a coherent local system of finite-dimensional $\mathfrak g(n)$-modules: a set $\{M_n\}$ of finite-dimensional $\mathfrak g(n)$-modules such that the isomorphism classes of $\{M_{n'}\}$ and $\{{M_n}|_{\mathfrak g(n')}\}$ coincide for $n'<n$. The main breakthrough of Zhilinskii was the classification of all coherent local systems of finite-dimensional $\mathfrak g(n)$-modules~\cite{Zh1, Zh2, Zh3}. This leads to a description of integrable primitive ideals: the final result concerning a correspondence between integrable primitive ideals in $\operatorname{U}(\mathfrak g(\infty))$ and simple coherent local systems of $\mathfrak g(n)$-modules is stated in~\cite{PP1}.

Another natural approach to primitive ideals is to compute the annihilators of simple highest weight $\mathfrak g(\infty)$-modules and to compare the resulting set of primitive ideals  with primitive ideals constructed by any other means. In particular, in analogy with Duflo's theorem one may ask whether any primitive ideal in $\operatorname{U}(\mathfrak g(\infty))$ is the annihilator of a simple highest weight module.

It is well known that splitting Borel subalgebras $\mathfrak b$ of ${\mathfrak g}(\infty)$ are not conjugate under the group 
$\operatorname{Aut}(\mathfrak g(\infty))$, and form infinitely many isomorphism classes. This leads to an 
enormous ``variety'' of simple highest weight ${\mathfrak{g}}(\infty)$-modules $\operatorname{L}_{\mathfrak{b}}(\lambda)$. 
Our first result is that for $\mathfrak g(\infty)=\mathfrak{sl}(\infty), \mathfrak{o}(\infty)$ all ideals of the form 
$\operatorname{Ann}_{\operatorname{U}(\mathfrak g(\infty))}\operatorname{L}_\mathfrak b(\lambda)$ are integrable.

For $\mathfrak{g}(\infty)=\mathfrak{sp}(\infty)$, the situation is slightly different. Here we see the first example of a nonintegrable primitive ideal: this is the annihilator of a highest weight Shale-Weil (oscillator) representation of $\mathfrak{sp}(\infty)$. 

In all three cases we provide an explicit criterion on a pair $(\mathfrak b, \lambda)$ for the ideal $\operatorname{Ann}_{\operatorname{U}(\mathfrak g(\infty))}\operatorname{L}_\mathfrak b(\lambda)$ to be nonzero. This result is new
for $\mathfrak g(\infty)=\mathfrak o(\infty), \mathfrak{sp}(\infty)$ (for $\mathfrak g(\infty)=\mathfrak{sl}(\infty)$ the analogous result is presented in our recent paper~\cite{PP2}) and its proof constitutes the most technical part of the present paper.
In particular, we rely on an algorithm which computes
the partition corresponding to the nilpotent orbit whose closure is the associated variety of a given highest weight module over a classical simple finite-dimensional Lie algebra.
This algorithm is extracted from the existing literature~\cite{Jo, Lusztig, Barbasch-Vogan}, and is presented in Subsection~\ref{SSrsa}.

Here are some corollaries of our results for $\mathfrak g(\infty)=\mathfrak{sl}(\infty), \mathfrak{o}(\infty), \mathfrak{sp}(\infty)$:

$\bullet$ any prime integrable ideal is primitive;

$\bullet$ a pair $(\mathfrak b, \lambda)$ has a simple numerical invariant, the rank of $\operatorname{Ann}_{\operatorname{U}(\mathfrak g(\infty))}\operatorname{L}_\mathfrak b(\lambda)$ (an explicit formula is yet to be written);

$\bullet$ a primitive ideal is the annihilator of a unique (up to isomorphism) simple module if and only if $I$ is the annihilator of a simple object in the category $\mathbb T_{\mathfrak g(\infty)}$.

Some open problems are:

a) Are all ideals of $\operatorname{U}(\mathfrak{sl}(\infty))$ and $\operatorname{U}(\mathfrak o(\infty))$ integrable?

b) Is the lattice of two-sided ideals of $\operatorname{U}(\mathfrak g(\infty))$ n\"otherian?

c) Is any primitive ideal the annihilator of a simple highest weight module?

d) Is it true that $I=I^2$ for any (integrable) ideal?

Finally, we prove that the lattices of two-sided ideals in $\operatorname{U}(\mathfrak o(\infty))$ and $\operatorname{U}(\mathfrak{sp}(\infty))$ are isomorphic. The isomorphism is provided by the $\mathfrak{osp}$-duality functor constructed by V.~Serganova in~\cite{S}.

{\bf Acknowledgements.} Both authors have been supported in part for the last 6 years through the DFG Priority Program ``Representation Theory''. Alexey Petukhov thanks also Jacobs University Bremen for its continued hospitality. Finally, we thank the referee for reading our manuscript very carefully.

\section{Locally simple Lie algebras}

We fix an algebraically closed field $\mathbb F$ of characteristic zero. All
vector spaces (including Lie algebras) are assumed to be defined
over $\mathbb F$. If $V$ is a vector space, $V^*$ stands for the
dual space Hom$_\mathbb F(V, \mathbb F)$. All varieties we consider are algebraic varieties over $\mathbb F$ (with Zariski topology).
An ideal in a noncommutative ring always means a two-sided ideal.

By a {\it locally simple Lie algebra} we understand the inductive limit $\varinjlim \mathfrak g(n)$ of a chain~(\ref{Ech1}) of simple finite-dimensional Lie algebras.
The sign $\subset$ denotes not necessarily strict inclusion. By definition, a {\it natural
representation} (or a {\it natural module}) of a classical simple finite-dimensional Lie
algebra is a simple non-trivial finite-dimensional representation of minimal
dimension. When considering locally finite Lie algebras or their enveloping algebras we assume
that any given chain~(\ref{Ech1}) consists of inclusions, so we can freely
interchange $\lim\limits_{\to}\mathfrak g(n)$ with $\cup_n\mathfrak g(n)$, and $\lim\limits_{\to}\operatorname{U}(\mathfrak g(n))$ with $\cup_n\operatorname{U}(\mathfrak g(n))$, where $\operatorname{U}(\cdot)$ stands for enveloping algebra.


The most basic examples of locally simple Lie algebras are the three simple Lie
algebras $\mathfrak{sl}(\infty), \mathfrak o(\infty)$ and $\mathfrak{sp}(\infty)$.
These Lie algebras can be defined as respective unions of classical finite-dimensional Lie algebras of a fixed type
$\mathfrak{sl}, \mathfrak o,$ or $\mathfrak{sp}$ under the inclusions which arise from extending a natural representation by 1-dimensional increments for $\mathfrak{sl}$ and $\mathfrak o$, and by 2-dimensional increments for $\mathfrak{sp}$. An important result, see~\cite{Ba1}
or~\cite{BS}, states that, up to isomorphism, these three Lie
algebras are the only locally simple {\it finitary} Lie algebras, i.e.
locally simple Lie algebras which admit a countable-dimensional
faithful module with a basis such that the endomorphism arising from
each element of the Lie algebra is given by a matrix with finitely
many nonzero entries. In Appendix~A we give a precise definition of the Lie algebras $\mathfrak{sl}(\infty), \mathfrak o(\infty)$, $\mathfrak{sp}(\infty)$, and write down explicit bases for them.

A very interesting class of locally finite locally simple Lie algebras are the diagonal locally finite Lie algebras introduced by Y.~Bahturin and H.~Strade in~\cite{BhS}. We recall that an inclusion
$\mathfrak g(i)\subset \mathfrak g(j)$ of simple classical Lie algebras of the same type $\mathfrak{sl}, \mathfrak{o}, \mathfrak{sp}$, is {\it diagonal}
if the restriction $V(j)|_{\mathfrak g(i)}$ of a natural representation $V(j)$ of $\mathfrak g(j)$ to $\mathfrak g(i)$ is isomorphic
to a direct sum of copies of a natural $\mathfrak g(i)$-representation $V(i)$, of its dual $V(i)^*$, and
of the trivial 1{-}dimensional $\mathfrak g(i)$-representation. In this paper, by a {\it diagonal Lie
algebra} $\mathfrak g(\infty)$ we mean an infinite-dimensional Lie algebra obtained as the union $\cup_n\mathfrak g(n)$ of classical simple Lie
algebras $\mathfrak g(i)$ under diagonal inclusions $\mathfrak g(n)\subset\mathfrak g(n+1)$.  In~\cite{BZh1} A.~Baranov and A.~Zhilinskii have provided a rather complicated but explicit classification of isomorphism classes of diagonal locally simple Lie algebras. The three finitary locally simple Lie algebras are of course diagonal. An example of a diagonal nonfinitary Lie algebra is the Lie algebra $\mathfrak{sl}(2^\infty)$: by definition, $\mathfrak{sl}(2^\infty)=\varinjlim\mathfrak{sl}(2^n)$ for the chain of inclusions $\mathfrak{sl}(2^n)\subset\mathfrak{sl}(2^{n+1})$,$$A\mapsto\left(\begin{array}{cc}A&0\\0&A\end{array}\right).$$


\section{Associated pro-varieties of ideals}
Let $\mathfrak g(\infty)$ be a locally simple Lie algebra. We think of $\mathfrak g(\infty)$ as a direct limit of a fixed chain of Lie algebras~(\ref{Ech1}).  We consider ideals $I$ in the enveloping algebra $\operatorname{U}(\mathfrak g(\infty))$. We say that $I$ has {\it locally finite codimension} if the ideals $\operatorname{U}(\mathfrak g(n))\cap I$ have finite codimension in $\operatorname{U}(\mathfrak g(n))$ for all $n>0$.

In this section we outline our approach to the proof of the following theorem.
\begin{theorem}[\cite{PP1}]\label{T31} Let $\mathfrak g(\infty)$ be a locally simple Lie algebra. If $\operatorname{U}(\mathfrak g(\infty))$ admits a nonzero ideal of locally infinite codimension, then $\mathfrak g(\infty)\cong\mathfrak{sl}(\infty), \mathfrak o(\infty)$, $\mathfrak{sp}(\infty)$.\end{theorem}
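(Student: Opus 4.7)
My plan is to translate the theorem into a statement about associated pro-varieties of ideals, and then to rule out nontrivial proper pro-varieties in the non-finitary case using Baranov's classification.

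First, I would set $I_n := I \cap \operatorname{U}(\mathfrak g(n))$ and let $V(I_n) \subseteq \mathfrak g(n)^*$ denote the associated variety of $I_n$, i.e.\ the zero locus of $\operatorname{gr} I_n \subseteq \operatorname{Sym}\mathfrak g(n)$. Since $I$ is two-sided, each $V(I_n)$ is stable under the coadjoint $\mathfrak g(n)$-action. The restriction dual maps $\mathfrak g(n+1)^* \twoheadrightarrow \mathfrak g(n)^*$ carry $V(I_{n+1})$ into $V(I_n)$, yielding a compatible inverse system with limit $V(I) \subseteq \mathfrak g(\infty)^* := \varprojlim \mathfrak g(n)^*$, the associated pro-variety of $I$. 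The key dictionary is: $V(I) = \{0\}$ if and only if $I_n$ has finite codimension in $\operatorname{U}(\mathfrak g(n))$ for all sufficiently large $n$, i.e.\ $I$ has locally finite codimension; whereas $V(I) = \mathfrak g(\infty)^*$ if and only if $I = 0$. Hence the hypothesis of the theorem is equivalent to $V(I)$ being a proper pro-subvariety, strictly between $\{0\}$ and $\mathfrak g(\infty)^*$.

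The theorem now reduces to a purely geometric assertion: there exists no compatible inverse system of proper, positive-dimensional, coadjoint-invariant closed subvarieties $V_n \subseteq \mathfrak g(n)^*$ unless $\mathfrak g(\infty)$ is finitary. For finitary $\mathfrak g(\infty)$ the chain of ``matrices of rank at most $r$'' in $\mathfrak g(n)^* \simeq \mathfrak g(n)$ provides precisely such families. Conversely, in a non-finitary $\mathfrak g(\infty)$ the embeddings $\mathfrak g(n) \hookrightarrow \mathfrak g(n+1)$ are ``thick'' in a precise sense: for diagonal cases, the natural $\mathfrak g(n+1)$-module decomposes under restriction into several copies of a natural $\mathfrak g(n)$-module (and its dual); and more generally, by Baranov's classification, no non-finitary locally simple Lie algebra admits a faithful finitary representation. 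Using this, I would show that a proper coadjoint-invariant subvariety $V_{n+1} \subset \mathfrak g(n+1)^*$ pulls back to one whose natural rank-type invariants strictly exceed those of $V_n$, so the compatibility $V_{n+1} \to V_n$ cannot be maintained across an infinite tower unless each $V_n = \{0\}$.

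The main obstacle is this last step: establishing the rank-increase uniformly across all non-finitary classes. For diagonal non-finitary cases (such as $\mathfrak{sl}(2^\infty)$) one can use the explicit Baranov-Zhilinskii decomposition of natural modules under restriction to make the argument concrete, but for non-diagonal non-finitary locally simple Lie algebras the invariants of coadjoint orbits are less transparent, and one must appeal more delicately to the structural absence of finitary faithful representations. Converting these heuristics into a uniform quantitative bound on the pullbacks of proper coadjoint-invariant subvarieties is the crux of the argument.
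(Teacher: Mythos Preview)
Your overall framework --- passing to the associated graded Poisson ideal $\operatorname{gr} I \subset \operatorname{\bf S}^\cdot(\mathfrak g(\infty))$ and studying its zero pro-variety --- is exactly the paper's strategy, and your dictionary ($V(I)=\{0\}$ iff locally finite codimension, $V(I)=\mathfrak g(\infty)^*$ iff $I=0$) is correct. The reduction to a statement about Poisson ideals of locally infinite codimension is the paper's Theorem~\ref{Tlinf}.

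The genuine gap is in your final step, and you have essentially flagged it yourself. Your heuristic that ``rank-type invariants strictly exceed under pullback'' is not the mechanism the paper uses, and I do not see how to make it work: there is no monotone rank invariant that increases along the tower in a way that forces $V_n=\{0\}$. The paper's argument runs in the opposite direction. One fixes a single level $n$ where $J_n := J \cap \operatorname{\bf S}^\cdot(\mathfrak g(n))$ is nonzero and of infinite codimension. Then for every $m$, every adjoint orbit in $\operatorname{Var}(J_{m+n}) \subset \mathfrak g(m+n)^*$ projects into the proper subvariety $\operatorname{Var}(J_n) \subset \mathfrak g(n)^*$, hence non-densely. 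The crucial input is a quantitative lemma (Proposition~\ref{P321}): if some adjoint orbit in $\mathfrak g^*$ projects non-densely to $\mathfrak g'^*$ for a simple subalgebra $\mathfrak g' \subset \mathfrak g$, then either $\dim V$ is bounded in terms of $\dim \mathfrak g'$, or $\dim(\mathfrak g' \cdot V)$ is bounded in terms of $\dim\mathfrak g'$ and $\operatorname{rk}\mathfrak g'$ alone, where $V$ is a natural $\mathfrak g$-module. Applied with $\mathfrak g' = \mathfrak g(n)$ fixed and $\mathfrak g = \mathfrak g(m+n)$ varying, this bounds $\dim(\mathfrak g(n)\cdot V(m+n))$ uniformly in $m$, forcing the Dynkin indices of the embeddings $\mathfrak g(n')\hookrightarrow\mathfrak g(n'+1)$ to be $1$ for large $n'$, whence $\mathfrak g(\infty)$ is finitary.

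So the missing idea is this orbit-projection bound, which is proved in~\cite{PP1} and does not follow from Baranov's classification or from general structural facts about non-finitary algebras. Your sketch does not reach it, and the ``rank increase'' picture would need to be replaced by this dimension-bound argument.
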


We provide a sketch of proof of Theorem~\ref{T31} in Subsection~\ref{SSonpr}. Theorem~\ref{T31} is closely connected to the following result, previously conjectured by A.~Baranov.

\begin{theorem}[\cite{PP1}] \label{T32}If $\mathfrak g(\infty)$ is not (isomorphic to) a diagonal Lie algebra, then the augmentation ideal is the only nonzero proper ideal of $\operatorname{U}(\mathfrak g(\infty))$.\end{theorem}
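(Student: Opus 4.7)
The plan is to combine Theorem~\ref{T31} with the nonexistence of nontrivial coherent local systems for non-diagonal $\mathfrak g(\infty)$. Since each of the three finitary Lie algebras $\mathfrak{sl}(\infty), \mathfrak o(\infty), \mathfrak{sp}(\infty)$ is itself diagonal, the hypothesis that $\mathfrak g(\infty)$ is not diagonal in particular rules out the finitary case. Hence by Theorem~\ref{T31} every nonzero ideal $I\subset\operatorname{U}(\mathfrak g(\infty))$ has locally finite codimension: the intersection $I_n:=I\cap\operatorname{U}(\mathfrak g(n))$ has finite codimension in $\operatorname{U}(\mathfrak g(n))$ for each $n$.

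I would then set up a dichotomy using the simplicity of each $\mathfrak g(n)$. The intersection $\mathfrak g(n)\cap I$ is a Lie ideal of $\mathfrak g(n)$, hence equals $0$ or all of $\mathfrak g(n)$. If $\mathfrak g(n)\subset I$, then for $m>n$ the Lie ideal $\mathfrak g(m)\cap I$ contains the nonzero ideal $\mathfrak g(n)$, and simplicity forces $\mathfrak g(m)\cap I=\mathfrak g(m)$, so the condition propagates upward. Thus either there exists $n_0$ with $\mathfrak g(n)\subset I$ for all $n\geq n_0$, in which case $I\supseteq\mathfrak g(\infty)$ and the maximality of the augmentation ideal together with the properness of $I$ force $I$ to be the augmentation ideal; or $\mathfrak g(n)\cap I=0$ for every $n$, the case one must rule out.

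In this remaining case, $A_n:=\operatorname{U}(\mathfrak g(n))/I_n$ is a finite-dimensional associative algebra in which $\mathfrak g(n)$ embeds as a Lie subalgebra, so $A_n$ is a nontrivial finite-dimensional left $\mathfrak g(n)$-module. The equality $I_m=I\cap\operatorname{U}(\mathfrak g(m))=I_n\cap\operatorname{U}(\mathfrak g(m))$ yields $\operatorname{Ann}_{\operatorname{U}(\mathfrak g(m))}A_m=\operatorname{Ann}_{\operatorname{U}(\mathfrak g(m))}(A_n|_{\mathfrak g(m)})$, so the sets of irreducible $\mathfrak g(m)$-composition factors of $A_m$ and of $A_n|_{\mathfrak g(m)}$ coincide. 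From this compatible family I would extract a genuine coherent local system $\{L_n\}$ of nonzero finite-dimensional $\mathfrak g(n)$-modules by carefully choosing irreducible composition factors (and multiplicities) along the chain.

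The contradiction is then supplied by Zhilinskii's work~\cite{Zh1,Zh2,Zh3} together with the Baranov--Zhilinskii classification~\cite{BZh1}: non-diagonal locally simple Lie algebras admit no nontrivial coherent local systems of finite-dimensional modules. Since we have produced one, $\mathfrak g(\infty)$ would have to be diagonal, contradicting the hypothesis. The main obstacle I anticipate is precisely this last extraction step: the compatibility between $A_m$ and $A_n|_{\mathfrak g(m)}$ is only at the level of annihilators (equivalently, of sets of composition factors), whereas a CLS requires the stronger condition $L_n|_{\mathfrak g(m)}\cong L_m$ as $\mathfrak g(m)$-modules, so some combinatorial bookkeeping with multiplicities and a cofinal-subchain argument will be needed to produce the genuine CLS.
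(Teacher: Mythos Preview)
Your approach is essentially the paper's: Theorem~\ref{T32} is deduced from Theorem~\ref{T31} together with Zhilinskii's result (stated immediately after Theorem~\ref{T32}) that the existence of a proper nonzero ideal of locally finite codimension other than the augmentation ideal forces $\mathfrak g(\infty)$ to be diagonal. You are supplying the details behind that second step, and your dichotomy on $\mathfrak g(n)\cap I$ correctly isolates the augmentation ideal from the remaining case.

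Your anticipated obstacle, however, rests on a misreading of the definition of coherent local system. In this paper (see the formal Definition at the start of Section~\ref{Sclsint}) a c.l.s.\ is a collection of \emph{sets} $Q_n\subset\operatorname{Irr}\mathfrak g(n)$ satisfying two compatibility conditions, not a system of single modules $L_n$ with $L_n|_{\mathfrak g(m)}\cong L_m$. With the correct definition, simply take $Q_n$ to be the set of isomorphism classes of simple constituents of $A_n=\operatorname{U}(\mathfrak g(n))/I_n$. Your annihilator computation $\operatorname{Ann}_{\operatorname{U}(\mathfrak g(m))}A_m=I_m=\operatorname{Ann}_{\operatorname{U}(\mathfrak g(m))}(A_n|_{\mathfrak g(m)})$ already verifies both conditions: any simple $\mathfrak g(m)$-module annihilated by $I_m$ is a constituent of $A_m$, hence lies in $Q_m$ (downward closure); and every $N\in Q_m$ appears in $A_n|_{\mathfrak g(m)}$, hence in $M|_{\mathfrak g(m)}$ for some simple constituent $M\in Q_n$ of $A_n$ (upward extension). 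The hypothesis $\mathfrak g(n)\cap I=0$ guarantees that $\mathfrak g(n)$ acts nontrivially on $A_n$, so the c.l.s.\ is nontrivial, and the corollary of Zhilinskii's classification quoted in Section~\ref{Sclsint} (``nontrivial c.l.s.\ $\Rightarrow$ diagonal'') finishes the argument. No multiplicity bookkeeping or cofinal-subchain extraction is needed.

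One wording nit: where you write ``the Lie ideal $\mathfrak g(m)\cap I$ contains the nonzero ideal $\mathfrak g(n)$,'' note that $\mathfrak g(n)$ is only a subalgebra of $\mathfrak g(m)$, not an ideal; the argument still goes through because $\mathfrak g(m)\cap I$ is itself a nonzero Lie ideal of the simple algebra $\mathfrak g(m)$.
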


Theorem~\ref{T32} is implied by Theorem~\ref{T31} by use of the following result proved by A.~Zhilinskii.

\begin{theorem}[\cite{Zh2}]If, for a locally simple Lie algebra $\mathfrak g(\infty)$, the algebra $\operatorname{U}(\mathfrak g(\infty))$ admits an ideal $I$ of locally finite codimension, then $\mathfrak g(\infty)$ is diagonal.\end{theorem}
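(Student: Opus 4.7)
The plan is to encode the ideal $I$ as a coherent local system of finite-dimensional $\mathfrak g(n)$-modules and then invoke Zhilinskii's classification of such systems. Set $I_n := I\cap\operatorname{U}(\mathfrak g(n))$ and $A_n := \operatorname{U}(\mathfrak g(n))/I_n$; the hypothesis ensures that each $A_n$ is a finite-dimensional algebra. Left multiplication makes $A_n$ into a finite-dimensional $\mathfrak g(n)$-module, and since $\mathfrak g(n)$ is semisimple this module decomposes as a direct sum of simple finite-dimensional $\mathfrak g(n)$-modules. Let $\mathcal L_n$ denote the set of isomorphism classes of simple constituents occurring in $A_n$.

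The next step is to verify that $\{\mathcal L_n\}$ is a coherent local system. From $I_n = I_{n+1}\cap\operatorname{U}(\mathfrak g(n))$ one obtains an injection $A_n\hookrightarrow A_{n+1}$ of $\mathfrak g(n)$-modules, so every class in $\mathcal L_n$ appears as a $\mathfrak g(n)$-summand of $A_{n+1}|_{\mathfrak g(n)}$ and hence in the restriction of some class in $\mathcal L_{n+1}$. Conversely, any simple $\mathfrak g(n)$-constituent of an $M\in\mathcal L_{n+1}$ is annihilated by $I_n\subset I_{n+1}$, so it is a simple $A_n$-module and therefore belongs to $\mathcal L_n$. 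This matches the definition of a coherent local system recalled in the introduction.

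Unless $I$ coincides with the augmentation ideal, the CLS $\{\mathcal L_n\}$ contains at least one nontrivial simple module: otherwise $\mathfrak g(n)$ would act by zero on each $A_n$, forcing $\mathfrak g(\infty)\subset I$ and hence $I$ to equal the augmentation ideal. Excluding this trivial case, Zhilinskii's classification of coherent local systems~\cite{Zh1,Zh2,Zh3} asserts that a CLS containing a nontrivial simple module exists only when $\mathfrak g(\infty)$ is diagonal, giving the conclusion.

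The bulk of the work is encoded in that classification, which is the main obstacle. Its content is essentially combinatorial-representation-theoretic: for a nondiagonal inclusion $\mathfrak g(n)\subset\mathfrak g(n+1)$, the restriction of any nontrivial simple $\mathfrak g(n+1)$-module produces simple $\mathfrak g(n)$-constituents whose highest weights grow beyond any prescribed finite set as $n$ increases, so that the sets $\mathcal L_n$ cannot close up under restriction. The CLS packaging above simply transfers the ideal-theoretic problem onto this deep branching statement, which is what one would have to establish (or cite) in full.
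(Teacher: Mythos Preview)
Your proposal is correct and follows exactly the route the paper indicates: the paper does not give its own proof of this theorem but cites Zhilinskii and remarks that ``Zhilinskii's proof is based on a notion of coherent local systems of modules,'' and you have faithfully reconstructed that reduction by turning the locally-finite-codimension ideal into a (finite-type) c.l.s.\ via the quotients $A_n=\operatorname{U}(\mathfrak g(n))/I_n$ and then appealing to Zhilinskii's result that a nontrivial c.l.s.\ forces $\mathfrak g(\infty)$ to be diagonal. Your handling of the augmentation-ideal exception and your identification of $\mathcal L_n$ with the simple $A_n$-modules (so that every simple $\mathfrak g(n)$-constituent of $A_{n+1}$ lands in $\mathcal L_n$) are both sound; as you note, the genuine content lies in Zhilinskii's branching analysis, which neither you nor the paper reproduces.
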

Zhilinskii's proof is based on a notion of coherent local systems of modules for $\mathfrak g(\infty)$ which we review in Section~\ref{Sclsint}.

\subsection{Associated varieties and Poisson ideals}\label{SSavpi}
Let $\mathfrak g$ be a (finite- or infinite-dimensional) Lie algebra and $I\subset\operatorname{U}(\mathfrak g)$ be an ideal in the enveloping algebra
$\operatorname{U}(\mathfrak g)$ of $\mathfrak g$. The degree filtration \{$\operatorname{U}(\mathfrak g)^{\le
d}\}_{d\in\mathbb Z_{\ge0}}$ on $\operatorname{U}(\mathfrak g)$ defines the filtration $$\{I\cap \operatorname{U}(\mathfrak g)^{\le
d}\}_{d\in\mathbb Z_{\ge0}}$$ on $I$. The associated graded object
$\operatorname{gr} I:=\oplus_d((I\cap\operatorname{U}(\mathfrak g)^{\le d})/(I\cap\operatorname{U}(\mathfrak g)^{\le
d-1}))$ is an ideal of $\operatorname{gr}(\operatorname{U}(\mathfrak g))=\operatorname{\bf S}^\cdot(\mathfrak g)$. We denote the set of zeros of $\operatorname{gr} I$ in $\mathfrak g^*$ by $\operatorname{Var}(I)\subset\mathfrak g^*$. The variety is called the {\it associated variety} of $I$. We denote by $\operatorname{rad}(\operatorname{gr} I)$ the radical of $\operatorname{gr} I$ and consider $\operatorname{\bf S}^\cdot(\mathfrak g)/\operatorname{rad}(\operatorname{gr} I)$ as ``the algebra of polynomial functions on $\operatorname{Var}(I)$''.

The symmetric algebra $\operatorname{\bf S}^\cdot(\mathfrak g)$ carries a natural adjoint action of $\mathfrak g$, and any ideal which is stable under this action is called {\it Poisson} (if $J$ is such an ideal then $\operatorname{\bf S}^\cdot(\mathfrak g)/J$ also carries a natural Poisson structure). It is clear that $\operatorname{gr} I$ is Poisson. If $\mathfrak g$ is a finite-dimensional Lie algebra or a locally simple Lie algebra, it is clear that $\operatorname{rad}(\operatorname{gr} I)$ is Poisson. This Poisson structure on $\operatorname{\bf S}^\cdot(\mathfrak g)$ is a powerful tool in the study of ideals of $\operatorname{U}(\mathfrak g)$.

If $\mathfrak g=\mathfrak g(\infty)$ is a locally simple Lie algebra, then $\operatorname{Var}(I)$ is a pro-variety, i.e. a projective limit of algebraic varieties. Indeed, fix a sequence~(\ref{Ech1}) and let
$\overline{\operatorname{pr}_{\mathfrak g(n)}\operatorname{Var}(I)}\subset\mathfrak g_n^*$
be the closure of the image of $\operatorname{Var}(I$) under the natural projection
$\operatorname{pr}_{\mathfrak g(n)}: \mathfrak g(\infty)^*\to\mathfrak g(n)^*$; by definition,
$\overline{\operatorname{pr}_{\mathfrak g(n)}\operatorname{Var}(I)}\subset\mathfrak g(n)^*$
is the set of zeros of $(\operatorname{gr} I)\cap\operatorname{\bf S}^\cdot(\mathfrak g(n))$ in $\mathfrak g(n)^*$. The space $\mathfrak g(\infty)^*$ equals the projective limit
$\varprojlim \mathfrak g(n)^*$, and therefore $\operatorname{Var}(I)\subset\mathfrak
g(\infty)^*$ is the projective limit of the algebraic varieties
$\overline{\operatorname{pr}_{\mathfrak g(n)}\operatorname{Var}(I)}$.

\subsection{On the proof of Theorem~\ref{T31}}\label{SSonpr} If an ideal $I\subset\operatorname{U}(\mathfrak g(\infty))$ is of locally infinite codimension, then the ideal gr$I\subset\operatorname{\bf S}^\cdot(\mathfrak g(\infty))$ is also of locally infinite codimension. Therefore Theorem~\ref{T31} follows from Theorem~\ref{Tlinf} below.
\begin{theorem}\label{Tlinf}Let $\mathfrak g(\infty)$ be any locally simple Lie algebra. If {\bf S}$^\cdot(\mathfrak g(\infty))$ admits a nonzero Poisson ideal of locally infinite codimension, then $\mathfrak g(\infty)=\mathfrak{sl}(\infty), \mathfrak o(\infty)$, $\mathfrak{sp}(\infty)$.\end{theorem}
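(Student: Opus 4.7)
The plan is to translate the algebraic condition on $J$ into geometric data on the coadjoint pro-variety $\mathfrak g(\infty)^*$ and then force $\mathfrak g(\infty)$ to be finitary. Given a nonzero Poisson ideal $J\subset\operatorname{\bf S}^\cdot(\mathfrak g(\infty))$ of locally infinite codimension, set $J_n:=J\cap\operatorname{\bf S}^\cdot(\mathfrak g(n))$; for $n$ large this is a nonzero Poisson ideal of $\operatorname{\bf S}^\cdot(\mathfrak g(n))$, and its zero locus $V_n\subset\mathfrak g(n)^*$ is a proper, closed, coadjoint-invariant subvariety of positive dimension. The inclusions $\mathfrak g(n)\hookrightarrow\mathfrak g(n+1)$ yield projections $\pi_n:\mathfrak g(n+1)^*\to\mathfrak g(n)^*$ with $\overline{\pi_n(V_{n+1})}\subseteq V_n$, so $\operatorname{Var}(J)=\varprojlim V_n$ is a nontrivial invariant pro-subvariety of $\mathfrak g(\infty)^*$.

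Next I would use the Killing form on each $\mathfrak g(n)$ to identify $\mathfrak g(n)^*\cong\mathfrak g(n)$ equivariantly, so that $V_n$ becomes a closed adjoint-invariant subvariety of $\mathfrak g(n)$, i.e. a union of adjoint orbits whose irreducible components are closures of symplectic leaves for the Kirillov-Kostant structure. Using Jordan decomposition I would reduce to families $\{V_n\}$ whose generic orbits are either semisimple of controlled type or nilpotent of controlled rank. The central geometric step is to show that the compatibility $\overline{\pi_n(V_{n+1})}\subseteq V_n$ is so restrictive that, in the non-finitary case, no family of proper positive-dimensional invariant subvarieties can survive the inverse limit: a dimension count, together with branching of adjoint orbits under the dual of an arbitrary inclusion of simple classical Lie algebras, should force either $V_n=\{0\}$ for all large $n$ (contradicting locally infinite codimension) or $V_n=\mathfrak g(n)^*$ for all large $n$ (contradicting $J\ne 0$).

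From the surviving compatible family in the remaining case one extracts a functional $\xi\in\mathfrak g(\infty)^*$ whose restrictions $\xi|_{\mathfrak g(n)}$ cut out coadjoint orbits of uniformly bounded complexity, and the existence of such $\xi$ exhibits $\mathfrak g(\infty)$ as acting by finite-rank endomorphisms on a countable-dimensional faithful module. Baranov's classification of simple finitary Lie algebras, cited in the paper, then delivers $\mathfrak g(\infty)\cong\mathfrak{sl}(\infty),\mathfrak o(\infty)$, or $\mathfrak{sp}(\infty)$. The main obstacle is the middle step: controlling branching of adjoint orbits along arbitrary chains of embeddings of simple classical Lie algebras, since the Baranov-Zhilinskii classification of diagonal inclusions does not directly produce such orbit information and non-diagonal inclusions are even less explicit. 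The cleanest route is probably to work not with $V_n$ as a whole but with its generic orbits and track a numerical ``complexity'' invariant (e.g. the rank of a generic element of $\mathfrak g(n)$ coming from $V_n$ acting on a natural module), showing directly that non-finitary inclusions cannot sustain a bounded-complexity sequence of such orbits.
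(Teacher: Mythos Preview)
Your overall architecture is right and matches the paper's: pass to the radical, set $J_n=J\cap\operatorname{\bf S}^\cdot(\mathfrak g(n))$, look at the zero loci $V_n\subset\mathfrak g(n)^*$, and exploit the fact that orbits in $V_{m+n}$ project into the proper closed set $V_n$. But the proof as written has a genuine gap at precisely the point you flag. The paper does not argue via a dichotomy ``$V_n=\{0\}$ or $V_n=\mathfrak g(n)^*$'' nor by extracting a single functional $\xi$ and showing finite-rank action. Instead, the key technical ingredient is a sharp quantitative lemma (Proposition~\ref{P321}): for a simple subalgebra $\mathfrak g'\subset\mathfrak g$ with $\mathfrak g$ classical and $V$ natural, if \emph{some} adjoint orbit in $\mathfrak g^*$ has non-dense image in $\mathfrak g'^*$, then either $\dim(\mathfrak g'\cdot V)<2(\dim\mathfrak g'-\operatorname{rk}\mathfrak g')(\operatorname{rk}\mathfrak g'+1)$ or $2\dim\mathfrak g'+2\ge\dim V$. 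Applying this with $\mathfrak g'=\mathfrak g(n)$ and $\mathfrak g=\mathfrak g(m+n)$ bounds $\dim(\mathfrak g(n)\cdot V(m+n))$ uniformly in $m$, hence bounds the number and dimensions of nontrivial $\mathfrak g(n)$-constituents of $V(m+n)$; this forces the Dynkin indices of $\mathfrak g(n')\hookrightarrow\mathfrak g(n'+1)$ to be $1$ for large $n'$, and finitariness follows.

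So the missing idea is exactly a precise bound of this type. Your suggestion to track ``rank of a generic element on a natural module'' is in spirit close, but note that the paper's bound is on the opposite side: it controls how the \emph{natural module of the big algebra} branches over the \emph{small algebra}, not the rank of elements in the orbit. Your proposed route through a single $\xi\in\mathfrak g(\infty)^*$ exhibiting finite-rank action is both unnecessary and harder to make work, since a priori the $V_n$ need not contain a coherent point of bounded complexity; the paper bypasses this entirely by working with a single fixed $n$ and letting $m$ vary.
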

This theorem is one of the main results of our work~\cite{PP1}. The following proposition is a key step in the proof.

\begin{proposition}\label{P321} Let $\mathfrak g$ be a finite-dimensional classical simple Lie algebra, $V$ be a natural $\mathfrak g$-module, and $\mathfrak g'\subset\mathfrak g$ be a simple Lie subalgebra of $\mathfrak g$. If there exists an adjoint orbit $\EuScript O\subset \mathfrak g^*$ such that its image in $\mathfrak g'^*$ is not dense, then
$$\dim (\mathfrak g'\cdot V)<2(\dim\mathfrak g'-\mathrm{rk}\mathfrak g') (\mathrm{rk}\mathfrak g'+1)~\mathrm{or}~2\dim\mathfrak g'+2\ge\dim V,$$
where $\mathfrak g'\cdot V$ is the sum of non-trivial simple $\mathfrak g'$-submodules of $V$.
\end{proposition}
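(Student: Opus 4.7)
\medskip

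\noindent\textbf{Proof plan.} I would argue by contrapositive: assume
$$\dim(\mathfrak g'\cdot V)\ge 2(\dim\mathfrak g'-\mathrm{rk}\mathfrak g')(\mathrm{rk}\mathfrak g'+1)\quad\text{and}\quad\dim V>2\dim\mathfrak g'+2,$$
and show that every positive-dimensional adjoint orbit $\EuScript O\subset\mathfrak g^*$ has dense image in $\mathfrak g'^*$, contradicting the hypothesis. First, identify $\mathfrak g^*\cong\mathfrak g$ and $\mathfrak g'^*\cong\mathfrak g'$ via the trace form on the natural module $V$, so that the projection $\mathfrak g^*\to\mathfrak g'^*$ becomes the orthogonal projection $\pi:\mathfrak g\to\mathfrak g'$ along $\mathfrak m:=\mathfrak g'^{\perp}$. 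Decomposing $V=V^{\mathfrak g'}\oplus W$ with $W=\mathfrak g'\cdot V$, one has $\mathfrak g'\subset\mathfrak{gl}(W)\cap\mathfrak g$, and this block structure on $V$ induces a compatible block decomposition of $\mathfrak g$ respected by $\pi$.

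\medskip

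The key tool is an infinitesimal criterion for density. At $z\in\EuScript O$ the tangent space is $T_z\EuScript O=[\mathfrak g,z]$, so the image of $d\pi_z$ is $\pi([\mathfrak g,z])$; using invariance of the trace form one checks that $\pi([\mathfrak g,z])=\mathfrak g'$ precisely when the centralizer $\mathfrak z_{\mathfrak g}(z)$ meets $\mathfrak g'$ only in $0$. For $z$ regular semisimple this centralizer is a Cartan subalgebra $\mathfrak h_z$ of $\mathfrak g$, and a generic Cartan of $\mathfrak g$ meets $\mathfrak g'$ trivially as soon as $\dim\mathfrak g-\mathrm{rk}\mathfrak g\ge\dim\mathfrak g'$. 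Since for classical $\mathfrak g$ acting on its natural module $V$ the quantity $\dim\mathfrak g-\mathrm{rk}\mathfrak g$ grows quadratically in $\dim V$, the assumption $\dim V>2\dim\mathfrak g'+2$ is sufficient to produce regular semisimple orbits with dense image.

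\medskip

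The role of the bound $\dim(\mathfrak g'\cdot V)\ge 2(\dim\mathfrak g'-\mathrm{rk}\mathfrak g')(\mathrm{rk}\mathfrak g'+1)$ is to handle non-regular orbits, for which the infinitesimal criterion fails at the generic point of $\EuScript O$. One reads $\mathrm{rk}\mathfrak g'+1$ as a lower bound for the minimal non-trivial representation dimension of a simple Lie algebra of rank $\mathrm{rk}\mathfrak g'$ and $2(\dim\mathfrak g'-\mathrm{rk}\mathfrak g')$ as the dimension of the $\pm$-nilradical of $\mathfrak g'$; combined, the inequality forces $W$ to contain enough non-trivial $\mathfrak g'$-isotypic blocks that the conjugates of any fixed orbit element can vary their $\mathfrak g'$-components to cover a dense subset of $\mathfrak g'$. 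The main obstacle is to turn this heuristic into a rigorous count, uniformly across the three types $\mathfrak{sl}$, $\mathfrak o$, $\mathfrak{sp}$ of $\mathfrak g$ and across all simple subalgebras $\mathfrak g'\subset\mathfrak g$. I would expect to first reduce to the case $V^{\mathfrak g'}=0$ and then proceed by case analysis using Dynkin's classification of semisimple subalgebras of classical Lie algebras, with extra care in types $\mathfrak o$ and $\mathfrak{sp}$ where the invariant bilinear form on $V$ further constrains both the embedding $\mathfrak g\subset\mathfrak{gl}(V)$ and the projection $\pi$.
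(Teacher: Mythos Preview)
The paper does not actually prove this proposition: immediately after stating it, the authors write that ``the proof of Proposition~\ref{P321} is somewhat lengthy and we refer the reader directly to~\cite{PP1}.'' So there is no in-paper argument against which to compare your outline; I can only evaluate the plan on its own terms.

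Your contrapositive set-up has a structural flaw. The negation of ``there exists an adjoint orbit $\EuScript O$ whose image in $\mathfrak g'^*$ is not dense'' is ``\emph{every} adjoint orbit has dense image in $\mathfrak g'^*$,'' and this is never true: the zero orbit maps to $\{0\}$. You silently replace this by ``every \emph{positive-dimensional} orbit has dense image,'' which is no longer the contrapositive, and is in any case still too strong to hope for---small nilpotent orbits (e.g.\ the minimal one) will typically fail to project densely onto $\mathfrak g'^*$ regardless of the two numerical hypotheses you assume. So the statement you propose to prove is not the one needed, and is false in general. The proof in~\cite{PP1} presumably works in the forward direction: one starts from a \emph{given} orbit $\EuScript O$ with non-dense image (in the application this is an orbit contained in the proper closed set $\operatorname{Var}(J_n)$) and extracts constraints on $\mathfrak g'\subset\mathfrak g$ from the geometry of that particular orbit, rather than attempting to force density for all orbits simultaneously.

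Your infinitesimal criterion $\pi([\mathfrak g,z])=\mathfrak g'\Longleftrightarrow \mathfrak z_{\mathfrak g}(z)\cap\mathfrak g'=0$ is correct and useful, and the dimension-counting heuristic for regular semisimple $z$ is reasonable. But the paragraph on non-regular orbits is not an argument: reading $\mathrm{rk}\,\mathfrak g'+1$ as ``minimal representation dimension'' and $2(\dim\mathfrak g'-\mathrm{rk}\,\mathfrak g')$ as ``size of the nilradical'' is suggestive, yet you give no mechanism linking these numbers to density of a projection, and you yourself flag the uniform case analysis across classical types and across all simple $\mathfrak g'\subset\mathfrak g$ as ``the main obstacle'' without any indication of how it would go. That case analysis is exactly where the length of the actual proof in~\cite{PP1} resides.
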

The proof of Proposition~\ref{P321} is somewhat lengthy and we refer the reader directly to~\cite{PP1}.  Here we sketch the proof of the fact that Proposition~\ref{P321} implies Theorem~\ref{Tlinf}.
\begin{proof}[Sketch of proof of Theorem~\ref{Tlinf}] Denote by $G(n)$ the adjoint group of Lie algebra $\mathfrak g(n)$ for all $n\ge 1$. Let $J\subset\operatorname{\bf S}^\cdot(\mathfrak g(\infty))$ be a nonzero Poisson ideal of locally infinite codimension. Set $J_n:=J\cap\operatorname{\bf S}^\cdot(\mathfrak g(n))$ for any $n\ge1$.  Without loss of generality we can assume that $J$ is radical, as the radical of a Poisson ideal of locally infinite codimension in $\operatorname{\bf S}^\cdot(\mathfrak g(\infty))$ is again Poisson and of locally infinite codimension.

Fix $n$ so that $\operatorname{\bf S}^\cdot(\mathfrak g(n))\cap J$ is nonzero and of infinite codimension in $\operatorname{\bf S}^\cdot(\mathfrak g(n))$. Then the image of any $G(m+n)$-orbit under the morphism $\operatorname{Var}(J(m+n))\to \mathfrak g(n)^*$ is not dense in $\mathfrak g(n)^*$ since it lies in the proper closed subvariety $\operatorname{Var}(J(n))\subset\mathfrak g(n)^*$. Therefore Proposition~\ref{P321} implies that $\dim (\mathfrak g(n)\cdot V(m+n))$ is bounded by some function which depends on $n$ only. Hence the number of nontrivial simple $\mathfrak g(n)$-constituents in $V(m+n)$ and their dimensions are simultaneously bounded as $m$ grows.
This shows that the Dynkin index of the injections $\mathfrak g(n')\to\mathfrak g(n'+1)$ equals 1 for large enough $n'$, which implies that $\mathfrak g(\infty) $ is isomorphic to $\mathfrak{sl}(\infty), \mathfrak o(\infty)$ or $\mathfrak{sp}(\infty)$, see~\cite[proof of Theorem 3.1]{PP1}.
\end{proof}

\subsection{Associated pro-varieties of ideals in $\operatorname{U}(\mathfrak{sl}(\infty)), \operatorname{U}(\mathfrak o(\infty)), \operatorname{U}(\mathfrak{sp}(\infty)$)}
Fix now a Lie algebra $\mathfrak g(\infty)=\mathfrak{sl}(\infty), \mathfrak{o}(\infty)$, $\mathfrak{sp}(\infty)$ together with a chain~(\ref{Ech1}) such that $\lim\limits_{\to}\mathfrak g(n)=\mathfrak g(\infty)$. Without loss of generality we assume that for $n\ge 3$ all $\mathfrak g(n)$ are simple and of the same type $A, B, C$, or $D$, and that $\operatorname{rk}\mathfrak g(n)=n$. By $V(n)$ we denote a natural representation of $\mathfrak g(n)$ (for $\mathfrak g(n)$ of type A there are two choices for $V(n)$ up to isomorphism). We further assume that, for $n\ge 3$, $V(n+1)$ considered as a $\mathfrak g(n)$-module is isomorphic to $V(n)$ plus a trivial module.

Set
\begin{center}$\mathfrak g(n)^{\le r}:=\{X\in\mathfrak g(n)\mid$ there exists
$\lambda\in\mathbb F$ such that $\operatorname{rk} (X-\lambda$Id$_{V(n)})\le r\}$,
(2)\end{center}where $X$ is considered as a linear
operator on $V(n)$. Note that $\mathfrak g(n)^{\le r}$ is an algebraic
subvariety of $\mathfrak g(n)$ for a fixed $r$ and large enough $n$, see~\cite{PP1}. Choosing compatible identifications $\mathfrak
g_n\cong\mathfrak g_n^*$, we can assume that $\mathfrak g(n)^{\le
r}\subset\mathfrak g(n)^*$. Furthermore, for $\mathfrak
g(\infty)\cong\mathfrak{sl}(\infty), \mathfrak{o}(\infty), \mathfrak{sp}(\infty)$
one can check directly that the projection $\mathfrak g(n+1)^*\to\mathfrak
g(n)^*$ maps $\mathfrak g(n+1)^{\le r}$ surjectively to $\mathfrak
g(n)^{\le r}$. In this way we obtain a well-defined projective limit of algebraic
varieties $\mathfrak g(\infty)^{\le r}:=\varprojlim \mathfrak g(n)^{\le r}$.

The radical ideals $J_n^{\le r}$ of ${\bf S}^\cdot(\mathfrak g(n))$, with
respective zero-sets $\mathfrak g(n)^{\le r}\subset\mathfrak g_n^*$, form a
chain whose union we denote by $J^{\le r}$. The ideal $J^{\le r}$ is
a radical Poisson ideal of $\operatorname{\bf S}^\cdot(\mathfrak g(\infty))$. Moreover, the following result strengthens Theorem~\ref{Tlinf} by describing all radical Poison ideals in $\operatorname{\bf S}^\cdot(\mathfrak g(\infty))$.

\begin{theorem}[{~\cite[Theorem 3.3]{PP1}}]\label{Tlinfrk}Let $\mathfrak g(\infty)=\mathfrak{sl}(\infty), \mathfrak{o}(\infty), \mathfrak{sp}(\infty)$ and $J\subset\operatorname{\bf S}^\cdot(\mathfrak g(\infty))$ be a nonzero radical Poisson ideal. Then $J=J^{\le r}$ for some $r\in\mathbb Z_{\ge0}$.\end{theorem}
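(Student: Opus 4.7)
My plan is to analyze the chain of closed $G(n)$-invariant subvarieties $V_n := \operatorname{Var}(J_n) \subset \mathfrak g(n)^*$, with $J_n := J \cap \operatorname{\bf S}^\cdot(\mathfrak g(n))$, and to prove that for some $r \in \mathbb Z_{\ge 0}$ one has $V_n = \mathfrak g(n)^{\le r}$ for all sufficiently large $n$; this immediately yields $J = J^{\le r}$. Since $J$ is a radical Poisson ideal, each $J_n$ is radical and $\mathfrak g(n)$-stable, so each $V_n$ is closed and $G(n)$-invariant, and the equality $J_{n+1} \cap \operatorname{\bf S}^\cdot(\mathfrak g(n)) = J_n$ forces $\operatorname{pr}_n(V_{n+1})$ to be dense in $V_n$.

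First I would dispose of the case when $J$ has locally finite codimension. Each $V_n$ would then be a finite $G(n)$-invariant set, hence consist of $G(n)$-fixed points; since $\mathfrak g(n)$ is simple, $0$ is the only such point in $\mathfrak g(n)^*$, so $V_n = \{0\}$ for all $n$ and $J$ is the augmentation ideal $J^{\le 0}$. Assume henceforth that $J$ has locally infinite codimension. I would then apply Proposition~\ref{P321} with $\mathfrak g = \mathfrak g(n+m)$ and $\mathfrak g' = \mathfrak g(n)$: since $V_{n+m}$ projects into the proper subvariety $V_n \subsetneq \mathfrak g(n)^*$, no coadjoint $G(n+m)$-orbit meeting $V_{n+m}$ has dense image in $\mathfrak g(n)^*$, and the alternative $2\dim\mathfrak g(n) + 2 \ge \dim V(n+m)$ fails once $m$ is large. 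Hence $\dim(\mathfrak g(n) \cdot V(n+m))$ is bounded uniformly in $m$. Reading this through the explicit decomposition of $V(n+m)$ as a $\mathfrak g(n)$-module (a bounded nontrivial piece plus many trivial summands), I extract an integer $r$, depending only on $J$, such that $V_n \subseteq \mathfrak g(n)^{\le r}$ for all large $n$; uniformity in $n$ is enforced by density of $\operatorname{pr}_n(V_{n+1})$ in $V_n$, which would let a hypothetical higher-rank element downstairs lift upstairs and contradict the uniform bound.

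For the reverse inclusion, let $r$ be the minimal integer with $V_n \subseteq \mathfrak g(n)^{\le r}$ eventually. For arbitrarily large $n$, $V_n$ then meets $\mathfrak g(n)^{\le r} \setminus \mathfrak g(n)^{\le r-1}$, and by $G(n)$-invariance contains the closure of at least one ``genuinely rank-$r$'' coadjoint orbit. Combining this with density of the projections and with the much richer $G(n+m)$-orbit structure available upstairs, I aim to conclude that every irreducible component of $\mathfrak g(n)^{\le r}$ eventually appears in $V_n$. The key geometric input is the description of coadjoint orbits in classical Lie algebras by Jordan types (with suitable parity constraints for $\mathfrak o$ and $\mathfrak{sp}$), together with the behaviour of these Jordan types under restriction along $\mathfrak g(n) \hookrightarrow \mathfrak g(n+1)$.

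The main obstacle will be this last reconstruction step: ruling out, at the level of the inverse system, any $G(n)$-invariant closed subvariety strictly smaller than $\mathfrak g(n)^{\le r}$ that still meets $\mathfrak g(n)^{\le r} \setminus \mathfrak g(n)^{\le r-1}$. In other words, one must show that a single compatible rank-$r$ orbit in the tower $\{V_n\}$ forces $V_n$ to exhaust $\mathfrak g(n)^{\le r}$. This will require a careful analysis of how the various orbit closures of rank $\le r$ in $\mathfrak g(n+1)$ project onto orbit closures in $\mathfrak g(n)$, and an argument that, generically as $m$ grows, all rank-$r$ components of $\mathfrak g(n)^{\le r}$ arise as images of $G(n+m)$-orbits meeting $V_{n+m}$.
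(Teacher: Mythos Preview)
The paper does not contain a proof of this theorem: it is stated with the citation ``\cite[Theorem 3.3]{PP1}'' and no argument is given here. So there is nothing in this document to compare your proposal against; the proof lives entirely in the cited reference~\cite{PP1}.

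On the merits of your outline itself: the overall architecture is reasonable and matches the spirit of the surrounding discussion (indeed, your use of Proposition~\ref{P321} to bound the rank from above is exactly how the paper sketches the proof of the weaker Theorem~\ref{Tlinf}). You correctly identify the genuine difficulty, namely the reverse inclusion $V_n \supseteq \mathfrak g(n)^{\le r}$. Your sketch of this step is not yet a proof: knowing that $V_n$ meets the stratum of rank exactly $r$ for infinitely many $n$, together with density of the projections, does not by itself force $V_n$ to contain \emph{every} irreducible component of $\mathfrak g(n)^{\le r}$. One needs a concrete statement about how orbit closures in $\mathfrak g(n+m)^{\le r}$ project onto $\mathfrak g(n)^{\le r}$ --- roughly, that a generic rank-$r$ orbit upstairs dominates all of $\mathfrak g(n)^{\le r}$ downstairs for $m$ large --- and this is exactly the kind of classical-group computation carried out in~\cite{PP1}. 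Without that input your argument remains a plan rather than a proof.
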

\begin{corollary} Let $\mathfrak g(\infty)=\mathfrak{sl}(\infty), \mathfrak o(\infty), \mathfrak{sp}(\infty)$ and $I\subset\operatorname{U}(\mathfrak g(\infty))$ be an ideal. Then $\operatorname{Var}(I)=\mathfrak g(\infty)^{\le r}$ for some $r\in\mathbb Z_{\ge0}$.\end{corollary}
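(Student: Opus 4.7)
The plan is to reduce the statement to Theorem~\ref{Tlinfrk} applied to the radical of $\operatorname{gr} I$. First I would form the associated graded ideal $\operatorname{gr} I\subset\operatorname{\bf S}^\cdot(\mathfrak g(\infty))$. As recalled in Subsection~\ref{SSavpi}, this ideal is automatically Poisson, and because $\mathfrak g(\infty)$ is locally simple its radical $\operatorname{rad}(\operatorname{gr} I)$ is again a Poisson ideal. Since passing to the radical does not change the zero-locus, the pro-variety $\operatorname{Var}(I)$ coincides with the pro-variety of zeros of $\operatorname{rad}(\operatorname{gr} I)$ inside $\mathfrak g(\infty)^*=\varprojlim \mathfrak g(n)^*$.

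Next I would invoke Theorem~\ref{Tlinfrk}. Assuming $I$ is a proper nonzero ideal (so that $\operatorname{rad}(\operatorname{gr} I)$ is a nonzero radical Poisson ideal), that theorem forces $\operatorname{rad}(\operatorname{gr} I)=J^{\le r}$ for some $r\in\mathbb Z_{\ge 0}$. By the very definition of $J^{\le r}=\cup_n J_n^{\le r}$, its zero-locus in $\mathfrak g(n)^*$ equals $\mathfrak g(n)^{\le r}$ for every sufficiently large $n$; combined with the already established fact that the projection $\mathfrak g(n+1)^*\to\mathfrak g(n)^*$ sends $\mathfrak g(n+1)^{\le r}$ surjectively onto $\mathfrak g(n)^{\le r}$, this identifies the zero-locus of $J^{\le r}$ inside $\mathfrak g(\infty)^*$ with the projective limit $\varprojlim\mathfrak g(n)^{\le r}=\mathfrak g(\infty)^{\le r}$. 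Combining this with the previous paragraph yields $\operatorname{Var}(I)=\mathfrak g(\infty)^{\le r}$, as desired.

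I do not expect any genuine obstacle here, since the substantive content is entirely packaged in Theorem~\ref{Tlinfrk}. Only two routine points require attention. The first is the preservation of the Poisson property under taking the radical; this is settled at the finite level because each $\operatorname{\bf S}^\cdot(\mathfrak g(n))$ is noetherian and $\mathfrak g(n)$ is finite-dimensional, and it then passes to the union $\operatorname{\bf S}^\cdot(\mathfrak g(\infty))=\cup_n\operatorname{\bf S}^\cdot(\mathfrak g(n))$. The second is the compatibility between ``zero-locus in $\mathfrak g(\infty)^*$'' and ``projective limit of zero-loci in $\mathfrak g(n)^*$'', which is built into the definition of the associated pro-variety used throughout Subsection~\ref{SSavpi}. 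The boundary cases $I=0$ and $I=\operatorname{U}(\mathfrak g(\infty))$ are excluded by the implicit assumption that $I$ is a proper nonzero ideal, matching the hypothesis of Theorem~\ref{Tlinfrk}.
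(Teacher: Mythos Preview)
Your argument is correct and follows exactly the paper's own approach: apply Theorem~\ref{Tlinfrk} to $\operatorname{rad}(\operatorname{gr} I)$ to obtain $\operatorname{rad}(\operatorname{gr} I)=J^{\le r}$, and conclude $\operatorname{Var}(I)=\mathfrak g(\infty)^{\le r}$. The paper's proof is just the two-line version of what you wrote, and your additional remarks on the Poisson property of the radical, the projective-limit description of the zero locus, and the boundary cases are all accurate elaborations.
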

\begin{proof} By Theorem~\ref{Tlinfrk}, we have rad(gr$I)=J^{\le r }$ for some $r\in\mathbb Z_{\ge0}$. Hence $$\operatorname{Var}(I)=\mathfrak g(\infty)^{\le r}.$$\end{proof}
We say that an ideal $I\subset\operatorname{U}(\mathfrak g(\infty))$ has {\it rank} $r\in\mathbb Z_{\ge0}$ if $\operatorname{Var}(I)=\mathfrak g(\infty)^{\le r}$.



\section{Coherent local systems and integrable ideals}\label{Sclsint}
In this section we review the concept of c.l.s. (introduced by A.~Zhilinskii) and show how this concept is related to two-sided ideals of $\operatorname{U}(\mathfrak g(\infty))$.

We consider a fixed locally simple Lie algebra $\mathfrak g(\infty)=\varinjlim \mathfrak g(n)$, and denote by $\operatorname{Irr} \mathfrak g(n)$ the set of isomorphism classes of simple finite-dimensional $\mathfrak g(n)$-modules.
\begin{definition}{\it A coherent local system of $\mathfrak g(n)$-modules} (further shortened as {\it c.l.s.}) for $\mathfrak g(\infty)=\varinjlim\mathfrak g(n)$ is a collection of sets \begin{center}$\{Q_n\}_{n\in\mathbb Z_{\ge1}}\subset\prod_{n\in\mathbb Z_{\ge1}}\operatorname{Irr}\mathfrak g(n)$\end{center} such that for any $n<m$ the following conditions hold:

$\bullet$ for any simple finite-dimensional module $M$ whose isomorphism class belongs to $Q_m$, the isomorphism classes of all simple constituents of $M|_{\mathfrak g(n)}$ belong to $Q_n$,

$\bullet$ for any simple finite-dimensional $\mathfrak g(n)$-module $N$ whose isomorphism class belongs to $Q_n$, there exists a simple finite-dimensional $\mathfrak g(m)$-module $M$ whose isomorphism class belongs to $Q_m$ and such that $N$ is isomorphic to a simple constituent of $M|_{\mathfrak g(n)}$.
\end{definition}

The c.l.s. for locally simple Lie algebras are classified by A.~Zhilinskii~\cite{Zh3}. A remarkable corollary of this classification is that, if $\mathfrak g(\infty)$ has a non-trivial c.l.s., then $\mathfrak g(\infty)$ is diagonal. This fact had led Baranov to his conjecture, see Theorem~\ref{T32} above.

Note that the c.l.s. for a given Lie algebra form a lattice with respect to the inclusion order (join equals union and meet equals intersection).

\subsection{Integrable ideals}
C.l.s. for $\mathfrak g(\infty)$ are related in a natural way to a special class of ideals of $\operatorname{U}(\mathfrak g(\infty))$ which we describe next. 
\begin{definition}(1) A $\mathfrak g(\infty)$-module
$M$ is {\it integrable} if, for any finitely generated subalgebra
$U'\subset\operatorname{U}(\mathfrak g(\infty))$ and any $m\in M$, we have
$\dim(U'\cdot m)<\infty$.

(2) An ideal $I\subset\operatorname{U}(\mathfrak g(\infty))$ is {\it integrable} if it is the annihilator of an integrable $\operatorname{U}(\mathfrak g(\infty))$-module.\end{definition}
This definition makes sense also for a finite-dimensional semisimple Lie algebra $\mathfrak g$. In that case integrable ideals are the annihilators of  arbitrary sums of finite-dimensional $\mathfrak g$-modules, and form a very special class of ideals. In the case of $\mathfrak g(\infty)$ integrable ideals play a much more prominent role.

To a c.l.s. $Q$ for $\mathfrak g(\infty)$ we attach the ideal
$$\operatorname{I}(Q):=\cup_n\cap_{V\in Q_n}(\operatorname{Ann}_{\operatorname{U}(\mathfrak g(n))}V).$$

\begin{lemma} An ideal $I\subset\operatorname{U}(\mathfrak g(\infty))$ is integrable if and only if $I=\operatorname{I}(Q)$ for some c.l.s. $Q$.\end{lemma}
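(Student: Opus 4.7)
The plan is to prove both implications by relating a c.l.s.\ $Q$ and an integrable module $M$ through the collection of finite-dimensional simple constituents in each restriction $M|_{\mathfrak g(n)}$. The key observation is that any integrable $\mathfrak g(\infty)$-module, when restricted to any $\mathfrak g(n)$, is a direct sum of finite-dimensional simple $\mathfrak g(n)$-modules, because $\mathfrak g(n)$ is finite-dimensional semisimple and acts locally finitely; combined with the identity $\operatorname{Ann}_{\operatorname{U}(\mathfrak g(\infty))}M=\bigcup_n\operatorname{Ann}_{\operatorname{U}(\mathfrak g(n))}M$ and the standard fact that the annihilator of a direct sum is the intersection of annihilators, this reduces the equality of ideals to a statement about constituents.

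For the direction $(\Leftarrow)$, given a c.l.s.\ $Q$, I would construct an integrable $\mathfrak g(\infty)$-module $M$ with $\operatorname{Ann}_{\operatorname{U}(\mathfrak g(\infty))}M=\operatorname{I}(Q)$ as follows. For each $n$ and each $V\in Q_n$, use the second coherence property iteratively to pick simple $\mathfrak g(k)$-modules $W_k\in Q_k$ (for $k\geq n$, with $W_n=V$) together with $\mathfrak g(k-1)$-equivariant embeddings $W_{k-1}\hookrightarrow W_k$. Define $N(V):=\varinjlim_k W_k$; this carries a canonical $\mathfrak g(\infty)$-action and is integrable, since every vector lies in some finite-dimensional $W_k$. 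Finally set
$$M:=\bigoplus_{n\geq 1}\bigoplus_{V\in Q_n}N(V),$$
which is an integrable $\mathfrak g(\infty)$-module.

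To verify $\operatorname{Ann}_{\operatorname{U}(\mathfrak g(n))}M=\bigcap_{V\in Q_n}\operatorname{Ann}_{\operatorname{U}(\mathfrak g(n))}V$, one inclusion is immediate since each $V\in Q_n$ sits inside $M$. For the other inclusion, take $x$ annihilating every $V\in Q_n$; any summand $N(V')$ with $V'\in Q_{n'}$ is a union of finite-dimensional simple $\mathfrak g(k)$-modules $W_k$ for $k\geq\max(n,n')$, each of which restricts to $\mathfrak g(n)$ as a sum of simples whose classes lie in $Q_n$ by the first coherence property. Hence $x$ acts by zero on every $W_k$ for $k$ large enough, and by the submodule property on $W_k$ for smaller $k$. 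Taking unions over $n$ yields $\operatorname{Ann}_{\operatorname{U}(\mathfrak g(\infty))}M=\operatorname{I}(Q)$.

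For $(\Rightarrow)$, suppose $I=\operatorname{Ann}_{\operatorname{U}(\mathfrak g(\infty))}M$ for an integrable module $M$. Define $Q_n$ as the set of isomorphism classes of simple $\mathfrak g(n)$-modules appearing in $M|_{\mathfrak g(n)}$. The first coherence property is immediate, since a simple $\mathfrak g(m)$-constituent of $M|_{\mathfrak g(m)}$ is itself a submodule of $M$, and its $\mathfrak g(n)$-constituents appear in $M|_{\mathfrak g(n)}$. For the second coherence property, if $V\in Q_n$ with $0\neq v\in V\subset M$, then $\operatorname{U}(\mathfrak g(m))\cdot v$ is a finite-dimensional semisimple $\mathfrak g(m)$-submodule of $M$; its simple summands lie in $Q_m$, and $V\cong\operatorname{U}(\mathfrak g(n))\cdot v$ embeds into one of them by simplicity. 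Thus $Q=\{Q_n\}$ is a c.l.s., and the identity $\operatorname{Ann}_{\operatorname{U}(\mathfrak g(n))}M=\bigcap_{V\in Q_n}\operatorname{Ann}_{\operatorname{U}(\mathfrak g(n))}V$ together with $\operatorname{U}(\mathfrak g(\infty))=\bigcup_n\operatorname{U}(\mathfrak g(n))$ gives $I=\operatorname{I}(Q)$. The only mildly subtle step is the equality $\operatorname{Ann}_{\operatorname{U}(\mathfrak g(n))}M=\bigcap_{V\in Q_n}\operatorname{Ann}_{\operatorname{U}(\mathfrak g(n))}V$, which relies on the semisimplicity of $M|_{\mathfrak g(n)}$ noted in the opening paragraph.
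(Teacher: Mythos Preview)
Your proof is correct and follows essentially the same approach as the paper: in the forward direction you associate to an integrable module $M$ the c.l.s.\ of its restricted constituents (the paper writes this as $Q_M$ and simply declares the verification ``clear'', while you spell out both coherence conditions), and in the backward direction you realise a given c.l.s.\ as the constituent data of an explicitly constructed integrable module.

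The only difference worth noting is in the construction for $(\Leftarrow)$. The paper builds a \emph{single} direct limit: it sets $V_n$ to be the direct sum of one representative of each class in $Q_n$, observes that the c.l.s.\ axioms guarantee a $\mathfrak g(n)$-embedding $V_n\hookrightarrow V_{n+1}$, and takes $\varinjlim V_n$. You instead build, for every pair $(n,V)$ with $V\in Q_n$, an individual tower $N(V)$ and then take the outer direct sum $\bigoplus_{n,V}N(V)$. Both modules are integrable and both have annihilator $\operatorname{I}(Q)$; your version trades a slightly larger module for not having to check that a single embedding $V_n\hookrightarrow V_{n+1}$ exists when $Q_n$ is infinite. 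This is a cosmetic variation rather than a different strategy.
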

\begin{proof} If an ideal $I$ is integrable, it is the annihilator of some integrable $\mathfrak g(\infty)$-module $M$. It is clear that $M$ determines a c.l.s. $Q_M$,
$$(\operatorname{Q}_M)_n:=\{\mbox{isomorphism~classes~of~simple~direct~summands~of~}M|_{\mathfrak{g}(n)}\},$$
and that $I=\mathrm{I}(Q_M)$.

Conversely, let $I=\operatorname{I}(Q)$ for some c.l.s. $Q$ for $\mathfrak g(\infty)$. For any $n\ge1$, let $V_n$ be the direct sum of representatives of the isomorphism classes in $Q_n$. The definition of c.l.s. guarantees that for any $n\ge1$ there exists an embedding $V_n\to V_{n+1}$ of $\mathfrak g(n)$-modules. Clearly, the direct limit of such embeddings is an integrable $\mathfrak g(\infty)$-module, and $I$ is the annihilator of this integrable module.
\end{proof}
A c.l.s. is called {\it irreducible} if it is not a union of proper sub-c.l.s.. Any c.l.s. is a finite union of irreducible c.l.s.~\cite{Zh1, Zh3}. Moreover, the following holds.

\begin{proposition}\label{P44} a) If $Q$ is an irreducible c.l.s. then $\operatorname{I}(Q)$ is a primitive ideal.

b) \label{Lprpr} An integrable ideal $I$ of $\operatorname{U}(\mathfrak g(\infty))$ is prime if and only if it is primitive.
\end{proposition}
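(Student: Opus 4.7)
The plan is to deduce part (b) from part (a), and to prove (a) by constructing a suitable simple integrable $\mathfrak g(\infty)$-module.

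For part (b), the easy direction is general nonsense: the annihilator of any simple module over an associative ring is prime, so primitive implies prime. For the nontrivial direction, let $I$ be an integrable prime ideal. By Lemma~4.3 write $I=\operatorname{I}(Q)$ for some c.l.s.\ $Q$, and decompose $Q=Q^1\cup\dots\cup Q^k$ as a finite union of irreducible sub-c.l.s., invoking Zhilinskii~\cite{Zh1,Zh3}. The first step is the identity
$$\operatorname{I}(Q)=\bigcap_{i=1}^k\operatorname{I}(Q^i).$$
The inclusion $\subseteq$ is immediate from $Q^i\subseteq Q$. For $\supseteq$, given $u\in\bigcap_i\operatorname{I}(Q^i)$ pick $n_i$ with $u\in\operatorname{U}(\mathfrak g(n_i))$ annihilating every $V\in Q^i_{n_i}$, and set $n=\max_i n_i$; the second axiom of a c.l.s.\ upgrades this to the statement that $u$ annihilates every $V\in Q^i_n$, and $Q_n=\bigcup_i Q^i_n$ then gives $u\in\operatorname{I}(Q)$. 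From $\operatorname{I}(Q^1)\cdots\operatorname{I}(Q^k)\subseteq\bigcap_i\operatorname{I}(Q^i)=\operatorname{I}(Q)$ and primality we obtain $\operatorname{I}(Q^j)\subseteq\operatorname{I}(Q)$ for some $j$, and the reverse containment follows from $Q^j\subseteq Q$. Thus $\operatorname{I}(Q)=\operatorname{I}(Q^j)$, which is primitive by part (a).

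For part (a), the plan is to exhibit an explicit simple integrable $\mathfrak g(\infty)$-module $N_Q$ with $\operatorname{Ann}_{\operatorname{U}(\mathfrak g(\infty))}N_Q=\operatorname{I}(Q)$. I would proceed as in the proof of Lemma~4.3 but choose, level by level, a single simple representative $V_n\in Q_n$ together with an embedding $V_n\hookrightarrow V_{n+1}$ of $\mathfrak g(n)$-modules, and set $N_Q:=\varinjlim V_n$. Simplicity of $N_Q$ is then automatic: any nonzero $v\in N_Q$ lies in some $V_n$, where $\operatorname{U}(\mathfrak g(n))\cdot v=V_n$ by simplicity of $V_n$, and letting $n$ vary forces $\operatorname{U}(\mathfrak g(\infty))\cdot v=N_Q$. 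Integrability is clear from the construction, and the containment $\operatorname{Ann} N_Q\supseteq\operatorname{I}(Q)$ is built in.

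The main obstacle is the reverse containment $\operatorname{Ann} N_Q\subseteq\operatorname{I}(Q)$: a priori, $\operatorname{Ann} N_Q$ records only the chosen chain $\{V_n\}$, whereas $\operatorname{I}(Q)$ records the intersection over all isomorphism classes in $Q_n$. This is exactly where irreducibility of $Q$ must be used, for otherwise a non-trivial decomposition $Q=Q^1\cup Q^2$ would produce a chain $\{V_n\}$ inside some $Q^j$ and the annihilator of $N_Q$ would coincide with $\operatorname{I}(Q^j)\supsetneq\operatorname{I}(Q)$. To overcome this, one exploits Zhilinskii's classification~\cite{Zh3}, which presents each irreducible c.l.s.\ for $\mathfrak g(\infty)=\mathfrak{sl}(\infty),\mathfrak o(\infty),\mathfrak{sp}(\infty)$ as an explicit parametric family of highest weights, and then chooses $V_n$ to be a \emph{generic} member of $Q_n$ in that parameter space. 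The finite-dimensional fact that the annihilator at a generic point of an irreducible family of simple modules coincides with the intersection of annihilators over the whole family then yields $\operatorname{Ann}_{\operatorname{U}(\mathfrak g(n))}V_n=\bigcap_{V\in Q_n}\operatorname{Ann}_{\operatorname{U}(\mathfrak g(n))}V$ for each $n$, and passing to the union over $n$ gives $\operatorname{Ann} N_Q=\operatorname{I}(Q)$. Verifying the genericity step and the compatibility of generic choices across levels (so that the embeddings $V_n\hookrightarrow V_{n+1}$ exist) is the most delicate part of the argument.
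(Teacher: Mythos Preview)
Your argument for part~(b) is correct and is essentially the natural reduction to part~(a); the paper itself simply cites \cite[Proposition~7.8]{PP1} for this.

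For part~(a), however, the key step is flawed. You claim that for a suitably ``generic'' choice of $V_n\in Q_n$ one has
\[
\operatorname{Ann}_{\operatorname{U}(\mathfrak g(n))}V_n=\bigcap_{V\in Q_n}\operatorname{Ann}_{\operatorname{U}(\mathfrak g(n))}V.
\]
This equality is impossible whenever $|Q_n|>1$: for a semisimple finite-dimensional $\mathfrak g(n)$, the map $V\mapsto\operatorname{Ann}_{\operatorname{U}(\mathfrak g(n))}V$ is injective on isomorphism classes of simple finite-dimensional modules, and each such annihilator is a maximal ideal of finite codimension. Hence the intersection over $Q_n$ is strictly contained in every individual $\operatorname{Ann}\,V_n$. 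Concretely, for the irreducible c.l.s.\ $\operatorname{\bf S}^\cdot$ one has $Q_n=\{\operatorname{\bf S}^p(V(n)):p\ge 0\}$, and the choice $V_n=\operatorname{\bf S}^0(V(n))=\mathbb F$ produces $N_Q=\mathbb F$ with annihilator the augmentation ideal, which is not $\operatorname{I}(\operatorname{\bf S}^\cdot)$. There is no notion of ``generic point'' in this discrete family that rescues the argument.

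The correct mechanism is not genericity at a fixed level but \emph{growth across levels}: one must choose $V_n\in Q_n$ so that, for each fixed $n$, the simple $\mathfrak g(n)$-constituents of $V_m|_{\mathfrak g(n)}$ eventually exhaust $Q_n$ as $m\to\infty$. Then $\operatorname{Ann}_{\operatorname{U}(\mathfrak g(n))}N_Q=\bigcap_{m\ge n}\operatorname{Ann}_{\operatorname{U}(\mathfrak g(n))}(V_m|_{\mathfrak g(n)})$ equals the desired intersection. Irreducibility of $Q$ is what guarantees such a cofinal chain exists; this is the content of Zhilinskii's \cite[Lemma~1.1.2]{Zh1}, which the paper cites directly rather than reproving.
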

\begin{proof} a) This result should be attributed to A.~Zhilinskii as it follows from~\cite[Lemma 1.1.2]{Zh1}.

b) This is proved in~\cite[Proposition 7.8]{PP1}.
\end{proof}
Next, to any ideal $I$ of $\operatorname{U}(\mathfrak g(\infty))$ we attach the c.l.s. $\operatorname{Q}(I)$ which is the largest c.l.s. such that $I\subset \operatorname{I}(\operatorname{Q}(I))$. The maps \begin{equation}Q\mapsto \operatorname{I}(Q)\hspace{10pt}\mbox{and}\hspace{10pt}I\mapsto \operatorname{Q}(I)\label{Eiqi}\end{equation} are not injective in general but they induce antiisomorphisms between interesting sublattices of the lattice of c.l.s. and of the lattice of integrable ideals.
\begin{proposition}\label{P44C}The maps~(\ref{Eiqi}) induce antiisomorphisms between the following lattices:

a) the lattice of c.l.s. of {\it finite type} (i.e., c.l.s. $Q$ such that all sets $Q_n$ are finite) and the lattice of ideals of $\operatorname{U}(\mathfrak g(\infty))$ of locally finite codimension, for any locally simple Lie algebra $\mathfrak g(\infty)$,

b) the lattice of c.l.s. and the lattice of ideals in $\operatorname{U}(\mathfrak g(\infty))$, for $\mathfrak g(\infty)$ diagonal and nonisomorphic to $\mathfrak{sl}(\infty), \mathfrak o(\infty), \mathfrak{sp}(\infty)$,

c) the lattice of c.l.s. and the lattice of integrable ideals in $\operatorname{U}(\mathfrak g(\infty))$, for $\mathfrak g(\infty)=\mathfrak o(\infty), \mathfrak{sp}(\infty)$.

\end{proposition}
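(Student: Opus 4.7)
The maps $Q\mapsto \operatorname{I}(Q)$ and $I\mapsto \operatorname{Q}(I)$ are evidently order-reversing, and by construction one has $Q \subseteq \operatorname{Q}(\operatorname{I}(Q))$ and $I \subseteq \operatorname{I}(\operatorname{Q}(I))$ for all $Q$ and $I$. So in each of (a), (b), (c) it suffices to establish the reverse inclusions on the respective sublattices.

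For (a), the key input is Wedderburn--Artin applied section by section. If $I$ has locally finite codimension, the quotient $\widetilde{U}_n := \operatorname{U}(\mathfrak g(n))/(I\cap\operatorname{U}(\mathfrak g(n)))$ is a finite-dimensional algebra; complete reducibility of finite-dimensional $\mathfrak g(n)$-modules then forces $I\cap\operatorname{U}(\mathfrak g(n))$ to equal the intersection of the annihilators of the finitely many simple $\widetilde{U}_n$-modules, and these isomorphism classes, for varying $n$, assemble into a finite-type c.l.s.\ that coincides with $\operatorname{Q}(I)$ (the c.l.s.\ axioms follow from the compatibility of the tower $\widetilde{U}_n\hookrightarrow \widetilde{U}_m$ induced by the inclusions $\mathfrak g(n)\subset \mathfrak g(m)$). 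Hence $\operatorname{I}(\operatorname{Q}(I))\cap\operatorname{U}(\mathfrak g(n)) = I\cap \operatorname{U}(\mathfrak g(n))$ for every $n$, proving $\operatorname{I}(\operatorname{Q}(I)) = I$. Conversely, for a finite-type c.l.s.\ $Q$ the ideal $\operatorname{I}(Q)$ has locally finite codimension (it is a finite intersection of cofinite primitive ideals in each $\operatorname{U}(\mathfrak g(n))$), and the same Wedderburn--Artin argument yields $\operatorname{Q}(\operatorname{I}(Q)) = Q$.

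Part (b) reduces to (a): by Zhilinskii's theorem from \cite{Zh2} (stated above Subsection \ref{SSavpi}), every nonzero ideal of $\operatorname{U}(\mathfrak g(\infty))$ has locally finite codimension when $\mathfrak g(\infty)$ is diagonal and nonisomorphic to the three finitary algebras; correspondingly, Zhilinskii's classification \cite{Zh1, Zh3} forces all c.l.s.\ on such $\mathfrak g(\infty)$ to be of finite type. Thus the hypotheses of (b) coincide with those of (a) in this setting.

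For (c), the nontrivial direction is $\operatorname{I}(\operatorname{Q}(I))\subseteq I$ for integrable $I$ in $\operatorname{U}(\mathfrak o(\infty))$ or $\operatorname{U}(\mathfrak{sp}(\infty))$. The plan is to decompose every c.l.s.\ as a finite union of irreducible c.l.s.\ \cite{Zh1, Zh3}, so that $\operatorname{I}(\operatorname{Q}(I))$ becomes a finite intersection of primitive integrable ideals $\operatorname{I}(Q_i)$ via Proposition \ref{P44}(a). On the other side, since $I$ is integrable and prime integrable ideals are primitive by Proposition \ref{P44}(b), one expresses $I$ itself as a finite intersection of primitive integrable ideals, using the classification of integrable primitive ideals of $\operatorname{U}(\mathfrak g(\infty))$ in terms of irreducible c.l.s.\ established in \cite{PP1}. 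Matching the two decompositions termwise yields the equality $\operatorname{I}(\operatorname{Q}(I)) = I$. The main obstacle is precisely this matching: one must show that distinct irreducible c.l.s.\ on $\mathfrak o(\infty)$ or $\mathfrak{sp}(\infty)$ produce distinct primitive integrable ideals, i.e.\ that $Q\mapsto \operatorname{I}(Q)$ is injective on irreducible c.l.s. This is where the self-duality of the natural module for $\mathfrak o(\infty)$ and $\mathfrak{sp}(\infty)$ enters critically, and it is also the step that fails or requires substantial modification for $\mathfrak{sl}(\infty)$, explaining why (c) is not stated in that case.
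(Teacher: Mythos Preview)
Your treatment of part (a) matches the paper's: both rest on the classical bijection between finite-codimension ideals in $\operatorname{U}(\mathfrak g)$ and finite sets of simple finite-dimensional $\mathfrak g$-modules, and you simply unpack this via Wedderburn--Artin.

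In part (b) your reduction to (a) is correct, but the attribution is off. The statement ``every nonzero ideal of $\operatorname{U}(\mathfrak g(\infty))$ has locally finite codimension when $\mathfrak g(\infty)$ is diagonal and not finitary'' is the contrapositive of Theorem~\ref{T31} (a result of \cite{PP1}), not of Zhilinskii's Theorem from \cite{Zh2}. Zhilinskii's result goes the other way: existence of a locally-finite-codimension ideal implies diagonality. The paper explicitly invokes Theorem~\ref{T31} here.

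For part (c) the paper gives no self-contained argument either: it simply points to \cite[Theorem~7.9(b)]{PP1}. Your sketch heads in the right direction and correctly isolates the crux---injectivity of $Q\mapsto \operatorname{I}(Q)$ on irreducible c.l.s., which is exactly where $\mathfrak o(\infty)$ and $\mathfrak{sp}(\infty)$ differ from $\mathfrak{sl}(\infty)$. But two things are missing. First, you only argue toward $\operatorname{I}(\operatorname{Q}(I))\subseteq I$ and never return to $\operatorname{Q}(\operatorname{I}(Q))\subseteq Q$, which is also needed for an antiisomorphism. Second, the ``matching termwise'' step does not follow from the ingredients you list: knowing that $I$ and $\operatorname{I}(\operatorname{Q}(I))$ are each finite intersections of primitive integrable ideals, and that irreducible c.l.s.\ biject with primitive integrable ideals, does not by itself force the two intersections to coincide---one needs either uniqueness of the irreducible decomposition together with a check that $\operatorname{I}$ respects unions/intersections, or a direct proof that $Q\mapsto \operatorname{I}(Q)$ is injective on the full c.l.s.\ lattice. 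That is precisely what \cite[Theorem~7.9(b)]{PP1} establishes, and you should cite it rather than gesture at self-duality.
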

\begin{proof} Part a) is an easy corollary of the following well-known fact: for any semisimple finite-dimensional Lie algebra $\mathfrak g$, there is a natural bijection between the lattice of ideals of finite codimension in $\operatorname{U}(\mathfrak g)$ and the lattice of finite sets of isomorphism classes of finite-dimensional $\mathfrak g$-modules.

Part b) is implied by part a) and the followings two facts:

$\bullet$ according to~\cite{Zh3}, if $\mathfrak g(\infty)$ is diagonal and $\mathfrak g(\infty)\not\cong\mathfrak{sl}(\infty), \mathfrak o(\infty), \mathfrak{sp}(\infty),$ then any c.l.s. of $\mathfrak g(\infty)$ is of finite type,

$\bullet$ under the same assumptions, any ideal of $\operatorname{U}(\mathfrak g(\infty))$ is of locally finite codimension, see Theorem~\ref{T31}.

Part c) is a restatement of \cite[Theorem~7.9b)]{PP1}, see the proof there.
\end{proof}
\begin{remark}\label{R45} For integrable ideals, the map $I\mapsto \operatorname{Q}(I)$ is always injective. For $\mathfrak{sl}(\infty)$, the map $I\mapsto \operatorname{Q}(I)$ is not bijective. Theorem 7.9 a) in~\cite{PP1} describes a set of irreducible c.l.s., called {\it left c.l.s.}, such that the map $Q\mapsto \operatorname{I}(Q)$ induces a bijection between left c.l.s. for $\mathfrak{sl}(\infty)$ and integrable ideals of $\operatorname{U}(\mathfrak{sl}(\infty))$. However, it is easy to see that
this bijection cannot be extended to an antiisomorphism of lattices, see~\cite{PP1}. We skip the definition of left c.l.s. in this paper, and refer the reader to~\cite{PP1}.\end{remark}
\begin{remark}It seems plausible that all ideals of $\operatorname{U}(\mathfrak{sl}(\infty))$ and $\operatorname{U}(\mathfrak o(\infty))$ are integrable. If this is so, then $\operatorname{U}(\mathfrak{sl}(\infty))$ and $\operatorname{U}(\mathfrak{o}(\infty))$ will have countable many ideals. In addition, $\operatorname{U}(\mathfrak{sp}(\infty))$ will also have countable many ideals by Theorem~\ref{Tilosp} below.\end{remark}
\subsection{Classification of prime integrable ideals for finitary Lie algebras} In the rest of this paper $\mathfrak g(\infty)=\mathfrak{sl}(\infty), \mathfrak o(\infty)$ or $\mathfrak{sp}(\infty)$. Any c.l.s. is a union of finitely many irreducible c.l.s., and thus any integrable ideal is an intersection of finitely many primitive or, equivalently, prime integrable ideals. Therefore, a description of prime integrable ideals is a basis for a description of all integrable ideals. In this subsection we assume that $\mathfrak g(\infty)=\mathfrak{sl}(\infty), \mathfrak o(\infty)$ or $\mathfrak{sp}(\infty)$, and describe the prime integrable ideals of $\operatorname{U}(\mathfrak g(\infty))$ as annihilators of certain integrable $\mathfrak g(\infty)$-modules.

We define a {\it natural} $\mathfrak g(\infty)$-module $V(\infty)$ as a direct limit $\varinjlim_{n} V(n)$ of natural $\mathfrak g(n)$-modules. Such a limit is unique (up to isomorphism) for $\mathfrak g(\infty)=\mathfrak{o}(\infty), \mathfrak{sp}(\infty)$, while for $\mathfrak g(\infty)=\mathfrak{sl}(\infty)$ (up to isomorphism) there are two natural modules: $V(\infty)$ and $V(\infty)_*$. These are twists of each other by the Cartan involution of $\mathfrak{sl}(\infty)$.  
We set also\begin{equation}\begin{array}{cccc}{\bf \Lambda}^p:={\bf \Lambda}^p(V(\infty)),&\operatorname{\bf S}^p:=\operatorname{\bf S}^p(V(\infty)),&{\bf \Lambda}^\cdot:={\bf \Lambda}^\cdot(V(\infty)),&\operatorname{\bf S}^\cdot:=\operatorname{\bf S}^\cdot(V(\infty)),\\
{\bf \Lambda}^p_*:={\bf \Lambda}^p(V_*(\infty)),&\operatorname{\bf S}^p_*:=\operatorname{\bf S}^p(V_*(\infty)),&{\bf \Lambda}^\cdot_*:={\bf \Lambda}^\cdot(V_*(\infty)),&\operatorname{\bf S}^\cdot_*:=\operatorname{\bf S}^\cdot(V_*(\infty)),\\\end{array}\label{Ebcls}\end{equation}
where $p\in\mathbb Z_{\ge0}$, and $\Lambda^\cdot$ stands for exterior algebra. In addition, for $\mathfrak g(\infty)=\mathfrak{o}(\infty)$ we let $\operatorname{Spin}$ to be a fixed simple $\mathfrak{o}(\infty)$-module which is an inductive limit of simple spinor modules of $\mathfrak o(2n+1)$ for $n\to\infty$. Such a module is not unique up to isomorphism.

Zhilinskii has introduced the notion of {\it basic irreducible c.l.s.}: in our language these are the c.l.s. of the modules~(\ref{Ebcls}) and the c.l.s. of the $\mathfrak o(\infty)$-module $\operatorname{Spin}$. Zhilinskii proves that any irreducible c.l.s. can be represented canonically in terms of a certain product of basic c.l.s., which he calls Cartan product~\cite{Zh1}. The notion of Cartan product and Zhilinskii's decomposition of an arbitrary c.l.s. are recalled in our paper~\cite[Section 7.1-7.2]{PP1}.

Furthermore, for any Young diagram $Y$ (possibly empty) whose column lengths form a sequence  $l_1\ge l_2\ge...\ge l_s>0$, we define the $\frak g(\infty)$-module $V^Y$ as the direct limit $\varinjlim_{n\ge s} V^Y(n)$ where $V^Y(n)$ denotes the simple finite-dimensional $\frak g(n)$-module with highest weight $l_1\varepsilon_1+...+l_s\varepsilon_s$~(the vectors $\varepsilon_1,..., \varepsilon_s$ are introduced in Appendix~A; for $Y=\emptyset$ the highest weight of $V^Y(n)$ equals 0). The $\frak g(n)$-module $V^Y(n)$ is isomorphic to a simple direct summand of the tensor product $$\operatorname{\bf S}^{l_1}(V(n))\otimes\operatorname{\bf S}^{l_2}(V(n))\otimes....\otimes\operatorname{\bf S}^{l_s}(V(n)),$$ and the above direct limit is clearly well defined up to isomorphism. Similarly, for $\frak g(\infty)=\frak{sl}(\infty)$, we define $V^Y_*$ as the direct limit $\varinjlim_{n\ge s}(V^Y(n))^*$.

The following classification of prime integrable ideals is closely related to Zhilinskii's classification of irreducible c.l.s. (the classification of all integrable ideals is a little bit more involved, see~\cite[Theorem~7.9]{PP1}).
\begin{proposition}\label{Pclsm}a) Any nonzero prime integrable ideal $I\subsetneq \operatorname{U}(\mathfrak g(\infty))$ is the annihilator of a unique $\frak g(\infty)$-module of the form
$$\begin{array}{cc}V^{Y_l}\otimes({\bf \Lambda}^\cdot)^{\otimes v}\otimes(\operatorname{\bf S}^\cdot)^{\otimes w}\otimes V^{Y_r}_*&\mbox{for $\mathfrak g(\infty)=\mathfrak{sl}(\infty)$},\\
V^{Y_l}\otimes ({\bf \Lambda}^\cdot)^{\otimes v}\otimes(\operatorname{\bf S}^\cdot)^{\otimes w}&\mbox{for $\mathfrak g(\infty)=\mathfrak{sp}(\infty)$},\\
\begin{array}{c}V^{Y_l}\otimes ({\bf \Lambda}^\cdot)^{\otimes v}\otimes(\operatorname{\bf S}^\cdot)^{\otimes w}\\\mbox{or}\\V^{Y_l}\otimes({\bf \Lambda}^\cdot)^{\otimes v}\otimes(\operatorname{\bf S}^\cdot)^{\otimes w}\otimes\operatorname{Spin}\end{array}&\mbox{for $\mathfrak g(\infty)=\mathfrak o(\infty)$},\\
\end{array}$$
where $v, w\in\mathbb Z_{\ge0},$ and $Y_l, Y_r$ are arbitrary Young diagrams.\\
b) If $I$ is the annihilator of the respective module in a), then the rank of $I$ equals $w$ for $\mathfrak g(\infty)=\mathfrak{sl}(\infty)$, and $2w$ for $\mathfrak g(\infty)=\mathfrak o(\infty), \mathfrak{sp}(\infty)$.
\end{proposition}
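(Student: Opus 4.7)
The plan is to deduce Proposition~\ref{Pclsm} from Zhilinskii's classification of irreducible c.l.s.\ via the correspondence with prime integrable ideals established in Proposition~\ref{P44}. By Proposition~\ref{P44}(b), a prime integrable ideal $I$ is automatically primitive, and by Remark~\ref{R45} the assignment $I\mapsto\operatorname{Q}(I)$ is injective on integrable ideals. In the reverse direction, Proposition~\ref{P44}(a) says that every irreducible c.l.s.\ $Q$ yields a primitive (hence prime integrable) ideal $\operatorname{I}(Q)$. So the task reduces to matching each irreducible c.l.s.\ canonically with a $\mathfrak g(\infty)$-module of the claimed form having that same c.l.s.; uniqueness in (a) will then follow from the uniqueness of Zhilinskii's canonical decomposition combined with the injectivity of $I\mapsto\operatorname{Q}(I)$.

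The key input for the matching is Zhilinskii's theorem that every irreducible c.l.s.\ decomposes canonically as a Cartan product of basic irreducible c.l.s., the basic ones being precisely the c.l.s.\ of the modules in~(\ref{Ebcls}) together with the c.l.s.\ of $\operatorname{Spin}$ when $\mathfrak g(\infty)=\mathfrak o(\infty)$. I would then apply the translation that the Cartan product of the c.l.s.\ of two integrable modules $M,N$ coincides with the c.l.s.\ of a simple $\mathfrak g(\infty)$-summand of $M\otimes N$ having the same annihilator as $M\otimes N$. Iterating, Zhilinskii's canonical Cartan decomposition corresponds at the module level to a tensor product of basic modules. The finite-degree wedge factors ${\bf \Lambda}^{l_1},\ldots,{\bf \Lambda}^{l_s}$ with $l_1\ge\cdots\ge l_s>0$ then combine into a single simple tensor module $V^{Y_l}$ with column heights $l_i$ (respectively $V^{Y_r}_*$ in the $\mathfrak{sl}$-case), while the countably-supported factors ${\bf \Lambda}^\cdot$ and $\operatorname{\bf S}^\cdot$ survive with multiplicities $v,w\in\mathbb Z_{\ge 0}$. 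In the $\mathfrak o(\infty)$ case the factor $\operatorname{Spin}$ appears at most once, because $\operatorname{Spin}\otimes\operatorname{Spin}$ is absorbed into the tensor-module and wedge-algebra factors already accounted for.

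For part (b) I would compute $\operatorname{Var}(\operatorname{I}(Q))$ directly from the tensor decomposition using the corollary to Theorem~\ref{Tlinfrk}. The tensor modules $V^{Y_l}, V^{Y_r}_*$ have $\mathfrak g(n)$-restrictions that are finite-dimensional and hence yield rank-$0$ associated varieties in the limit, and the same holds for ${\bf \Lambda}^\cdot$ (each $\mathfrak g(n)$-constituent is finite-dimensional) and for $\operatorname{Spin}$. Only each $\operatorname{\bf S}^\cdot$ factor contributes: the annihilator of $\operatorname{\bf S}^\cdot(V(n))$ in $\operatorname{U}(\mathfrak g(n))$ has associated variety equal to the closure of the rank-$1$ matrix locus for $\mathfrak g(n)$ of type $A$, and of the rank-$2$ matrix locus for types $B,C,D$ (via the Howe dual pair interpretation of $\operatorname{\bf S}^\cdot(V(n))$). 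Additivity of rank under tensor products, compatible with the projective limits defining $\mathfrak g(\infty)^{\le r}$, then gives $\operatorname{rk} I=w$ for $\mathfrak{sl}(\infty)$ and $\operatorname{rk} I=2w$ for $\mathfrak o(\infty),\mathfrak{sp}(\infty)$.

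The main obstacle will be the Cartan-product-versus-tensor-product translation, because a priori the Cartan product of two c.l.s.\ records only the ``top'' component of the tensor product and the annihilator of the full tensor product could be strictly smaller. For $\mathfrak o(\infty),\mathfrak{sp}(\infty)$ this is controlled by the antiisomorphism of Proposition~\ref{P44C}(c); for $\mathfrak{sl}(\infty)$ the non-injectivity of $Q\mapsto\operatorname{I}(Q)$ (Remark~\ref{R45}) forces one to pass to the canonical ``left c.l.s.''\ representatives, and then to verify that Zhilinskii's canonical decomposition of a left c.l.s.\ reproduces precisely the tensor factors listed in the proposition.
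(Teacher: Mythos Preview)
Your approach to part (a) is essentially the paper's: reduce to Zhilinskii's classification of irreducible c.l.s.\ and invoke Proposition~\ref{P44C}(c) and Remark~\ref{R45} to pass between irreducible c.l.s.\ and prime integrable ideals. The one difference is tactical. You propose to start from Zhilinskii's Cartan-product decomposition and translate it into a tensor product of modules, which forces you to confront the ``Cartan product versus tensor product'' obstacle you correctly flag. The paper goes the other direction: it computes the c.l.s.\ of each module in the list directly (citing~\cite[Theorem~2.3]{PSt}) and observes that these exhaust Zhilinskii's list of irreducible (resp.\ left) c.l.s. This sidesteps your obstacle entirely, since no passage from Cartan product to annihilator of a tensor product is needed. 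A minor slip: you write ``wedge factors ${\bf \Lambda}^{l_1},\ldots$ combine into $V^{Y_l}$ with column heights $l_i$,'' but in the paper's conventions $V^Y$ is the top constituent of $\operatorname{\bf S}^{l_1}\otimes\cdots\otimes\operatorname{\bf S}^{l_s}$ where the $l_i$ are column lengths; the finite-degree $\operatorname{\bf S}^p$ (not ${\bf \Lambda}^p$) factors are what assemble into $V^{Y}$.

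For part (b) there is a genuine gap. Your argument rests on two assertions you do not justify: that the annihilator of $\operatorname{\bf S}^\cdot$ has rank exactly $1$ (type $A$) or $2$ (types $B,C,D$), and that rank is additive under tensor products. Neither is obvious. Additivity in particular is delicate: the associated variety of $\operatorname{Ann}(M\otimes N)$ is controlled by the image of $\operatorname{Var}(\operatorname{Ann} M)\times\operatorname{Var}(\operatorname{Ann} N)$ under the addition map $\mathfrak g^*\times\mathfrak g^*\to\mathfrak g^*$, and showing this lands in $\mathfrak g(\infty)^{\le r_1+r_2}$ but not in $\mathfrak g(\infty)^{\le r_1+r_2-1}$ requires an argument. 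The paper does not attempt this directly; it simply invokes~\cite[Section~7, formula~(9)]{PP1}, where the rank is read off from Zhilinskii's combinatorial data for the c.l.s.
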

\begin{proof} As the c.l.s. of the modules in the statement of Proposition~\ref{Pclsm} can be computed explicitly, see~\cite[Theorem~2.3]{PSt}, it is relatively straightforward to compare the statement of Proposition~\ref{Pclsm} with Zhilinskii's description of irreducible c.l.s.~\cite{Zh1}. This, together with Proposition~\ref{P44C}c) and Remark~\ref{R45}, implies a).

Part b) follows from~\cite[Section 7, formula (9)]{PP1}.\end{proof}
\subsection{$(\operatorname{\bf S}-{\bf \Lambda})$--involution and $\mathfrak{osp}$-duality} In the paper~\cite{DPS} (and independently in~\cite{SSn}) a category $\mathbb T_{\mathfrak g(\infty)}$ of tensor modules has been introduced for $\mathfrak g(\infty)=\mathfrak{sl}(\infty), \mathfrak o(\infty), \mathfrak{sp}(\infty)$. This category is analogous to the category of finite-dimensional modules over a finite-dimensional Lie algebra, and is proven to be Koszul but not semisimple.  Moreover, there is an equivalence of the tensor categories $\mathbb T_{\mathfrak{o}(\infty)}$ and $\mathbb T_{\mathfrak{sp}(\infty)}$~\cite{DPS, SSn, S}, and we refer to this equivalence as $\mathfrak{osp}$-duality. This duality identifies the natural modules $V(\infty)$ for both Lie algebras but sends the symmetric powers $\operatorname{\bf S}^k(V(\infty))$ for one Lie algebra to the exterior powers ${\bf \Lambda}^k(V(\infty))$ for the other (in particular, it identifies the adjoint representations for $\mathfrak{o}(\infty)$ and $\mathfrak{sp}(\infty)$).

There exists a similarly defined involutive tensor functor on the category of tensor modules $\mathbb T_{\mathfrak{sl}(\infty)}$, and it also interchanges $\operatorname{\bf S}^k(V(\infty))$ and ${\bf \Lambda}^k(V(\infty))$~\cite{S}. 

In Appendix B we prove the following version of $\mathfrak{osp}$-duality.
\begin{theorem}\label{Tilosp}\label{T49}There is an isomorphism between the lattices of ideals in $\operatorname{U}(\mathfrak{o}(\infty))$ and in $\operatorname{U}(\mathfrak{sp}(\infty))$.\end{theorem}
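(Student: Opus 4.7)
The plan is to leverage Serganova's $\mathfrak{osp}$-duality functor $F:\mathbb T_{\mathfrak o(\infty)}\to\mathbb T_{\mathfrak{sp}(\infty)}$, which interchanges symmetric and exterior powers of the natural module, and to extend it to a correspondence on a larger class of modules that captures all ideals. Classical finite-dimensional Howe duality pairs the spinor representation of $\mathfrak o$ with the Shale--Weil oscillator representation of $\mathfrak{sp}$, so one expects the extended duality to send $\operatorname{Spin}$ to the oscillator module. This reflects the asymmetry already visible in Proposition~\ref{Pclsm}: the extra basic c.l.s. for $\mathfrak o(\infty)$ coming from $\operatorname{Spin}$ must correspond, on the $\mathfrak{sp}(\infty)$ side, to a non-integrable primitive ideal (arising from the oscillator representation, i.e. the example mentioned in the introduction) rather than to an integrable one.

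Assuming such an extended duality is available, for an ideal $I\subset\operatorname{U}(\mathfrak o(\infty))$ I would define $\Phi(I):=\operatorname{Ann}_{\operatorname{U}(\mathfrak{sp}(\infty))}F(M_I)$, where $M_I$ is any module in the enlarged category with $\operatorname{Ann}_{\operatorname{U}(\mathfrak o(\infty))}M_I=I$, and define a candidate inverse symmetrically. Well-definedness (independence of the choice of $M_I$) reduces to the fact that $F$ sends direct sums to direct sums and thus preserves annihilators of sums of modules. The same fact makes $\Phi$ compatible with meets (intersections of ideals), while preservation of tensor products handles joins (ideal sums), so $\Phi$ is indeed a lattice isomorphism.

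The main obstacle is the extension step: one must enlarge $\mathbb T_{\mathfrak o(\infty)}$ and $\mathbb T_{\mathfrak{sp}(\infty)}$ to include the $\operatorname{Spin}$ and oscillator constructions, verify that $\mathfrak{osp}$-duality extends to this enlargement, and prove that every ideal on each side is the annihilator of a module in the enlargement. For integrable ideals this is delivered by Proposition~\ref{P44C}~c) together with the explicit generators from Proposition~\ref{Pclsm}; the real difficulty is to handle the non-integrable ideals on the $\mathfrak{sp}(\infty)$ side and to show that they correspond precisely to the $\operatorname{Spin}$-type integrable ideals on the $\mathfrak o(\infty)$ side. Once this matching is established, the theorem follows by a direct comparison of the explicit lists of generators of the two ideal lattices, confirming that $\Phi$ pairs them bijectively.
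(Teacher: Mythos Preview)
Your approach has a genuine gap that the paper's proof avoids entirely. You propose to define $\Phi(I)$ via the annihilator of $F(M_I)$ for some module $M_I$ with $\operatorname{Ann}M_I=I$, and you correctly identify the obstacle: you would need every ideal of $\operatorname{U}(\mathfrak o(\infty))$ and of $\operatorname{U}(\mathfrak{sp}(\infty))$ to arise as the annihilator of a module in your enlarged category. But this is precisely the open question~a) of the introduction (whether all ideals of $\operatorname{U}(\mathfrak o(\infty))$ are integrable), together with its $\mathfrak{sp}$-counterpart. Your proposal does not resolve it; it simply assumes it. Without that, $\Phi$ is not even defined on all ideals, and the ``direct comparison of explicit lists of generators'' you invoke at the end presupposes a classification of \emph{all} ideals that is not available.

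The paper's argument sidesteps this completely by never passing through annihilators of individual modules. Instead, it observes that $\operatorname{U}(\mathfrak g)$, under the adjoint action, is itself an object of the ind-completion $\mathbb T_{\mathfrak g(\infty)}^{ind}$, and that multiplication $\operatorname{U}(\mathfrak g)\otimes\operatorname{U}(\mathfrak g)\to\operatorname{U}(\mathfrak g)$ is a morphism there; this packages $\operatorname{U}(\mathfrak g)$ as a ``T-algebra'' in the tensor category. Two-sided ideals of $\operatorname{U}(\mathfrak g)$ are exactly the T-algebra ideals of this object. Serganova's equivalence $D_{\mathfrak o}^{\mathfrak{sp}}$ extends to $\mathbb T^{ind}$, and the heart of the proof is the verification that $D_{\mathfrak o}^{\mathfrak{sp}}$ carries the T-algebra $\operatorname{U}(\mathfrak o(\infty))$ to the T-algebra $\operatorname{U}(\mathfrak{sp}(\infty))$: one checks that the submodule of relations $x\otimes y-y\otimes x-[x,y]$ inside $\operatorname{T}^\cdot(\mathfrak o(\infty))$ is sent to the corresponding submodule for $\mathfrak{sp}(\infty)$, using that $\dim\operatorname{Hom}_{\mathfrak{sp}(\infty)}({\bf\Lambda}^2\mathfrak{sp}(\infty),\mathfrak{sp}(\infty))=1$. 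The lattice isomorphism then drops out immediately, for \emph{all} ideals, with no need to enlarge the category by $\operatorname{Spin}$ or oscillator modules and no dependence on any classification.
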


If $Y$ is a Young diagram, by $Y'$ we denote the conjugate Young diagram, i.e. the Young diagram whose column lengths equal the row lengths of $Y$. The isomorphism from Theorem~\ref{Tilosp} identifies the annihilators of the modules
$$V^{Y}\otimes({\bf \Lambda}^\cdot)^{\otimes v}\otimes(\operatorname{\bf S}^\cdot)^{\otimes w}\mbox{~and~}V^{Y'}\otimes({\bf \Lambda}^\cdot)^{\otimes w}\otimes (\operatorname{\bf S}^\cdot)^{\otimes v},$$
where one module is an $\mathfrak o(\infty)$-module and the other is an $\mathfrak{sp}(\infty)$-module.
Under the isomorphism of Theorem~\ref{Tilosp}, the annihilator of $\operatorname{Spin}$ (this annihilator is the kernel of the canonical homomorphism from $\operatorname{U}(\mathfrak o(\infty))$ to the Clifford algebra of $V(\infty)$) goes to the annihilator of a Shale-Weil module (this annihilator is the kernel of the canonical homomorphism from $\operatorname{U}(\mathfrak{sp}(\infty))$ to the Weyl algebra of $V(\infty)$).

For $\mathfrak{sl}(\infty)$, the corresponding involutive tensor functor identifies the annihilators of the $\mathfrak{sl}(\infty)$-modules
\begin{center}$V^{Y_l}\otimes[({\bf \Lambda}^\cdot)^{\otimes v}\otimes(\operatorname{\bf S}^\cdot)^{\otimes w}]\otimes V^{Y_r}_*$\end{center}and\begin{center}
$V^{Y_l'}\otimes[({\bf \Lambda}^\cdot)^{\otimes w}\otimes(\operatorname{\bf S}^\cdot)^{\otimes v}]\otimes V^{Y_r'}_*$.\end{center}


\section{Annihilators of highest weight $\mathfrak g(\infty)$-modules}\label{Sann}
We now present some results on the annihilators of simple highest weight modules of $\mathfrak g(\infty)=\mathfrak{sl}(\infty), \mathfrak o(\infty), \mathfrak{sp}(\infty)$. The notion of highest weight module is based on the notion of a splitting Borel subalgebra of $\mathfrak g(\infty)$, and in Appendix~A we have collected the necessary preliminaries. Very roughly, our main result in this direction is that most simple highest weight modules have trivial annihilator, and that the few ones that have a nontrivial annihilator are either integrable or very similar to integrable.

\subsection{Splitting Borel and Cartan subalgebras}\label{SSspl} First we fix the chain~(\ref{Ech1}) to be of the form:
$$\begin{array}{c|ccccccccc}A&\mathfrak{sl}(2)&\to&\mathfrak{sl}(3)&\to&...&\to&\mathfrak{sl}(n+1)&\to&...\\
B&\mathfrak o(3)&\to&\mathfrak o(5)&\to&...&\to&\mathfrak o(2n+3)&\to&...\\
C&\mathfrak{sp}(2)&\to&\mathfrak{sp}(4)&\to&...&\to&\mathfrak{sp}(2n)&\to&...\\
D&\mathfrak o(6)&\to&\mathfrak o(8)&\to&...&\to&\mathfrak o(2n+4)&\to&...\\
\end{array}.$$
Clearly, the chain $A$ corresponds to Lie algebra $\mathfrak{sl}(\infty)$, the chains $B$ and $D$ correspond to $\mathfrak o(\infty)$, and the chain $C$ corresponds to $\mathfrak{sp}(\infty)$.

One can pick Cartan subalgebras $\mathfrak h(n)\subset\mathfrak g(n)$ in such a way that the image of $\mathfrak h(n)$ under the map $\mathfrak g(n)\to\mathfrak g(n+1)$ lies in $\mathfrak h(n+1)$. Then we have a well-defined inductive limit $\mathfrak h:=\varinjlim \mathfrak h(n)$. The Lie algebra $\mathfrak h$ is a maximal commutative subalgebra of $\mathfrak g(\infty)$, and is a {\it splitting Cartan subalgebra} of $\mathfrak g(\infty)$~\cite{DPSn}. It is known that in $\mathfrak{sl}(\infty)$ and $\mathfrak{sp}(\infty)$ a splitting Cartan subalgebra is unique up to conjugation via the group $\operatorname{Aut}(\mathfrak g(\infty))$~\cite{DPSn}. In $\mathfrak o(\infty)$ there are two conjugacy classes of splitting Cartan subalgebras, see~\cite{DPSn} or Appendix~A. In the rest of this paper we fix splitting Cartan subalgebras $\mathfrak h^A\subset\mathfrak{sl}(\infty), \mathfrak h^C\subset\mathfrak{sp}(\infty), \mathfrak h^B, \mathfrak h^D\subset\mathfrak{o}(\infty)$. The latter two subalgebras belong to different conjugacy classes and arise respectively from the above sequences $B$ and $D$.

Any maximal locally solvable subalgebra $\mathfrak b\subset\mathfrak g(\infty)$ which contains a splitting Cartan subalgebra is called {\it a splitting Borel subalgebra}. We can assume that $\mathfrak b$ contains $\mathfrak h^A, \mathfrak h^B, \mathfrak h^C$ or $\mathfrak h^D$. Any linear order $\prec$ on $\mathbb Z_{>0}$ defines a splitting Borel subalgebra $\mathfrak b(\prec)~(\mathfrak b\supset\mathfrak h^{A/B/C/D}$): this is explained in Appendix~A. Moreover, any conjugacy class of pairs (splitting Borel subalgebra, splitting Cartan subalgebra) contains a pair $(\mathfrak b(\prec), \mathfrak h^{A/B/C/D})$ defined by a suitable order $\prec$. Thus, from now on, we fix a linear order $\prec$ on $\mathbb Z_{>0}$ and pick a Borel subalgebra \begin{center}$\mathfrak b:=\mathfrak b(\prec)$, $\mathfrak b\supset\mathfrak h^{A/B/C/D}$,\end{center} corresponding to this order.

Let $\mathbb Z_{>0}=S_1\sqcup ...\sqcup S_t$ be a finite partition of $\mathbb Z_{>0}$. We say that this partition is {\it compatible} with the order $\prec$ if, for any $i\ne j\le t$, \begin{center}$i<j\Rightarrow i_0\prec j_0$\end{center}for all $i_0 \in S_i, j_0\in S_j$.

\begin{definition}We call a splitting Borel subalgebra $\mathfrak b\supset \mathfrak h^{A/B/C/D}$ of $\mathfrak g$ {\it ideal} if it satisfies the following conditions\\
$A$-case: there exists a partition $\mathbb Z_{>0}=S_1\sqcup S_2\sqcup S_3$, compatible with the order $\prec$ defined by $\mathfrak b$, such that

$\bullet$ $S_1$ is countable, and $\prec$ restricted to $S_1$ is isomorphic to the standard order on $\mathbb Z_{>0}$.

$\bullet$ $S_3$ is countable, and $\prec$ restricted to $S_3$ is isomorphic to the standard order on $\mathbb Z_{<0}$.\\
$B/C/D$-cases: there exists a partition $\mathbb Z_{\ge0}=S_1\sqcup S_2$, compatible with the order $\prec$ defined by $\mathfrak b$, such that

$\bullet$ $S_1$ is countable, and $\prec$ restricted to $S_1$ is isomorphic to the standard order on $\mathbb Z_{>0}$.
\end{definition}

\subsection{Almost integral and almost half-integral  weights}
Let $\mathbb F^{\mathbb Z_{>0}}$ denote the set of functions from $\mathbb Z_{>0}$ to $\mathbb F$. For $f\in\mathbb F^{\mathbb Z_{>0}}$, by $|f|$ we denote the cardinality of the image of $f$. There is a morphism from $\mathbb F^{\mathbb Z_{>0}}$ to $\mathfrak h^*$ :\begin{equation}f\mapsto \lambda_f,\hspace{10pt}
\lambda_f(e_{i, -i})=f(i);\end{equation}
here $e_{i, -i}$ is some explicitly given basis element of $\mathfrak h^{B/C/D}$, see Appendix~A.
This map is surjective in all cases and is an isomorphism in the $B/C/D$-cases. 

\begin{definition}$A$-case: A function $f\in\mathbb F^{\mathbb Z_{>0}}$ is {\it integral} if $f(i)-f(j)\in\mathbb Z$ for all $i, j\in {\mathbb Z_{>0}}$, and is {\it almost integral} if $f(i)-f(j)\in\mathbb Z$ for all $i, j\in{\mathbb Z_{>0}}\backslash F$ for some finite set $F\subset{\mathbb Z_{>0}}$.

$B/C/D$-cases: A function $f\in\mathbb F^{\mathbb Z_{>0}}$ is {\it integral} (respectively, {\it half-integral}) if $f(i)\in\mathbb Z$ (respectively, $f(i)\in\mathbb Z+\frac12$) for all $i\in S$, and is {\it almost integral} (respectively, {\it almost half-integral}) if $f(i)\in\mathbb Z$ (respectively, $f(i)\in\mathbb Z+\frac12$) for all $i\in{\mathbb Z_{>0}}\backslash F$ for some finite subset $F\subset\mathbb Z_{>0}$.\end{definition}

Finally, we say that $f\in \mathbb F^{\mathbb Z_{>0}}$ is {\it locally constant with respect to} $\prec$ if there exists
a compatible partition $\mathbb Z_{>0}=S_1\sqcup...\sqcup S_t$ such
that $f$ is constant on $S_i$ for any $i\le t$.

\subsection{Main results}\label{SStmr} Let $\mathfrak h\subset\mathfrak g(\infty)=\mathfrak{sl}(\infty), \mathfrak o(\infty), \mathfrak{sp}(\infty)$ be a splitting Cartan subalgebra as in Subsection~\ref{SSspl}, and $\mathfrak b$ be a splitting Borel subalgebra. The map $\mathfrak h\to \mathfrak b/[\mathfrak b, \mathfrak b]$ is an isomorphism, hence any weight $\lambda\in\mathfrak h^*$ defines a character $\lambda:\mathfrak b\to\mathbb F$ or, equivalently, a 1-dimensional $\mathfrak b$-module $\mathbb F_\lambda$. We denote by $\operatorname{L}_\mathfrak b(\lambda)$ the unique simple quotient of the Verma module $\operatorname{M}_\mathfrak b(\lambda):=\operatorname{U}(\mathfrak g(\infty))\otimes_{\operatorname{U}(\mathfrak b)}\mathbb F_\lambda$. Put $\operatorname{L}_\mathfrak b(f):=\operatorname{L}_\mathfrak b(\lambda_f), \operatorname{M}_\mathfrak b(f):=\operatorname{M}_\mathfrak b(\lambda_f)$.

In the A-case, the following results have appeared in~\cite{PP2}.
\begin{theorem}\label{1Tbcd} Let $\prec$ be some order on $\mathbb Z_{>0}$, $\mathfrak b\supset\mathfrak h$ be the respective splitting Borel subalgebra of $\mathfrak g(\infty)$, and $f\in\mathbb F^{\mathbb Z_{>0}}$. Then $$\operatorname{Ann}_{\operatorname{U}(\mathfrak g(\infty))} \operatorname{L}_{\mathfrak b}(f)\ne 0$$ if and only if

(1) $f$ is almost integral in the $A$-case and $f$ is almost integral or almost half-integral in the $B/C/D$-cases,

(2) $f$ is locally constant with respect to $\prec$.\end{theorem}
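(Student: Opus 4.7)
The $A$-case is treated in~\cite{PP2}, and I focus on the $B/C/D$-cases; the strategy is parallel. The proof splits into necessity and sufficiency.

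For necessity, assume $\operatorname{Ann}_{\operatorname{U}(\mathfrak g(\infty))}\operatorname{L}_\mathfrak b(f)\ne 0$. The corollary to Theorem~\ref{Tlinfrk} yields an $r\ge 0$ with $\operatorname{Var}(\operatorname{Ann}\operatorname{L}_\mathfrak b(f)) = \mathfrak g(\infty)^{\le r}$. For each $n$, set $\mathfrak b(n) := \mathfrak b\cap\mathfrak g(n)$ and $f^{(n)} := f|_{\{1,\ldots,n\}}$; the cyclic $\operatorname{U}(\mathfrak g(n))$-submodule of $\operatorname{L}_\mathfrak b(f)$ generated by a highest weight vector has unique simple quotient $\operatorname{L}_{\mathfrak b(n)}(f^{(n)})$, so
\[\operatorname{Ann}_{\operatorname{U}(\mathfrak g(n))}\operatorname{L}_{\mathfrak b(n)}(f^{(n)}) \supset \operatorname{Ann}_{\operatorname{U}(\mathfrak g(\infty))}\operatorname{L}_\mathfrak b(f)\cap \operatorname{U}(\mathfrak g(n)).\]
Hence the associated variety of the left-hand side lies in $\mathfrak g(n)^{\le r}$, which imposes, uniformly in $n$, a bound on the excess of the Jordan partition $\pi_n$ of the associated nilpotent orbit over its trivial baseline. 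The algorithm of Subsection~\ref{SSrsa} computes $\pi_n$ from the combinatorics of the decomposition of the coordinates of $\lambda_{f^{(n)}}+\rho_n$ into integrality classes and their relative order along $\prec$. If $f$ violated condition~(1), infinitely many indices would fall into pairwise distinct integrality (or half-integrality) classes, and the number of nontrivial parts of $\pi_n$ would grow without bound; if $f$ satisfied (1) but violated~(2), corresponding growth would occur in the \emph{size} of the nontrivial parts. Either conclusion contradicts the uniform rank bound.

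For sufficiency, suppose (1) and (2) hold, and fix a compatible partition $\mathbb Z_{>0} = S_1\sqcup\ldots\sqcup S_t$ on which $f$ is constant off a finite exceptional set $F$. I would realize $\operatorname{L}_\mathfrak b(f)$ as a simple highest weight subquotient of a module parabolically induced from a splitting parabolic $\mathfrak p \supset \mathfrak b$, whose Levi subalgebra is a direct sum of finitely many copies of $\mathfrak{sl}(\infty)$, at most one copy of $\mathfrak g(\infty)$ of the same classical type (arising from the distinguished block containing the $\mathfrak o/\mathfrak{sp}$-fixed vectors), and a finite-dimensional summand arising from $F$. The almost (half-)integrality of $f$ on each infinite block $S_i$ ensures that the inducing character matches the highest weight of a simple submodule of a module of the form
\[V^{Y}\otimes({\bf \Lambda}^\cdot)^{\otimes v}\otimes(\operatorname{\bf S}^\cdot)^{\otimes w},\]
or of $\operatorname{Spin}$, respectively of the Shale--Weil module in the half-integral case. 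By Proposition~\ref{Pclsm} these modules have known nonzero annihilators, and pulling these annihilators back through induction produces a nonzero element of $\operatorname{Ann}_{\operatorname{U}(\mathfrak g(\infty))}\operatorname{L}_\mathfrak b(f)$.

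The hard part is the combinatorial bookkeeping with the algorithm of Subsection~\ref{SSrsa}: one must decouple the effects of (1) and (2) on $\pi_n$ and handle the half-integral case uniformly, since the Spin and Shale--Weil modules in the $B/C/D$-cases contribute a nontrivial baseline partition which the bare algorithm does not see directly. A secondary technical point is arranging the parabolic induction in the sufficiency direction so that the Levi component truly splits off the problematic finite exceptional set $F$ while remaining compatible with a splitting Borel.
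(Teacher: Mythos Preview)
Your necessity argument has a genuine gap. The paper breaks necessity into (a1) $|f|<\infty$, (a2) condition~(1), (a3) condition~(2). Step (a1) is Proposition~\ref{Papp} and uses a \emph{central-character} argument, not the associated variety: one varies over all order-preserving injections $\phi:\{1,\dots,n\}\to\mathbb Z_{>0}$ and observes that when $|f|=\infty$ the resulting Harish--Chandra parameters $\mathfrak G_n(\rho_n+\lambda_{f_\phi})$ are Zariski dense in $\mathfrak h_n^*$, forcing $I_n\cap\operatorname{ZU}(\mathfrak g(n))=0$ and hence $I_n=0$. Your sketch looks only at the single restriction to $\{1,\dots,n\}$ and at the associated nilpotent orbit; this cannot produce the needed density, and in the paper the corank estimate of Lemma~\ref{Lfcrk} is invoked for (a2) and (a3) only \emph{after} (a1) is secured. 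Your description of how (1) can fail is also incomplete: $f$ may violate (1) while having only two infinite integrality classes (infinitely many integer values and infinitely many half-integer values), not ``pairwise distinct'' ones, so the mechanism you name does not cover all cases.

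For sufficiency your parabolic-induction scheme is close in spirit to~\cite{PP2} in the integral and $B/D$ half-integral cases, but for $\mathfrak{sp}(\infty)$ with half-integral $f$ the paper proceeds differently: it writes $f=(f-f_\delta)+f_\delta$ with $f_\delta\equiv\tfrac12$, realizes $\operatorname{L}_\mathfrak b(f)$ as a subquotient of $\operatorname{L}_\mathfrak b(f-f_\delta)\otimes\operatorname{L}_\mathfrak b(f_\delta)$, and shows this tensor product has nonzero annihilator via $\mathfrak{osp}$-duality (Proposition~\ref{Ltd}). Your appeal to Proposition~\ref{Pclsm} does not cover the Shale--Weil module, since its annihilator is not integrable, so in the $C$ half-integral case your argument as written has no source for the required nonzero ideal.
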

\begin{theorem}[$A/B/D$-cases]\label{2T} The following conditions on a nonzero ideal $I$ of $\operatorname{U}(\mathfrak g(\infty))$ are equivalent:

--- $I=\operatorname{Ann}_{\operatorname{U}(\mathfrak g(\infty))} \operatorname{L}_{\mathfrak b}(f)$ for some splitting Borel subalgebra $\mathfrak b\supset\mathfrak h$ and some function $f\in\mathbb F^{\mathbb Z_{>0}}$;

--- $I$ is a prime integrable ideal of $\operatorname{U}(\mathfrak g(\infty))$;

--- $I=\operatorname{Ann}_{\operatorname{U}(\mathfrak g(\infty))} \operatorname{L}_{\mathfrak b^0}(f^0)$ for some $f^0\in\mathbb F^{\mathbb Z_{>0}}$, where $\mathfrak b^0$ is any fixed ideal Borel subalgebra.\\
\end{theorem}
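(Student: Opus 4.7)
The plan is to prove the three conditions equivalent cyclically, $(iii)\Rightarrow(i)\Rightarrow(ii)\Rightarrow(iii)$. The first implication is trivial, since an ideal Borel subalgebra is by definition a splitting Borel subalgebra. For $(i)\Rightarrow(ii)$, primeness follows from the fact that the annihilator of a simple module is primitive; integrability for $\mathfrak g(\infty)=\mathfrak{sl}(\infty),\mathfrak o(\infty)$ is the first main result announced in the Introduction, and is exactly what separates the $A/B/D$-cases from the $C$-case (in which the Shale--Weil oscillator module provides a nonintegrable primitive ideal, so condition (ii) would fail for such $I$).

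The substantive content is $(ii)\Rightarrow(iii)$. The plan is to apply Proposition~\ref{Pclsm} to present the prime integrable ideal $I$ as the annihilator of an explicit integrable tensor module $M$ of the listed form, and then to construct a weight $f^0\in\mathbb F^{\mathbb Z_{>0}}$, locally constant with respect to the order $\prec^0$ defining $\mathfrak b^0$, so that
$$\operatorname{Ann}_{\operatorname{U}(\mathfrak g(\infty))}\operatorname{L}_{\mathfrak b^0}(f^0)=\operatorname{Ann}_{\operatorname{U}(\mathfrak g(\infty))} M.$$
The partition $\mathbb Z_{>0}=S_1\sqcup S_2\sqcup S_3$ in the $A$-case (resp.\ $\mathbb Z_{>0}=S_1\sqcup S_2$ in the $B/D$-cases) supplied by the ideal property of $\mathfrak b^0$ provides enough room to build such an $f^0$: encode the column lengths $l_1\ge\dots\ge l_s$ of $Y_l$ as the first $s$ values of $f^0$ along $S_1$, partition the remainder of $S_1$ into $v+w$ infinite blocks on which $f^0$ takes pairwise distinct integer constants (one block per ${\bf\Lambda}^\cdot$- or $\operatorname{\bf S}^\cdot$-factor), and in the $A$-case mirror this construction on $S_3$ to encode $V^{Y_r}_*$. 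In the $B$-case set $f^0\equiv\tfrac12$ on $S_2$ if a $\operatorname{Spin}$ factor is present, and $f^0\equiv 0$ on $S_2$ otherwise. The resulting $f^0$ is almost integral (resp.\ almost half-integral) and locally constant with respect to $\prec^0$, so by Theorem~\ref{1Tbcd} the resulting simple highest weight module has a nonzero annihilator.

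What remains is to verify that the two annihilators coincide. Both are prime integrable ideals, by the already-established $(i)\Rightarrow(ii)$ together with Proposition~\ref{P44}, so by Proposition~\ref{P44C}c in the $B/D$-case it suffices to show that the associated coherent local systems satisfy $\operatorname{Q}_{\operatorname{L}_{\mathfrak b^0}(f^0)}=\operatorname{Q}_M$; for $\mathfrak{sl}(\infty)$ one argues instead at the level of left c.l.s.\ as in Remark~\ref{R45}. The c.l.s.\ $\operatorname{Q}_M$ can be computed via the combinatorial description of~\cite[Theorem 2.3]{PSt}, while $\operatorname{Q}_{\operatorname{L}_{\mathfrak b^0}(f^0)}$ is to be read off constituent-by-constituent from the restrictions $\operatorname{L}_{\mathfrak b^0}(f^0)|_{\mathfrak g(n)}$ using finite-dimensional highest weight theory applied to the truncation of $f^0$. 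I expect the main obstacle to lie exactly here: matching these restriction decompositions with the tensor-product decompositions of $M|_{\mathfrak g(n)}$ requires sustained bookkeeping as $n$ grows, and it is precisely the ideal property of $\mathfrak b^0$ that keeps the various ``layers'' of $f^0$ well separated for all $n$, ensuring that each eventually contributes its intended tensor factor to the c.l.s.
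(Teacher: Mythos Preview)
Your cyclic scheme $(iii)\Rightarrow(i)\Rightarrow(ii)\Rightarrow(iii)$ matches the paper's approach (which simply refers to \cite[Theorem~3.2]{PP2}, with Proposition~\ref{Lexpl} replacing \cite[Proposition~2.10]{PP2}), but two of your steps are not actually carried out.

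For $(i)\Rightarrow(ii)$ you invoke ``the first main result announced in the Introduction,'' but that announcement \emph{is} Theorem~\ref{2T}; this is circular. The actual argument starts from Theorem~\ref{1Tbcd} (so that $f$ is almost (half-)integral and locally constant) and then \emph{computes} $\operatorname{I}_{\mathfrak b}(f)$ explicitly as the annihilator of one of the integrable modules listed in Proposition~\ref{Pclsm}. That computation is a genuine extension of Proposition~\ref{Lexpl} beyond the $\mathfrak b$-dominant case, and it is the substance of the theorem; you have not supplied it.

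For $(ii)\Rightarrow(iii)$ your verification collapses whenever $M$ carries an $(\operatorname{\bf S}^\cdot)^{\otimes w}$ factor with $w>0$. By Proposition~\ref{Lexpl}a) and Corollary~\ref{C58}, a $\mathfrak b^0$-dominant $f^0$ yields only annihilators of locally finite codimension (no $\operatorname{\bf S}^\cdot$), so your $f^0$ must be non-dominant. Then by Lemma~\ref{L56} the module $\operatorname{L}_{\mathfrak b^0}(f^0)$ is not integrable: its restrictions to $\mathfrak g(n)$ are infinite-dimensional, the symbol $\operatorname{Q}_{\operatorname{L}_{\mathfrak b^0}(f^0)}$ is undefined, and ``finite-dimensional highest weight theory applied to the truncation of $f^0$'' does not produce a c.l.s.\ to match against $\operatorname{Q}_M$. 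What is missing is again the non-dominant analogue of Proposition~\ref{Lexpl}; once one has an explicit formula for $\operatorname{I}_{\mathfrak b^0}(f^0)$ in terms of the data $(Y_l, v, w, \ldots)$, both $(i)\Rightarrow(ii)$ and $(ii)\Rightarrow(iii)$ follow by comparison with Proposition~\ref{Pclsm}.
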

\begin{proposition}\label{Pp} If $\mathfrak b$ is a nonideal Borel subalgebra then there exists a prime integrable ideal $I$ which does not arise as the annihilator of a simple $\mathfrak b$-highest weight $\mathfrak g(\infty)$-module.\end{proposition}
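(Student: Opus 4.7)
The plan is to exhibit, for any nonideal splitting Borel subalgebra $\mathfrak{b}$, a simple tensor $\mathfrak{g}(\infty)$-module whose annihilator---a prime integrable ideal by Proposition~\ref{Pclsm}---cannot arise as $\operatorname{Ann}_{\operatorname{U}(\mathfrak{g}(\infty))}\operatorname{L}_\mathfrak{b}(\lambda)$ for any $\lambda$. The key leverage is the uniqueness statement contained in the third corollary in the introduction: since any simple object $V^Y\in\mathbb{T}_{\mathfrak{g}(\infty)}$ is (up to isomorphism) the unique simple $\mathfrak{g}(\infty)$-module annihilated by $\operatorname{Ann}V^Y$, it suffices to produce a simple tensor module that admits no $\mathfrak{b}$-highest weight vector at all.

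First I would translate the nonideality of $\mathfrak{b}$ into a finiteness condition on the linear order $\prec$ defining $\mathfrak{b}$. Let $I_\prec$ denote the set of $x\in\mathbb{Z}_{>0}$ with only finitely many $\prec$-predecessors. A short check shows that $I_\prec$ is a $\prec$-initial segment well-ordered by $\prec$ in which every element has finite rank, so $I_\prec$ is either finite or of order type $\omega$. Reading the definition of ideal Borel: in the $B/C/D$-cases $\mathfrak{b}$ is ideal iff $|I_\prec|=\infty$; in the $A$-case, writing $T_\prec$ for the analogous set of elements with finitely many $\prec$-successors, $\mathfrak{b}$ is ideal iff both $|I_\prec|=\infty$ and $|T_\prec|=\infty$. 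Thus nonideality forces $|I_\prec|<\infty$, or in the $A$-case $|T_\prec|<\infty$.

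Next I would match this combinatorics to $\mathfrak{b}$-highest weight vectors in $V^Y$. For $Y$ with column heights $l_1\ge\dots\ge l_s>0$, the weights of $V^Y$ lie in the $S_\infty$-orbit (augmented by sign flips in the $B/C/D$-cases) of $\lambda_Y=l_1\varepsilon_1+\dots+l_s\varepsilon_s$. Using the convention (Appendix A) that $\varepsilon_i+\varepsilon_j$ is a positive root of $\mathfrak{b}(\prec)$ in the $B/C/D$-cases, any $\mathfrak{b}$-maximal weight of $V^Y$ uses only $+\varepsilon$'s and places the values $l_1,\dots,l_s$ at the $\prec$-smallest $s$ indices $i_1\prec\dots\prec i_s$. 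Such a configuration exists iff $\prec$ has an $s$-element initial segment, equivalently $|I_\prec|\ge s$. Hence if $n:=|I_\prec|<\infty$, then for any $Y$ with $s=n+1$ columns---for instance $\Lambda^{n+1}(V(\infty))$---the module $V^Y$ has no $\mathfrak{b}$-highest weight vector. In the $A$-case with $T_\prec$ finite one argues symmetrically with $V^Y_*$.

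To conclude, Proposition~\ref{Pclsm}(a) says $\operatorname{Ann}_{\operatorname{U}(\mathfrak{g}(\infty))}V^Y$ is a prime integrable ideal, and the uniqueness corollary forces any simple $\mathfrak{g}(\infty)$-module with annihilator $\operatorname{Ann}V^Y$ to be isomorphic to $V^Y$ itself. Since $\operatorname{L}_\mathfrak{b}(\lambda)$ is by construction generated by a $\mathfrak{b}$-highest weight vector, an isomorphism $\operatorname{L}_\mathfrak{b}(\lambda)\cong V^Y$ would supply $V^Y$ with such a vector, contradicting the previous step. Therefore $\operatorname{Ann}V^Y$ is not of the form $\operatorname{Ann}\operatorname{L}_\mathfrak{b}(\lambda)$ for any $\lambda$. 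The main delicate point is the assertion that no $\mathfrak{b}$-maximal weight of $V^Y$ can involve a $-\varepsilon_j$ summand in the $B/C/D$-cases; but this is immediate from the positivity of $\varepsilon_i+\varepsilon_j$, since flipping any $-\varepsilon_j$ to $+\varepsilon_j$ strictly raises the weight in the $\mathfrak{b}$-ordering.
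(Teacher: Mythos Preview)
Your argument is correct and close in spirit to the paper's, though packaged through a different lemma. The paper defers to \cite[Proposition~3.3]{PP2}, with Proposition~\ref{Lexpl} replacing \cite[Proposition~2.10]{PP2}. That line of argument works through the explicit annihilator formula: since $\operatorname{Ann}V^Y$ has locally finite codimension, any $\operatorname{L}_\mathfrak b(f)$ with this annihilator is forced to be integrable, hence $f$ is $\mathfrak b$-dominant (Lemma~\ref{L56}); Proposition~\ref{Lexpl}a) together with the uniqueness in Proposition~\ref{Pclsm}a) then pins down $Y=Y_l(f)$ and $q=0$, and one checks that the longest row of $Y_l(f)$---namely $|\{i:f(i)\ge 1\}|$, the size of a finite $\prec$-initial segment---cannot exceed $|I_\prec|$. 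You bypass the explicit formula by invoking the stronger uniqueness (the third corollary of the introduction, i.e.\ Proposition~\ref{P59}) to conclude $\operatorname{L}_\mathfrak b(f)\cong V^Y$ outright, and then argue directly that $V^Y$ admits no $\mathfrak b$-highest weight vector. Both routes boil down to the same combinatorial observation about finite $\prec$-initial segments being contained in $I_\prec$.

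Two small caveats. First, Proposition~\ref{P59} is stated in the paper only for $\mathfrak{sl}(\infty)$; the direction you actually need---that a simple tensor module is determined up to isomorphism by its annihilator---is handled for general $\mathfrak g(\infty)$ in the last paragraph of its proof (citing~\cite{Sa}), and there is no circularity with Proposition~\ref{Pp} since that paragraph rests on Proposition~\ref{Lexpl}c) rather than on Proposition~\ref{Pp}. Second, for $\mathfrak{sp}(\infty)$ the module ${\bf\Lambda}^{n+1}(V(\infty))$ is not itself simple; you should write $V^Y$ for the single-row diagram of length $n+1$ rather than the exterior power.
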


The proofs of Theorems~\ref{1Tbcd},~\ref{2T} and Proposition~\ref{Pp} for the B/C/D-cases are given in Section~\ref{S6} below.

\subsection{The annihilators of simple integrable highest weight modules} We should point out that Theorems~\ref{1Tbcd}-\ref{2T} come short of an explicit computation of the annihilator $\operatorname{I}_\mathfrak b(f)$ of a given simple highest weight module $\operatorname{L}_\mathfrak b(f)$. 
In this subsection we present an explicit formula for $\operatorname{I}_\mathfrak b(f)$ under the assumption that the
 $\mathfrak g(\infty)$-module $\operatorname{L}_\mathfrak b(f)$  is integrable.

Set $\operatorname{I}_\mathfrak b(f):=\operatorname{Ann}_{\operatorname{U}(\mathfrak g(\infty))}\operatorname{L}_\mathfrak b(f)$. The following lemma is straightforward.
\begin{lemma}\label{L56} Let $f\in\mathbb F^{\mathbb Z_{>0}}$ be a function and $\mathfrak b$ be a splitting Borel subalgebra of $\mathfrak g(\infty)$ such that $\mathfrak b\supset\mathfrak h^{A/B/C/D}$. The following conditions are equivalent:

$\bullet$ $\operatorname{L}_{\mathfrak b}(f)$ is an integrable $\mathfrak g(\infty)$-module,

$\bullet$ $f$ is $\mathfrak b$-dominant  (see Appendix~A for the definition).\end{lemma}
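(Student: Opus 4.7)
The plan is to prove the equivalence by reducing to the classical finite-dimensional fact that a cyclic highest weight module over a simple Lie algebra $\mathfrak g(n)$ is finite-dimensional if and only if its highest weight is dominant integral. Throughout, set $\mathfrak b_n:=\mathfrak b\cap\mathfrak g(n)$ and $\mathfrak h(n):=\mathfrak h^{A/B/C/D}\cap\mathfrak g(n)$, and write $v\in\operatorname{L}_\mathfrak b(f)$ for the highest weight vector. Unwinding the definition in Appendix~A, $f$ is $\mathfrak b$-dominant precisely when $\lambda_f|_{\mathfrak h(n)}$ is $\mathfrak b_n$-dominant integral for every $n$.

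For the implication ``integrable $\Rightarrow$ $\mathfrak b$-dominant'', I would apply Definition~4.3 to the finitely generated subalgebra $\operatorname{U}(\mathfrak g(n))\subset\operatorname{U}(\mathfrak g(\infty))$: integrability yields $\dim(\operatorname{U}(\mathfrak g(n))\cdot v)<\infty$. Since the positive root vectors of $\mathfrak b_n$ are a subset of those of $\mathfrak b$, the vector $v$ is also a $\mathfrak b_n$-highest weight vector of weight $\lambda_f|_{\mathfrak h(n)}$, making $\operatorname{U}(\mathfrak g(n))\cdot v$ a finite-dimensional cyclic highest weight $\mathfrak g(n)$-module. This forces $\lambda_f|_{\mathfrak h(n)}$ to be dominant integral for $\mathfrak b_n$, and running this argument over all $n$ yields $\mathfrak b$-dominance of $f$.

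For the converse I would construct $\operatorname{L}_\mathfrak b(f)$ explicitly as a direct limit of finite-dimensional simple modules. Let $L_n$ be the simple finite-dimensional $\mathfrak g(n)$-module with highest weight $\lambda_f|_{\mathfrak h(n)}$ (which exists by $\mathfrak b$-dominance), with highest weight vector $v_n$. The vector $v_{n+1}\in L_{n+1}$ is annihilated by every $e_\alpha$ for $\alpha\in\Delta^+_{\mathfrak b_{n+1}}\supset\Delta^+_{\mathfrak b_n}$, hence is also a $\mathfrak b_n$-highest weight vector of weight $\lambda_f|_{\mathfrak h(n)}$. The cyclic submodule $\operatorname{U}(\mathfrak g(n))\cdot v_{n+1}\subset L_{n+1}|_{\mathfrak g(n)}$ is a highest weight module (hence indecomposable) and a submodule of a finite-dimensional module over the semisimple $\mathfrak g(n)$ (hence semisimple by Weyl's theorem), so it is simple and isomorphic to $L_n$. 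These provide canonical $\mathfrak g(n)$-equivariant embeddings $L_n\hookrightarrow L_{n+1}$ forming a directed system, and $L:=\varinjlim L_n$ is a well-defined $\mathfrak g(\infty)$-module.

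Finally, $L$ is integrable (any $m\in L$ lies in some $L_n$, and any finitely generated $\operatorname{U}'\subset\operatorname{U}(\mathfrak g(\infty))$ lies in $\operatorname{U}(\mathfrak g(N))$ for large $N$, so $\operatorname{U}'\cdot m\subset L_N$ is finite-dimensional) and simple (a nonzero submodule $W\subset L$ meets some $L_n$ nontrivially, whence $W\supset L_{n'}$ for all $n'\ge n$ by simplicity of each $L_{n'}$, giving $W=L$). The image of $v_n$ in $L$ has $\mathfrak h$-weight $\lambda_f$ and is annihilated by every $e_\alpha$, $\alpha\in\Delta^+_\mathfrak b$ (such an $e_\alpha$ lies in some $\mathfrak g(N)$ and kills $v_N$ in $L_N$), so $L$ is a simple highest weight $\mathfrak g(\infty)$-module of highest weight $\lambda_f$ and therefore isomorphic to $\operatorname{L}_\mathfrak b(f)$, which is thus integrable. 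I expect the only delicate step to be the identification $\operatorname{U}(\mathfrak g(n))\cdot v_{n+1}\cong L_n$, which is where the semisimplicity of $L_{n+1}|_{\mathfrak g(n)}$ is essential.
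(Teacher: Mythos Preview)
Your proof is correct and supplies exactly the details that the paper omits: the paper simply declares the lemma ``straightforward'' and gives no argument. Your reduction to the finite-dimensional case via the cyclic submodules $\operatorname{U}(\mathfrak g(n))\cdot v$ and the direct-limit construction $\varinjlim L_n$ is the natural way to make that claim precise, and the step you flag as delicate (identifying $\operatorname{U}(\mathfrak g(n))\cdot v_{n+1}$ with $L_n$ via Weyl's theorem) is handled correctly.
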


We pick a linear order $\prec$ on $\mathbb Z_{>0}$, and thus a Borel subalgebra $\mathfrak b\subset\mathfrak g(\infty)$ such that $\mathfrak b\supset\mathfrak h^{A/B/C/D}$. We also pick a $\mathfrak b$-dominant function $f\in\mathbb F^{\mathbb Z_{>0}}$. Theorem~\ref{1Tbcd} implies that if $|f|=\infty$ then $\operatorname{I}_{\mathfrak b}(f)=0$. Thus, from now on, we assume that $|f|<\infty$.

The equivalent conditions from Lemma~\ref{L56} imply that in the $C$-case $f$ is integral, and that in the $A$-case we can assume without loss of generality $f$ has integer values. In the $B/D$-cases we can assume that the values of $f$ are positive. Here we have to consider two different subcases: $f$ is integral, $f$ is half-integral.
In all cases the maximal and minimal value of $f$ are well defined: we denote them by $a$ and $b$ respectively. 
For any  $c\in\mathbb Z\cup (\mathbb Z+\frac12)$ we let

$|\le c|$ be the cardinality of the subset of $f^{-1}([b, c])\subset \mathbb Z_{>0}$,

$p$ be the smallest integer or half-integer such that $|\le p|=+\infty$,

$|\ge c|$ be the cardinality of the subset of $f^{-1}([c, a])\subset \mathbb Z_{>0}$,

$q$ be the largest integer of half-integer such that $|\ge q|=+\infty$.\\
By $Y_r(f)$ we denote the Young diagram whose sequence of row lenghts equals the sequence
$$|\le(p-1)|\ge |\le(p-2)|\ge ...\ge |\le b|>0$$
if $|\le b|\in\mathbb Z_{>0}$; in case $|\le b|=\infty$, we set $Y_r(f):=\emptyset$. Finally, let $Y_l(f)$ be the Young diagram whose sequence of row lenghts equals the sequence
$$|\ge(q+1)|\ge|\ge q|\ge ...\ge |\ge a|>0$$
for $|\ge a|\in\mathbb Z_{>0}$; in case $|\ge a|=\infty$, we set $Y_l(f):=\emptyset$.
\begin{proposition}\label{Lexpl}\label{CexY}

a) Fix a $\mathfrak b$-dominant function $f\in\mathbb F^{\mathbb Z_{>0}}$ with $|f|<\infty$. We have \begin{eqnarray}\begin{aligned}\operatorname{I}_\mathfrak b(f)=\operatorname{Ann}_{\operatorname{U}(\mathfrak g(\infty))}( V^{Y_l(f)}
\otimes({\bf \Lambda}^\cdot)^{\otimes(q-p)}\otimes V_*^{Y_r(f)})\label{Eclsbcd}\end{aligned}\end{eqnarray}
in the $A$-case,
\begin{equation}\operatorname{I}_\mathfrak b(f)=\operatorname{Ann}_{\operatorname{U}(\mathfrak g(\infty))}(V^{Y_l(f)}\otimes ({\bf \Lambda}^\cdot)^{\otimes q})\label{F1}\end{equation}
in the $B/C/D$-cases whenever $f$ is integral, and
\begin{equation}\operatorname{I}_\mathfrak b(f)=\operatorname{Ann}_{\operatorname{U}(\mathfrak g(\infty))}(V^{Y_l(f)}\otimes ({\bf \Lambda}^\cdot)^{\otimes q-\frac12}\otimes \operatorname{Spin})\footnote{Taking into account the equality $\operatorname{Ann}_{\operatorname{U}(\mathfrak o(\infty))}(\operatorname{Spin}\otimes\operatorname{Spin})=\operatorname{Ann}_{\operatorname{U}(\mathfrak o(\infty))}({\bf \Lambda}^\cdot)$, and thus thinking of $\operatorname{Spin}$ as $({\bf \Lambda}^\cdot)^\frac12$, one sees the analogy between formulas~(\ref{F1}) and~(\ref{F2}).}\label{F2}\end{equation}in the $B/D$-cases whenever $f$ is half-integral.

b) A c.l.s. from~(\ref{Eclsbcd}) is of finite type.

c) Let $\mathfrak b^0$ be a fixed ideal Borel subalgebra of $\mathfrak g(\infty)$. Then any irreducible c.l.s. of finite type equals to $\operatorname{Q}_{\operatorname{L}_{\mathfrak b^0}(f^0))}$ for an appropriate $\mathfrak b^0$-dominant function $f^0\in\mathbb F^{\mathbb Z_{>0}}$.\end{proposition}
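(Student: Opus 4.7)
The plan for part (a) is to compute the coherent local system $\operatorname{Q}_{\operatorname{L}_\mathfrak b(f)}$ and match it with the c.l.s. of the candidate tensor module on the right-hand side. Since $f$ is $\mathfrak b$-dominant, Lemma~\ref{L56} gives that $\operatorname{L}_\mathfrak b(f)$ is integrable, so $\operatorname{I}_\mathfrak b(f)$ is an integrable ideal; by Remark~\ref{R45} and Proposition~\ref{P44C}c), integrable ideals are uniquely determined by their c.l.s., reducing the question to a comparison of c.l.s. The c.l.s. of each factor of the candidate module ($V^{Y_l}$, $V^{Y_r}_*$, ${\bf \Lambda}^\cdot$, $\operatorname{Spin}$) is read off from \cite[Theorem~2.3]{PSt}, and the c.l.s. of their tensor product is assembled via the Cartan-product formalism of \cite{Zh1}.

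To compute $\operatorname{Q}_{\operatorname{L}_\mathfrak b(f)}$ itself, the hypothesis $|f|<\infty$ partitions $\mathbb Z_{>0}$ into finitely many $f$-level sets, which I would split into ``end'' level sets (those taking $f$-values outside $[p,q]$, which are necessarily finite) and ``middle'' level sets (infinite, with value in $[p,q]$). The end sets are exactly recorded by $Y_l(f)$ and $Y_r(f)$ via the counting functions $|\ge c|$ and $|\le c|$: in the branching of the $\mathfrak{b}\cap\mathfrak g(n)$-highest weight module $\operatorname{L}_{\mathfrak b\cap\mathfrak g(n)}(f|_{\mathfrak g(n)})$ (with $n$ taken large enough to contain all indices occurring in the end sets), these produce precisely the Young-diagram data of $V^{Y_l(f)}\otimes V^{Y_r(f)}_*$. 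An infinite level set at an integer value contributes the c.l.s. of ${\bf \Lambda}^\cdot$, because truncating to $\mathfrak g(n)$ and letting $n$ grow exhibits exactly the family $\{{\bf \Lambda}^k(V(n))\}$; an infinite level set at a half-integer value produces $\operatorname{Spin}$ in the same manner. The middle contributes $q-p$ (respectively $q-\tfrac12$) such factors, as recorded in the formulas.

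Part (b) is then immediate: no $\operatorname{\bf S}^\cdot$-factor appears in the right-hand side of (a), and each of $V^{Y_l}$, $V^{Y_r}_*$, ${\bf \Lambda}^\cdot$, $\operatorname{Spin}$ restricts to $\mathfrak g(n)$ with only finitely many isomorphism classes of simple summands (for ${\bf \Lambda}^\cdot$, across all $k$ only the modules ${\bf \Lambda}^j(V(n))$ with $0\le j\le n$ occur). A finite tensor product of modules whose c.l.s. is of finite type is itself of finite type. For part (c), let $Q$ be an irreducible c.l.s. of finite type; then Proposition~\ref{P44C}a) gives that $\operatorname{I}(Q)$ has locally finite codimension, hence rank $0$, and by Proposition~\ref{Pclsm}b) the integer $w$ in the description of Proposition~\ref{Pclsm}a) must vanish. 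Thus $Q$ is the c.l.s. of a module of one of the shapes appearing on the right of~(\ref{Eclsbcd})--(\ref{F2}). Given the parameters $(Y_l,Y_r,v)$ (with a possible $\operatorname{Spin}$-tail), I reverse-engineer $f^0$ on a fixed ideal Borel $\mathfrak b^0$ as follows: place the column-length sequence of $Y_l$ at descending integer values along the initial $\mathbb Z_{>0}$-segment of $\prec$, place the column-length sequence of $Y_r$ along the final $\mathbb Z_{<0}$-segment (in the $A$-case), and spread $v$ (or $v+\tfrac12$, in the half-integral case) infinite constant level sets across the middle segment $S_2$ at consecutive integer values. Dominance with respect to $\mathfrak b^0$ is built into the construction, and part (a) gives $\operatorname{Q}_{\operatorname{L}_{\mathfrak b^0}(f^0)}=Q$.

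The main obstacle will be the branching computation in part (a). The order $\prec$ can be essentially arbitrary, so $\mathfrak b$ need not be aligned with the standard exhaustion $\mathfrak g(1)\hookrightarrow\mathfrak g(2)\hookrightarrow\dots$, and a direct combinatorial analysis of the restriction of $\operatorname{L}_\mathfrak b(f)$ to $\mathfrak g(n)$ could be delicate. The most practical route is first to reduce to the ideal-Borel case using Theorem~\ref{2T} (together with its $C$-case analogue established in Section~\ref{S6}) so that the annihilator is invariant under replacing $(\mathfrak b,f)$ by a cleaner pair, and then perform the Littlewood-Richardson / Littlewood branching where the interplay between $\prec$ and the finite-rank subalgebras is transparent.
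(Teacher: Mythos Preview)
Your overall plan for parts (a)--(c) is close to what the paper (via its reference to \cite[Proposition~2.10]{PP2}) actually does: compute the c.l.s.\ of $\operatorname{L}_\mathfrak b(f)$ directly and compare it with Zhilinskii's description of the c.l.s.\ of the displayed tensor product. The concrete points (using Lemma~\ref{L56} to ensure integrability, invoking Proposition~\ref{P44C}/Remark~\ref{R45} to pass from annihilators to c.l.s., reading off the c.l.s.\ of the building blocks from \cite{PSt}, the finite-type argument for (b), and the reverse-engineering of $f^0$ for (c)) are all in order.

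There is, however, a genuine circularity in your last paragraph. You propose to handle the ``arbitrary $\prec$'' difficulty in part (a) by first invoking Theorem~\ref{2T} to replace $(\mathfrak b,f)$ by a pair with $\mathfrak b^0$ ideal. But look at Section~\ref{Spr1tb}: the proof of Theorem~\ref{2T} in the $B/C/D$-cases is obtained from the proof of \cite[Theorem~3.2]{PP2} \emph{by replacing \cite[Proposition~2.10]{PP2} with Proposition~\ref{Lexpl}}. In other words, Theorem~\ref{2T} is logically downstream of the very proposition you are trying to prove, so you cannot use it here.

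The way out is that the reduction is unnecessary. Since $f$ is $\mathfrak b$-dominant with $|f|<\infty$, the finitely many level sets of $f$ are already $\prec$-intervals; for each finite $S\subset\mathbb Z_{>0}$ the module $\operatorname{L}_\mathfrak b(f)|_{\mathfrak g(S)}$ has $\operatorname{L}_{\mathfrak b_S}(f|_S)$ as a direct summand, and the collection $\{\operatorname{L}_{\mathfrak b_S}(f|_S)\}_S$ already determines the irreducible c.l.s.\ $\operatorname{Q}_{\operatorname{L}_\mathfrak b(f)}$. The branching of these finite-dimensional modules along the chain is then the standard one (Pieri/Littlewood), and produces exactly the Zhilinskii product recorded by $Y_l(f)$, $Y_r(f)$, and the $({\bf \Lambda}^\cdot)$- or $\operatorname{Spin}$-factors. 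That direct computation is what \cite[Proposition~2.10]{PP2} carries out in the $A$-case, and it transports verbatim to $B/C/D$; no appeal to Theorem~\ref{2T} is needed or permitted.
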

\begin{proof}The proof is entirely similar to the proof of~\cite[Proposition~2.10]{PP2}.\end{proof}
\begin{corollary}\label{C58}The set of annihilators of simple integrable highest weight modules coincides with the set of two-sided ideals of locally finite codimension in $\operatorname{U}(\mathfrak g(\infty))$.\end{corollary}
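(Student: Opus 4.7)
My plan is to derive the corollary as a direct consequence of Proposition~\ref{Lexpl} combined with the lattice antiisomorphism of Proposition~\ref{P44C} a). I will establish the two inclusions separately.

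\emph{Annihilators of simple integrable highest weight modules have locally finite codimension.} I take a simple integrable highest weight module $L_{\mathfrak b}(f)$ and set $I := \operatorname{Ann}_{\operatorname{U}(\mathfrak g(\infty))} L_{\mathfrak b}(f)$. By Lemma~\ref{L56}, integrability is equivalent to $\mathfrak b$-dominance of $f$. If $I = 0$ there is nothing to prove; otherwise Theorem~\ref{1Tbcd} forces $f$ to be locally constant with respect to $\prec$, and since a compatible partition is by definition finite this yields $|f| < \infty$. Proposition~\ref{Lexpl} a) then identifies $I$ with the annihilator of an explicit tensor module whose c.l.s.\ is of finite type by Proposition~\ref{Lexpl} b). The antiisomorphism of Proposition~\ref{P44C} a) delivers the conclusion that $I$ has locally finite codimension.

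\emph{Ideals of locally finite codimension are annihilators of simple integrable highest weight modules.} Let $I$ be an ideal of locally finite codimension. Proposition~\ref{P44C} a) gives $I = \operatorname{I}(Q)$ for a c.l.s.\ $Q$ of finite type (in the $A$-case, one must work with left c.l.s., cf.\ Remark~\ref{R45}). When $Q$ is irreducible, Proposition~\ref{Lexpl} c) immediately provides $f^0$ with $I = \operatorname{Ann} L_{\mathfrak b^0}(f^0)$. For a reducible $Q$, I will construct $f^0$ by inverting Proposition~\ref{Lexpl} a): given the combinatorial triple $(Y_l, v, Y_r)$ (with obvious modifications in the $B/C/D$-cases, including a possible $\operatorname{Spin}$-factor in the half-integral $B/D$ case) encoding the tensor module whose annihilator is $I$, I assign values of $f^0$ to the blocks $S_1 \sqcup S_2 \sqcup S_3$ (respectively $S_1 \sqcup S_2$) of the ideal Borel $\mathfrak b^0$ so that the finite-multiplicity values above the infinite-multiplicity stratum realize the row lengths of $Y_l$, the stratum itself accounts for the gap $q - p = v$, and the finite-multiplicity values below realize $Y_r$. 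Proposition~\ref{Lexpl} a) applied to $f^0$ then recovers $I = \operatorname{Ann} L_{\mathfrak b^0}(f^0)$.

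The main technical point in the second inclusion is the combinatorial matching step: one must verify that every c.l.s.\ of finite type --- equivalently, every ideal of locally finite codimension via Proposition~\ref{P44C} a) --- arises as $\operatorname{Q}_{V^{Y_l} \otimes (\Lambda^\cdot)^{\otimes v} \otimes V_*^{Y_r}}$ or its $B/C/D$ analogue for some admissible triple. This is where Zhilinskii's classification of c.l.s.\ of finite type and the explicit computation of the c.l.s.\ of these tensor modules from~\cite[Theorem~2.3]{PSt} enter, and the argument closely parallels the proof of Proposition~2.10 in~\cite{PP2} as the authors indicate.
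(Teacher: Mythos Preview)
Your argument for the forward inclusion follows the route implicit in the paper: Proposition~\ref{Lexpl} a)--b) together with Proposition~\ref{P44C} a) shows that a nonzero annihilator $\operatorname{I}_{\mathfrak b}(f)$ of a simple integrable highest weight module has locally finite codimension. The sentence ``If $I=0$ there is nothing to prove'' is not correct, however: the zero ideal does \emph{not} have locally finite codimension, and there do exist $\mathfrak b$-dominant $f$ with $|f|=\infty$ (for instance $f(i)=-i$ for the standard order in the $A$-case), for which $\operatorname{I}_{\mathfrak b}(f)=0$ by Proposition~\ref{Papp}. The corollary must therefore be read as a statement about nonzero annihilators, and you should say so rather than dismiss the case.

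The reverse inclusion contains a genuine gap. The annihilator of any simple module is primitive, hence prime. But an ideal $I=\operatorname{I}(Q)$ with $Q$ a \emph{reducible} c.l.s.\ of finite type is an intersection of distinct prime integrable ideals and is itself not prime. Such an $I$ therefore cannot be realised as $\operatorname{Ann}_{\operatorname{U}(\mathfrak g(\infty))} \operatorname{L}_{\mathfrak b^0}(f^0)$ for \emph{any} simple module, and your proposed ``inversion'' of Proposition~\ref{Lexpl} a) --- which would encode $I$ by a single tensor module with parameters $(Y_l, v, Y_r)$ --- cannot succeed. The corollary is evidently intended as an equality between the set of nonzero annihilators of simple integrable highest weight modules and the set of \emph{prime} ideals of locally finite codimension; under that reading, Proposition~\ref{Lexpl} c) combined with Proposition~\ref{P44C} a) yields the reverse inclusion immediately, and your separate treatment of the reducible case is neither needed nor possible.
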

\subsection{Simple modules which are determined up to isomorphism by their annihilators}\label{SSnew}
It is well known that if $\mathfrak g$ is finite dimensional and semisimple, then a simple $\mathfrak g$-module $M$ is determined up to isomorphism by its annihilator in $\operatorname{U}(\mathfrak g)$ if and only if $M$ is finite dimensional. We now provide an analogue of this fact for $\mathfrak g(\infty)=\mathfrak{sl}(\infty), \mathfrak o(\infty), \mathfrak{sp}(\infty)$.

Recall that a simple object of the category $\mathbb T_{\mathfrak g(\infty)}$ is a simple $\mathfrak g(\infty)$-submodule of the tensor algebra T$^\cdot(V(\infty)\oplus V(\infty)_*)$ for $\mathfrak g(\infty)=\mathfrak{sl}(\infty)$, and of the tensor algebra T$^\cdot(V(\infty))$ for $\mathfrak g(\infty)=\mathfrak o(\infty), \mathfrak{sp}(\infty)$~\cite{DPS, PS}. It is easy to check that, for any fixed ideal Borel subalgebra $\mathfrak b^0$, the simple modules in the category $\mathbb T_{\mathfrak g(\infty)}$ are precisely the highest weight modules $\operatorname{L}_{\mathfrak b^0}(f)$ for which $f$ can be chosen to be integral and constant except at finitely many points (recall that the isomorphism class of a module $\operatorname{L}_{\mathfrak b^0}(f)$ recovers $f$ in the $B/C/D$-cases, and recovers $f$ up to an additive constant in the $A$-case).
We refer to these modules as {\it simple tensor modules}.
\begin{proposition}\label{P59} Let $M$ be a simple $\mathfrak{sl}(\infty)$-module which is determined up to isomorphism by its annihilator $I=\operatorname{Ann}_{\operatorname{U}(\mathfrak g(\infty))}M$. If $I$ is integrable, then $M$ is isomorphic to a simple tensor module.\end{proposition}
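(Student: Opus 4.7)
The plan is to combine the classification of prime integrable ideals in Proposition~\ref{Pclsm} with the explicit annihilator computation of Proposition~\ref{Lexpl} in order to reduce the statement to the following rigidity fact: among simple $\mathfrak{sl}(\infty)$-modules with a given integrable primitive annihilator $I$, the simple tensor modules (and only they) are pinned down up to isomorphism by $I$.

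First, since $M$ is simple, the ideal $I$ is primitive; since $I$ is also integrable by hypothesis, Proposition~\ref{Lprpr} yields that $I$ is prime. By Proposition~\ref{Pclsm}~a) there is then a unique quadruple $(Y_l,Y_r,v,w)$ with
$$
I=\operatorname{Ann}_{\operatorname{U}(\mathfrak{sl}(\infty))}\bigl(V^{Y_l}\otimes({\bf \Lambda}^\cdot)^{\otimes v}\otimes(\operatorname{\bf S}^\cdot)^{\otimes w}\otimes V^{Y_r}_*\bigr).
$$
The core of the argument is that the uniqueness hypothesis on $M$ forces $v=w=0$. Once this is established, fix an ideal Borel $\mathfrak b^0$ and take a $\mathfrak b^0$-dominant function $f$ with $Y_l(f)=Y_l$, $Y_r(f)=Y_r$ and $p(f)=q(f)$ (such an $f$ exists, and takes its common value away from a finite subset of $\mathbb{Z}_{>0}$); Proposition~\ref{Lexpl} then yields $\operatorname{Ann}_{\operatorname{U}(\mathfrak{sl}(\infty))}\operatorname{L}_{\mathfrak b^0}(f)=I$. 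Since $f$ is integral and constant away from a finite set, $\operatorname{L}_{\mathfrak b^0}(f)$ is a simple tensor module in the sense of Subsection~\ref{SSnew}, and the uniqueness hypothesis on $M$ gives $M\cong\operatorname{L}_{\mathfrak b^0}(f)$.

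It remains to rule out $v+w>0$, by exhibiting, in each such case, two non-isomorphic simple modules with annihilator $I$. When $w=0$ and $v\ge 2$, this is immediate from Proposition~\ref{Lexpl}: one chooses two $\mathfrak b^0$-dominant functions $f_1,f_2$ sharing the invariants $(Y_l(f_i),Y_r(f_i),q(f_i)-p(f_i))=(Y_l,Y_r,v)$ but differing in the distribution of values in the interval $(p,q)$, for instance by having $f_1$ take only the extremal values $p$ and $q$ infinitely often while $f_2$ takes some intermediate value infinitely often as well. The modules $\operatorname{L}_{\mathfrak b^0}(f_1)$ and $\operatorname{L}_{\mathfrak b^0}(f_2)$ then share the annihilator $I$ but are non-isomorphic since their highest weights are not related by an additive constant. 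In the remaining sub-cases, namely $v=1$ with $w=0$ (where the $\mathfrak b^0$-dominant function realizing the given invariants is essentially unique) and $w\ge 1$ (where any $\operatorname{L}_{\mathfrak b^0}(f^0)$ with annihilator $I$ is necessarily non-integrable), I would invoke Theorem~\ref{2T} to produce one such $\operatorname{L}_{\mathfrak b^0}(f^0)$ and build a non-isomorphic companion either by passing to a non-ideal Borel subalgebra or by varying $f^0$ within the larger set of non-dominant functions that yield the same annihilator.

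The main obstacle is precisely this last sub-case, $v+w>0$ with $v\le 1$, where the explicit formula of Proposition~\ref{Lexpl} is either insufficient or inapplicable. Establishing the required multiplicity here calls for an analysis of the annihilators of non-integrable simple highest weight modules, or of simple highest weight modules over non-ideal Borel subalgebras in the spirit of Proposition~\ref{Pp}, to produce, for every prime integrable ideal $I$ with $v+w>0$, at least two non-isomorphic simple $\mathfrak{sl}(\infty)$-modules annihilated exactly by $I$.
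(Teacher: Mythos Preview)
Your strategy is different from the paper's and, as you yourself note, incomplete. The paper does not attempt to rule out $v+w>0$ by a case analysis on $(v,w)$; instead it splits according to whether $I$ has locally finite codimension. When it does not (this is your case $w\ge 1$, by Proposition~\ref{Pclsm}~b) and Proposition~\ref{Lexpl}~b)), the paper cites an analogue of \cite[Lemma~6.8]{PP2} to exhibit two non-isomorphic $\operatorname{L}_{\mathfrak b^0}(f_i)$ with annihilator $I$. This is exactly the missing input in your case $w\ge 1$; your suggestion to ``vary $f^0$ within the larger set of non-dominant functions'' points in the right direction but is not carried out.

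When $I$ has locally finite codimension (your case $w=0$), the paper avoids case analysis altogether. Proposition~\ref{Lexpl}~c) together with the uniqueness hypothesis gives $M\cong\operatorname{L}_{\mathfrak b^0}(f^0)$ for some $\mathfrak b^0$-dominant $f^0$. The key new idea is that $I$ is invariant under the group $\tilde G=\{g\in\operatorname{Aut}_{\mathbb F}V(\infty)\mid g^*(V(\infty)_*)=V(\infty)_*\}$ acting on $\operatorname{U}(\mathfrak{sl}(\infty))$; the uniqueness hypothesis then forces the isomorphism class of $M$ to be $\tilde G$-invariant, and Theorems~3.4 and~4.2 of \cite{DPS} identify such integrable modules as simple tensor modules. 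This handles all of $w=0$ (including your troublesome $v=1$) in one stroke.

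Your assertion that for $v=1$, $w=0$ the $\mathfrak b^0$-dominant function is ``essentially unique'' is in fact wrong, so your own method does extend there: for an ideal Borel with partition $\mathbb Z_{>0}=S_1\sqcup S_2\sqcup S_3$ with $S_2$ infinite, the functions taking values $(1,0,0)$ and $(1,1,0)$ on $(S_1,S_2,S_3)$ both have $Y_l=Y_r=\emptyset$ and $q-p=1$, hence share the annihilator of ${\bf\Lambda}^\cdot$, yet yield non-isomorphic highest weight modules (cf.\ the Remark following the proposition). So the only genuine gap in your approach is $w\ge 1$, and closing it amounts to reproving the relevant lemma from \cite{PP2}.
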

\begin{proof} If $I$ is not of locally finite codimension, then a straightforward analogue of~\cite[Lemma~6.8]{PP2} implies that there exist $f_1, f_2\in \mathbb F^{\mathbb Z_{>0}}$ such that $$\operatorname{Ann}_{\operatorname{U}(\mathfrak g(\infty))} \operatorname{L}_{\mathfrak b^0}(f_1)=\operatorname{Ann}_{\operatorname{U}(\mathfrak g(\infty))} \operatorname{L}_{\mathfrak b^0}(f_2)=I$$but $\operatorname{L}_{\mathfrak b^0}(f_1)\not\cong \operatorname{L}_{\mathfrak b^0}(f_2)$.

Assume now that $I$ has locally finite codimension. Then $I=\operatorname{I}(Q)$ for an irreducible c.l.s. of finite type $Q$, and by Proposition~\ref{Lexpl} c) $M$ is isomorphic to $\operatorname{L}_{\mathfrak b^0}(f^0)$ for some ideal Borel subalgebra $\mathfrak b^0$ and some $\mathfrak b^0$-dominant function $f^0$. Moreover, as $I$ is clearly fixed under the group $\tilde G:= \{g\in\operatorname{Aut}_\mathbb FV(\infty)\mid g^*(V(\infty)_*)=V(\infty)_*\}$ considered as a group of automorphisms of $\operatorname{U}(\mathfrak{sl}(\infty))$, it follows that $M$ is invariant under $\tilde G$. Now Theorems~3.4 and~4.2 in~\cite{DPS} imply that $\operatorname{L}_\mathfrak b(f)$ is a simple tensor module.

It remains to show that a simple tensor $\mathfrak g(\infty)$-module $M$ is determined up to isomorphism by its annihilator $\operatorname{Ann}_{\operatorname{U}(\mathfrak g(\infty))}M$. If $M'$ is a simple $\mathfrak g(\infty)$-module with $\operatorname{Ann}_{\operatorname{U}(\mathfrak g(\infty))}M'=\operatorname{Ann}_{\operatorname{U}(\mathfrak g(\infty))} M=I$, then the fact that $I$ has locally finite codimension implies that $M'$ is integrable and that the c.l.s. of $M'$ coincides with the c.l.s. of $I$, i.e., $\operatorname{Q}_M=\operatorname{Q}_{M'}$. A further consideration  (carried out in detail in A.~Sava's master's thesis~\cite{Sa}) shows that $M'$ is a highest weight $\mathfrak g(\infty)$-module with respect to the some ideal Borel subalgebra, and that the highest weight of $M$ equals the highest weight of $M'$. This implies $M'\cong M$.\end{proof}
\begin{remark} Any ideal $I\subset\operatorname{U}(\mathfrak g(\infty))$ as in Proposition~\ref{P59} has locally finite codimension. This follows from Corollary~\ref{C58} but also from the observation that the c.l.s. $\operatorname{Q}_M$ of a simple tensor module $M$ is of finite type. However, not every integrable highest weight module is a tensor module: this applies, for instance, to the integrable $\frak{sl}(\infty)$-module $L_{\frak b^0}(f)$ where $\frak b^0$ is an ideal Borel subalgebra corresponding to a partition $$\mathbb Z_{>0}=S_1\sqcup S_2\sqcup S_3,$$ and $f|_{S_1}=1, f|_{S_2}=f|_{S_3}=0$. Consequently, not every prime integrable ideal of locally finite codimension is the annihilator of a 
tensor module.\end{remark}

\section{Proofs of the results of Subsection~\ref{SStmr}}\label{S6}
In the present section $\mathfrak g(\infty)=\mathfrak o(\infty), \mathfrak{sp}(\infty)$. In Subsection~\ref{SSap} we prove a proposition which is an essential part of Theorem~\ref{1Tbcd}. The rest of the proofs we present in Subsection~\ref{Spr1tb}. They are relatively short but involve a lot of preliminary material from Subsections~\ref{SSsnot}, \ref{SScp}-\ref{SSSrrsa}.
\subsection{$S$-notation}\label{SSsnot}
We use the notation of Appendix~A. Let $S$ be a subset of $\mathbb Z_{>0}$. Put

$$\mathfrak g(S):=\begin{cases} \PenkovPetukhovBr{\{e^B_{\pm i, \pm j}\}_{i, j\in S}, \{e^B_{i, 0}, e^B_{0, i}\}_{i\in S}}&\mbox{in~the~$B$-case,}\\
\PenkovPetukhovBr{\{e^C_{\pm i, \pm j}\}_{i, j\in S}}&\mbox{in~the~$C$-case,}\\
\PenkovPetukhovBr{\{e^D_{\pm i, \pm j}\}_{i, j\in S}}&\mbox{in~the~$D$-case.}\\
\end{cases}$$
We have $\mathfrak g(\mathbb Z_{>0})=\mathfrak g(\infty)$.

Set $\mathfrak h_S:=\mathfrak h\cap\mathfrak g(S)$, and observe that $\mathfrak h_S=\PenkovPetukhovBr{e_{i,-i}}_{i\in S}$ in the $B/C/D$-cases. Note that

$\bullet$ if $S$ is finite, then $\mathfrak g(S)$ is isomorphic to $\mathfrak{sl}(n)$ in the $A$-case, to $\mathfrak o(2n+1)$ in the $B$-case, to $\mathfrak{sp}(2n)$ in the $C$-case, and to $\mathfrak o(2n)$, in the $D$-case, where $n=|S|$ is the cardinality of $S$; in addition, $\mathfrak h_S$ is a Cartan subalgebra of $\mathfrak g(S)$;

$\bullet$ if $S$ is infinite, then $\mathfrak g(S)$ is isomorphic to $\mathfrak g(\infty)$, and $\mathfrak h_S$  is a splitting Cartan subalgebra of $\mathfrak g(S)$.\\
Put also $\mathfrak b_S:=\mathfrak g(S)\cap\mathfrak b$ for the fixed splitting Borel subalgebra $\mathfrak b$ of $\mathfrak g(\infty)$. Clearly,

$\bullet$ if $S$ is finite, then $\mathfrak b_S$ is a Borel subalgebra of $\mathfrak g(S)$,

$\bullet$ if $S$ is infinite, then $\mathfrak b_S$ is a splitting Borel subalgebra of $\mathfrak g(S)$.

Let $\mathbb F^S$ denote the set of functions from $S$ to $\mathbb F$. Then $\mathbb F^S$ is a vector space of
dimension~$|S|$ if $S$ is finite. When $S=\{1,..., n\}$ we write simply $\mathbb F^n$ instead of $\mathbb F^{\{1,..., n\}}$. There is an isomorphism $\mathbb F^S\cong
\mathfrak h_S^*$ if $|S|>1$:\begin{equation}f\mapsto \lambda_f,\hspace{10pt}
\lambda_f(e_{i, -i})=f(i).\end{equation}
Next, we set $$\operatorname{M}_{\mathfrak b_S}(f):=\operatorname{U}(\mathfrak g(S))\otimes_{\operatorname{U}(\mathfrak b_S)}\mathbb F_{f}$$ for all $f\in\mathbb F^S$, where $\mathbb F_f$ is the 1-dimensional $\mathfrak b_S$-module assigned to $f$ as in Subsection~\ref{SStmr}. By $\operatorname{L}_{\mathfrak b_S}(f)$  we denote the unique simple quotient of $\operatorname{M}_{\mathfrak b_S}(f)$.

\subsection{Application of $S$-notation}\label{SSap} In this subsection we use $S$-notation to prove the following proposition. This proof is taken almost verbatim from the proof of~\cite[Proposition 4.1]{PP2}.
\begin{proposition}\label{Papp}Let $f\in\mathbb F^{\mathbb Z_{>0}}$. If $\operatorname{I}(f)\ne0$, then $|f|<\infty$.\end{proposition}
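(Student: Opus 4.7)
The plan is to prove the contrapositive: assuming $|f|=\infty$, I will show that $\operatorname{I}(f)=\operatorname{Ann}_{\operatorname{U}(\mathfrak g(\infty))}\operatorname{L}_\mathfrak b(f)=0$. Pick any nonzero $u\in\operatorname{U}(\mathfrak g(\infty))$. Using $\operatorname{U}(\mathfrak g(\infty))=\bigcup_{S_0}\operatorname{U}(\mathfrak g(S_0))$ with $S_0\subset\mathbb Z_{>0}$ finite, there is a fixed finite $S_0$ such that $u\in\operatorname{U}(\mathfrak g(S_0))$, so it suffices to exhibit a vector $w\in\operatorname{L}_\mathfrak b(f)$ with $u\cdot w\ne 0$.

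Following the $\mathfrak{sl}(\infty)$ proof of~\cite[Proposition~4.1]{PP2} almost verbatim, the heart of the argument is to construct a family of $\mathfrak g(S_0)$-highest weight vectors $w_\mu\in\operatorname{L}_\mathfrak b(f)$ whose weights $\mu$ fill a Zariski-dense subset of $\mathfrak h_{S_0}^*$. Enlarge $S_0$ to a finite $S\supset S_0$ on which $f$ takes many distinct values (possible precisely because $|f|=\infty$) and consider vectors
$$w=\Bigl(\prod_\alpha e_{\alpha}^{\,n_\alpha}\Bigr)v_f,$$
where $\alpha$ runs over the $\mathfrak b$-negative roots of $\mathfrak g(S)$ whose support meets both $S_0$ and $S\setminus S_0$ (in type $B$, including root vectors of the form $e^B_{\pm i,0}$ and $e^B_{0,\pm j}$), and the exponents $n_\alpha\in\mathbb Z_{\ge 0}$ are chosen so that (i) every positive root vector of $\mathfrak g(S_0)$ kills $w$, making it a $\mathfrak g(S_0)$-highest weight vector of weight $\mu=f|_{S_0}-\sum_\alpha n_\alpha\alpha|_{\mathfrak h_{S_0}}$, and (ii) $w$ survives as a nonzero vector in the simple quotient $\operatorname{L}_\mathfrak b(f)$. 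By varying $S$ and the exponents, the differences $f(j_1)-f(j_2)$ for $j_1,j_2\in S\setminus S_0$ supply enough freedom to make the resulting set of weights $\mu$ Zariski-dense in $\mathfrak h_{S_0}^*$.

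For $\mu$ restricted to an antidominant regular subfamily, the $\mathfrak g(S_0)$-submodule $\operatorname{U}(\mathfrak g(S_0))\cdot w_\mu\subset\operatorname{L}_\mathfrak b(f)$ is a simple Verma $\operatorname{M}_{\mathfrak b_{S_0}}(\mu)=\operatorname{L}_{\mathfrak b_{S_0}}(\mu)$. The hypothesis $u\in\operatorname{I}(f)$ forces $u$ to annihilate each such submodule, and hence $u\in\bigcap_\mu\operatorname{Ann}_{\operatorname{U}(\mathfrak g(S_0))}\operatorname{M}_{\mathfrak b_{S_0}}(\mu)$. Expanding $u$ in a PBW basis adapted to the triangular decomposition $\mathfrak g(S_0)=\mathfrak n^-_{S_0}\oplus\mathfrak h_{S_0}\oplus\mathfrak n^+_{S_0}$ and using that a Zariski-dense family of central characters separates all elements of $\operatorname{U}(\mathfrak g(S_0))$ (via the standard evaluation of PBW monomials on highest-weight vectors of Vermas), this intersection equals zero, contradicting $u\ne 0$.

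The principal technical obstacle is step~(ii) above: checking that the concrete vectors $w$ remain nonzero in $\operatorname{L}_\mathfrak b(f)$ and not merely in the Verma $\operatorname{M}_\mathfrak b(f)$. This requires a careful PBW analysis of the maximal submodule of $\operatorname{M}_\mathfrak b(f)$, carried out in the orthogonal/symplectic conventions of Appendix~A — accounting for the pairings $e_{\pm i,\pm j}\leftrightarrow e_{\mp j,\mp i}$ imposed by the bilinear form, and, in type $B$, the extra generators $e^B_{0,i}$ and $e^B_{i,0}$. Once this root-system bookkeeping is settled, the highest-weight verification in~(i) and the genericity argument in the final step transfer directly from the $\mathfrak{sl}(\infty)$-case, since both are internal to the finite-dimensional Lie algebra $\mathfrak g(S_0)$.
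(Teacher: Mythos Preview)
Your outline diverges from the paper's argument, and the step you yourself flag as ``the principal technical obstacle'' --- showing that the vectors $w=\bigl(\prod_\alpha e_\alpha^{n_\alpha}\bigr)v_f$ survive in the simple quotient $\operatorname{L}_{\mathfrak b}(f)$ rather than dying in the maximal submodule of $\operatorname{M}_{\mathfrak b}(f)$ --- is never carried out; you only assert that once this bookkeeping is settled the rest follows. Without that survival check the density-of-weights argument has no input, so the proof as written is incomplete. (A smaller point: the weights $\mu$ you obtain on $\mathfrak h_{S_0}$ depend on $f|_{S_0}$ and on the integer exponents $n_\alpha$, not directly on the values $f(j)$ for $j\in S\setminus S_0$ as you suggest; those values enter only through the survival question you have deferred.)

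The paper sidesteps this difficulty by reversing the roles: instead of fixing $\mathfrak g(S_0)$ and manufacturing many highest-weight vectors inside $\operatorname{L}_{\mathfrak b}(f)$, it keeps the single highest-weight vector $v_f$ and varies the finite subalgebra. For every order-preserving injection $\phi:\{1,\dots,n\}\to\mathbb Z_{>0}$ the vector $v_f$ already generates a $\mathfrak g(\operatorname{im}\phi)$-highest-weight submodule $\widehat{\operatorname L}(f_\phi)$ with $f_\phi=f\circ\phi$, so no survival analysis is needed. A short conjugacy lemma (any two such injections are $\operatorname{Adj}\mathfrak g(\infty)$-conjugate, hence pull $\operatorname I(f)$ back to the same ideal $I_n\subset\operatorname U(\mathfrak g(n))$) yields $I_n\subset\bigcap_\phi\operatorname{Ann}\widehat{\operatorname L}(f_\phi)$. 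Intersecting with the center $\operatorname{ZU}(\mathfrak g(n))$ and using $|f|=\infty$, a two-line induction on coordinates shows $\overline{\bigcup_{\phi\in\operatorname{inj}(n)} f_\phi}=\mathbb F^n$, hence $\operatorname{ZVar}(I_n)=\mathfrak h_n^*$, so $I_n\cap\operatorname{ZU}(\mathfrak g(n))=0$ and $I_n=0$ by~\cite[Proposition~4.2.2]{Dix}. No PBW analysis of the maximal submodule enters at any stage; this is also what the actual proof in~\cite[Proposition~4.1]{PP2} does.
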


In the rest of this subsection we omit the superscripts $^{B/C/D}$ and write simply $\mathfrak g(\infty), \mathfrak g(S), \mathfrak g(n)$ instead of $$\mathfrak g^{B/C/D}(\infty),\hspace{10pt} \mathfrak g^{B/C/D}(S),\hspace{10pt} \mathfrak g^{B/C/D}(\{1,..., n\}).$$

The radical ideals of the center
$\operatorname{ZU}(\mathfrak g(n))$ of $\operatorname{U}(\mathfrak g(n))$ are in one-to-one
correspondence with $\mathfrak G_n$-invariant closed subvarieties of $\mathfrak
h_n^*$, where $\mathfrak h_n:=\mathfrak h\cap \mathfrak g(n)$ is a fixed Cartan subalgebra of $\mathfrak g(n)$ and $\mathfrak G_n$ is the respective Weyl group. Let
$I$ be an ideal of $\operatorname{U}(\mathfrak g(n))$. Then $\operatorname{ZVar}(I)$ denotes the subvariety of $\mathfrak h_n^*$
corresponding to the radical of the ideal $I\cap\operatorname{ZU}(\mathfrak g(n))$ of $\operatorname{ZU}(\mathfrak g(n))$. If
$\{I_t\}$ is any collection of ideals in $\operatorname{U}(\mathfrak g(n))$, then
\begin{equation}\operatorname{ZVar}(\cap_tI_t)=\overline{\cup_t\operatorname{ZVar}(I_t)}\label{Ezv},\end{equation} where, as in Subsection~\ref{SSavpi}, bar indicates Zariski closure.

Let $\phi: \{1,..., n\}\to \mathbb Z_{>0}$ be an injective map.
Slightly abusing notation, we denote 
by $\phi$ the induced homomorphism\begin{center}$\phi:
\operatorname{U}(\mathfrak g(n))\to\operatorname{U}(\mathfrak g(\infty))$.\end{center}
By $\operatorname{inj}(n)$ we denote the set of injective maps from $\{1,..., n\}$ to $\mathbb Z_{>0}$, and by $\operatorname{inj_0}(n)$ the set of order preserving maps from $\{1,..., n\}$ to $\mathbb Z_{>0}$ with respect to the standard order on $\{1,..., n\}$ and the order $\prec$ on $\mathbb Z_{>0}$.

For any $f\in \mathbb F^{\mathbb Z_{>0}}$ and $\phi\in\operatorname{inj}(n)$ we set $f_\phi:=f\circ\phi$. Then $\operatorname{M}(f_\phi):=\operatorname{M}_{\mathfrak b_{\operatorname{im}\phi}}(f_\phi)$ and $\operatorname{L}(f_\phi):=\operatorname{L}_{\mathfrak b_{\operatorname{im}\phi}}(f_\phi)$ are well defined $\mathfrak b_{\operatorname{im}\phi}$-highest weight $\mathfrak g(\phi)$-modules. If $f$ is $\mathfrak b$-dominant and $\phi\in\operatorname{inj}_0(n)$, then $f_\phi$ is $\mathfrak b_{\operatorname{im}\phi}$-dominant. 

Let $\phi\in\operatorname{inj}_0(n)$. By $\mathfrak g(\phi)$ we denote $\mathfrak g(\operatorname{im}\phi)\subset\mathfrak g(\infty)$. Let 
$\widetilde{\operatorname{M}}(f)$ be any quotient of $\operatorname{M}(f)$. It is well known
that\begin{center}$\operatorname{ZVar}(\operatorname{Ann}_{\operatorname{U}(\mathfrak g(\phi))}
\operatorname{M}(f_\phi))=\operatorname{ZVar}(\operatorname{Ann}_{\operatorname{U}(\mathfrak g(\phi))}\widetilde{\operatorname{M}}(f_\phi))=\operatorname{ZVar}(\operatorname{Ann}_{\operatorname{U}(\mathfrak g(\phi))}\operatorname{L}(f_\phi))=$\\$=\mathfrak
G_n(\rho_n+\lambda_{f_\phi})$,\end{center} where $\rho_n\in\mathfrak
h_n^*$ is the half-sum of positive roots.

Let $\mathfrak g$ be a Lie algebra. {\it The adjoint group of $\mathfrak g$} is the subgroup of $\operatorname{Aut}\mathfrak g$ generated by the exponents of all nilpotent elements of $\mathfrak g$. We denote this group by $\operatorname{Adj}\mathfrak g$.

\begin{lemma}\label{Lconj}Let $\phi_1: \mathfrak k\to\mathfrak g$ and $\phi_2: \mathfrak k\to\mathfrak g$ be two $\operatorname{Adj}\mathfrak g$-conjugate morphisms of Lie
algebras. Let $I$ be a two-sided ideal of $\operatorname{U}(\mathfrak g)$. Then
\begin{center}$\phi_1^{-1}(I)=\phi_2^{-1}(I)$.\end{center}\end{lemma}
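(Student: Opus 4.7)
The plan is to reduce the lemma to a single claim: every element $g$ of $\operatorname{Adj}\mathfrak g$ lifts to an algebra automorphism $\tilde g$ of $\operatorname{U}(\mathfrak g)$ which preserves every two-sided ideal. Once this is established, if $\phi_2=g\circ\phi_1$ for some $g\in\operatorname{Adj}\mathfrak g$, then the universal property of $\operatorname{U}(\cdot)$ gives $\phi_2=\tilde g\circ\phi_1$ as algebra homomorphisms, whence $\phi_2^{-1}(I)=\phi_1^{-1}(\tilde g^{-1}(I))=\phi_1^{-1}(I)$.

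By definition $\operatorname{Adj}\mathfrak g$ is generated by the exponentials $\exp(\operatorname{ad} x)$ for ad-nilpotent $x\in\mathfrak g$, so it suffices to handle a single such generator. First I would verify that $\operatorname{ad} x$, viewed as the inner derivation $a\mapsto xa-ax$ of $\operatorname{U}(\mathfrak g)$, is locally nilpotent. This is a standard observation: $\operatorname{ad} x$ preserves the PBW filtration $\operatorname{U}(\mathfrak g)^{\le d}$, and on a PBW monomial of degree $\le d$ one applies the Leibniz rule to see that, if $(\operatorname{ad} x)^N=0$ on $\mathfrak g$, then $(\operatorname{ad} x)^{Nd}$ annihilates $\operatorname{U}(\mathfrak g)^{\le d}$. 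Hence the series $\exp(\operatorname{ad} x)=\sum_{k\ge 0}\tfrac{1}{k!}(\operatorname{ad} x)^k$ converges pointwise on $\operatorname{U}(\mathfrak g)$ and defines an algebra automorphism (since $\operatorname{ad} x$ is a derivation).

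Next I would check that this automorphism restricts to the automorphism of $\mathfrak g$ we started with: on $\mathfrak g\subset\operatorname{U}(\mathfrak g)$ the derivation $\operatorname{ad} x$ agrees with the adjoint action, so $\exp(\operatorname{ad} x)|_{\mathfrak g}$ coincides with the given element of $\operatorname{Adj}\mathfrak g$. Finally, that $\tilde g$ preserves every two-sided ideal $I\subset\operatorname{U}(\mathfrak g)$ is immediate from the inner nature of $\operatorname{ad} x$: if $a\in I$ then $xa-ax\in I$, so $\operatorname{ad} x$ stabilizes $I$, and therefore $\exp(\operatorname{ad} x)$ does too. Closing up under compositions gives the claim for all of $\operatorname{Adj}\mathfrak g$ and completes the proof.

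There is no serious obstacle here; the only point that demands a short argument is the local nilpotency of $\operatorname{ad} x$ on $\operatorname{U}(\mathfrak g)$, after which the stability of $I$ under $\tilde g$ is essentially a tautology because the lift of $\operatorname{ad} x$ to $\operatorname{U}(\mathfrak g)$ is inner.
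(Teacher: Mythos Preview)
Your proof is correct and follows essentially the same route as the paper: lift $\operatorname{Adj}\mathfrak g$ to algebra automorphisms of $\operatorname{U}(\mathfrak g)$, observe that a two-sided ideal is stable under these automorphisms (because it is stable under the inner derivations $\operatorname{ad}\,x$), and conclude. The paper's version is simply terser, asserting the extension of the $\operatorname{Adj}\mathfrak g$-action and the $\operatorname{Adj}\mathfrak g$-stability of $I$ without spelling out the local-nilpotency argument you provide.
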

\begin{proof}The adjoint action of $\mathfrak g$ on $\operatorname{U}(\mathfrak g)$ extends
uniquely to an action of $\operatorname{Adj}\mathfrak g$ on $\operatorname{U}(\mathfrak g)$. The ideal
$I$ is $\mathfrak g$-stable and thus is $\operatorname{Adj}\mathfrak g$-stable. Let $g\in
\operatorname{Adj}\mathfrak g$ be such that $\phi_1=g\circ\phi_2$.
Then\begin{center}$\phi_1^{-1}(g(i))=\phi_2^{-1}(i)$\end{center}for any $i\in I$. Hence,
\begin{center}$\phi_1^{-1}(I)=\phi_2^{-1}(I)$.\end{center}\end{proof}

\begin{proof}[Proof of Proposition~\ref{Papp}]Let $\operatorname{I}(f)\ne 0$. Assume to the contrary that there exist $i_1,...,i_s,...\in
\mathbb Z_{>0}$ such that $$f(i_1),...., f(i_s),...$$are pairwise distinct
elements of $\mathbb F$. As $\operatorname{I}(f)\ne0$, there exists a positive integer $n$ and an injective
map $\phi: \{1,..., n\}\to \mathbb Z_{>0}$ such
that\begin{center}$I_\phi:=\operatorname{I}(f)\cap\operatorname{U}(\mathfrak g(\phi))\ne
0$,\end{center} or equivalently
\begin{equation}\operatorname{U}(\mathfrak g(n))\supset \phi^{-1}(\operatorname{I}(f))=\phi^{-1}(I_\phi)\ne0.\label{Eslp}\end{equation}

Let $\psi\in\operatorname{inj}(n)$ be another map. Since $\phi$ and $\psi$ are conjugate via the adjoint group of $\mathfrak g(\infty)$, we have
\begin{equation}\phi^{-1}(\operatorname{I}(f))=\psi^{-1}(\operatorname{I}(f))\ne0\label{Econj}.\end{equation}
This means that $\phi^{-1}(\operatorname{I}(f))$ depends on $n$ and $f$ but not on $\phi$, and we set $$I_n:=\phi^{-1}(\operatorname{I}(f)).$$

Assume now that $\phi\in\operatorname{inj_0}(n)$. Then the
highest weight space of the $\mathfrak g(\infty)$-module $\operatorname{L}_\mathfrak b(f)$ generates a highest weight $\mathfrak g(\phi)$-submodule
$\widehat{\operatorname{L}}(f_\phi)$. Clearly,
\begin{center}$\operatorname{Ann}_{\operatorname{U}(\mathfrak g(\phi))}\operatorname{L}_\mathfrak b(f)\subset
\operatorname{Ann}_{\operatorname{U}(\mathfrak g(\phi))}\widehat{\operatorname{L}}(f_\phi).$\end{center}Therefore,\begin{center}$I_n\subset\cap_{\phi\in
\operatorname{inj_0}(n)}\operatorname{Ann}_{\operatorname{U}(\mathfrak g(n))}\widehat{\operatorname{L}}(f_\phi)$\end{center}
and\begin{center}$I_n\cap\operatorname{ZU}(\mathfrak g(n))\subset\cap_{\phi\in
\operatorname{inj_0}(n)}(\operatorname{Ann}_{\operatorname{U}(\mathfrak g(n))} \widehat{\operatorname{L}}(f_\phi)\cap \operatorname{ZU}(\mathfrak g(n))).$\end{center}Hence,
according to~(\ref{Ezv}) we have
\begin{center}$\overline{\cup_{\phi\in \operatorname{inj_0}(n)}\mathfrak
G_n(\rho_n+\lambda_{f_\phi})}=\mathfrak G_n(\rho_n+\overline{\cup_{\phi\in
\operatorname{inj_0}(n)}\lambda_{f_\phi}})\subset \operatorname{ZVar}(I_n)$.\end{center}

We claim that
\begin{center}$\overline{\mathfrak
G_n(\cup_{\phi\in\operatorname{inj_0}(n)}\lambda_{f_\phi}})=\mathfrak
h_n^*,$
\end{center}and thus
that\begin{equation}\operatorname{ZVar}(I_n)=\mathfrak
h_n^*.\label{Efd1}\end{equation} Our claim is
equivalent to the equality \begin{center}$\overline{\mathfrak
G_n(\cup_{\phi\in\operatorname{inj_0}(n)}\lambda_{f_\phi}})=\overline{(\cup_{\phi\in\operatorname{inj}(n)}\lambda_{f_\phi}})=\mathfrak
h_n^*$
\end{center} which is
implied by the following
equality:\begin{equation}\overline{(\cup_{\phi\in\operatorname{inj}(n)}f_\phi})=\mathbb F^n\label{Efd}.\end{equation}

We now prove~(\ref{Efd}) by
induction. The inclusion $\{1,..., n-j\}\subset\{1,..., n\}$ induces a restriction map $$res: \mathbb F^n\to \mathbb F^{n-j}.$$ Denote by $f_\psi*$ the preimage of $f_\psi$ under
$res$ for $\psi\in\operatorname{inj}(n-j)$. 
We will show that 
\begin{equation}f_\psi*\subset\overline{\cup_{\phi\in
\operatorname{inj}(n)}f_\phi}\label{Efps}\end{equation}for any  $j\le n$ and any map $\psi\in\operatorname{inj}(n-j)$. This holds trivially for $j=0$. Assume that it also holds for $j$.
Fix $\psi\in \operatorname{inj}(n-j-1)$ and set
$$(\psi\times k)(l):=\begin{cases}\psi(l)&\mbox{if}~l\le n-j-1\\i_k&\mbox{if}~l=n-j\end{cases}.$$
It is clear that there exists $s\in\mathbb Z_{\ge1}$ such that
$$(\psi\times k)\in\operatorname{inj}(n-j)$$ for any $k\in\mathbb Z_{\ge s}$.
Moreover, $f_{\psi\times k_1}\ne f_{\psi\times k_2}$ for any $k_1\ne k_2$.
Therefore$$\overline{\cup_{k\in\mathbb Z_{\ge s}}f_{\psi\times
k}*}=f_\psi*,$$which yields~(\ref{Efps}).

For $j=n$,~(\ref{Efps}) yields $\mathbb F^n\subset \overline{\cup_{\phi\in
\operatorname{inj}(n)}f_\phi}$, consequently~(\ref{Efd}) holds. Then~(\ref{Efd1}) holds also, hence


$$I_n\cap\operatorname{ZU}(\mathfrak g(n))=0.$$
It is a well known fact that an ideal of $\operatorname{U}(\mathfrak g(n))$ whose intersection with $\operatorname{ZU}(\mathfrak g(n))$ equals zero is the zero ideal~\cite[Proposition 4.2.2]{Dix}. Therefore, we have a contradiction with~(\ref{Eslp}), and the proof is complete.
\end{proof}

\subsection{Combinatorics of partitions}\label{SScp}
\subsubsection{Partitions} In this paper, by a {\it partition} $p$ we understand a nondecreasing sequence $$p(0)\le p(1)\le...\le p(m)$$ of positive integers. We set $|p|:=p(0)+p(1)+...+p(m)$ and $\sharp p:=m+1$.
Clearly, any finite sequence of nonnegative integers $p(0),..., p(m)$ defines a unique partition via reordering and deleting possible zeros. 

For a partition $p=\{p(0),..., p(m)\}$, put $\widehat p(i):=|\{j\mid p_j\ge i\}|$. Let $\widehat p$ be the conjugate partition, i.e. the partition defined by the sequence $\widehat p(1), \widehat p(2),...$. Two partitions $p'$ and $p''$ can be combined into the partition $p'+p''$ obtained by reordering the sequence $p'(0), p'(1),..., p''(0), p''(1),...$. We set also $p'\widehat+p'':=\widehat q$ where $q=\widehat p'+\widehat p''$.

Given a partition $p=\{p(0),..., p(m)\}$, consider the sequence $$p^*(0),..., p^*(m-1), p^*(m),..., p^*(2m)$$ where $$p^*(0)=...=p^*(m-1)=0, p^*(m)=p(0),..., p^*(2m)=p(m).$$
Let $p^e$ be the partition corresponding to the sequence $p^*(0)\le p^*(2)\le...\le p^*(2m)$, and $p^o$ be the partition corresponding to the sequence $p^*(1)\le...\le p^*(2m-1)$ (``e'' and ``o'' stand for ``even'' and ``odd'').
\subsubsection{Lusztig sequences}
Let $Z_m$ denote the set of subsets of nonnegative integers with $m+1$ elements. An element $z\in Z_m$  is represented by a sequence $0\le z_0<z_1<...<z_m$. We assign to such a sequence $z\in Z_m$ the sequence $$p(z)(i):=z_i-i,$$ and denote by $p(z)$ the partition corresponding to this sequence. Conversely, to a partition $p$ with $\sharp p\le m+1$ we attach the sequence $z(p)\in Z_m$ such that $p(z(p))=p$. We say that two sequences $z\in Z_m$ and $z'\in Z_{m'}$ are {\it equivalent} if they correspond to the same partition.

For $z\in Z_{2m}$ we denote by $z^{even}$ the subsequence of $z$ which consists of even elements; we put also $z^{odd}:=z\backslash z^{even}$. We renumber the subsequences $z^{even}$ and $z^{odd}$ in the obvious way, and set  $$z_i^{Le}:=\frac{z^{even}_i}2, z_i^{Lo}:=\frac{z^{odd}_i-1}2.$$(``$L$'' stands for Lusztig). In addition, for a partition $p$ we set $$p^{Le}:=p(z(p)^{Le}), p^{Lo}:=p(z(p)^{Lo}).$$

In the rest of this subsection we will frequently work with another nonnegative integer $m'$. We put $$\Delta m:=m-m'.$$
Next, we define a partial inverse to the map $p\to (p^{Le}, p^{Lo})$. For $z\in Z_m, z'\in Z_{m'}$ we set $p:=p(z), p':=p(z')$. Let
$$\langle z, z'\rangle_{\Delta m} :=\{2z_0,..., 2z_m\}\sqcup\{2z'_0+1,..., 2z_{m'}'+1\}.$$

Clearly $\langle z, z'\rangle_{\Delta m}$ is a subset of positive integers with $m+m'+2$ elements and thus $\langle z, z'\rangle_{\Delta m}\in Z_{m+m'+1}$. Define $$\langle p, p'\rangle_{\Delta m}:=p(\langle z, z'\rangle_{\Delta m}).$$ It is easy to see that$$\langle p, p'\rangle_{\Delta m}^{Le}=p~\mbox{~and~}\langle p, p'\rangle_{\Delta m}^{Lo}=p'.$$ We say that $p$ is a {\it BV-partition} if $p=\langle p^{Le}, p^{Lo}\rangle_1$ (``$BV$'' stands for Barbasch and Vogan).
\subsubsection{Barbasch-Vogan functions} Consider two partitions $p', p''$. Given (unique) $z'\in Z_m, z''\in Z_{m'}$ such that $p'=p(z')$ and $p''=p(z'')$, we can consider $z'$ and $z''$ as partitions. This allows us to define $(z'+z'')^e$ and $(z'+z'')^o$. It is clear that all elements of $(z'+z'')^e$ are distinct, and thus $(z'+z'')^e\in Z_{m_e}$ where $m_e:=\lceil\frac{m+m'}2\rceil$ ($\lceil\cdot\rceil$ stands for ceiling). Similarly $(z'+z'')^o\in Z_{m_o}$ for $m_o:=\lfloor\frac{m+m'}2\rfloor$ ($\lfloor\cdot\rfloor$ stands for floor). We put
$$p~\star^e_{\Delta m}p':=p((z+z')^e),\hspace{10pt} p\star^o_{\Delta m}p':=p((z+z')^o).$$


The following functions play a significant role in what follows:

$$\mbox B(p_1, p_2, p_3):=\left<(p_1^{Le}\star_{1}^op_1^{Lo}) \widehat + (p_2^{Le}\star_{1}^op_2^{Lo})\widehat + p_3^o,  (p_1^{Le}\star_{1}^ep_1^{Lo})\widehat +(p_2^{Le}\star_{1}^ep_2^{Lo})\widehat +p_3^e\right>_{{-1}},$$

$$\mbox C(p_1, p_2, p_3):=\left<(p_1^{Le}\star_{1}^ep_1^{Lo}) \widehat + (p_2^{Le}\star_{0}^ep_2^{Lo})\widehat + p_3^e,  (p_1^{Le}\star_{1}^op_1^{Lo})\widehat +(p_2^{Le}\star_{0}^op_2^{Lo})\widehat +p_3^o\right>_{{1}},$$

$$\mbox D(p_1, p_2, p_3):=\left<(p_1^{Le}\star_{0}^op_1^{Lo}) \widehat + (p_2^{Le}\star_{0}^op_2^{Lo})\widehat + p_3^e,  (p_1^{Le}\star_{0}^ep_1^{Lo})\widehat +(p_2^{Le}\star_{0}^ep_2^{Lo})\widehat +p_3^o\right>_{{0}}.$$
\subsubsection{Equalities}
We have $p=\langle p^{Le}, p^{Lo}\rangle_{\Delta m}$ for some integer $\Delta m$.
 It is known that $|\widehat p|=|p|$ and $\widehat{\widehat p}=p$. We have
$$\begin{array}{|c|c|}\hline
1&\sharp p=\max(2\sharp p^e-1, 2\sharp p^o)\\\hline
2&\sharp \langle p, p'\rangle_{\Delta m}=\max(2\sharp p-\Delta m,2\sharp p'+\Delta m-1)\\\hline
3&\max(\sharp p, \sharp p'+\Delta m)=\max (\sharp p\star^e_{\Delta m}p'+\lfloor\frac{\Delta m}2\rfloor, \sharp p\star_{\Delta m}^op'+\lceil\frac{\Delta m}2\rceil)\\\hline
4&\sharp(p'\widehat{+}p''):=\max(\sharp p', \sharp p'')\\
\hline\end{array}$$

\subsubsection{Inequalities}\label{SSSin} Assume that $a, b, \Delta a, \Delta b$ are integers. Then we have

$$|\max (a+\Delta a, b+\Delta b)-\max (a, b)|\le\max(|\Delta a|, |\Delta b|).$$
This implies

$$\begin{array}{|c|llc|}\hline
\mbox{Label}&\mbox{Inequality}&&\\\hline
1&|\sharp p -\max(2\sharp p^e, 2\sharp p^o)|&\le& 1\\\hline
2&|\sharp \langle p, p'\rangle_{\Delta m}-\max(2\sharp p,2\sharp p')|&\le&\max(|\Delta m-1|, |\Delta m|)\\\hline

3.1&|\max(\sharp p, \sharp p'+\Delta m)-\max(\sharp p, \sharp p')|&\le&|\Delta m|\\\hline

3.2&\begin{array}{c}|\max (\sharp p\star^e_{\Delta m}p'+\lfloor\frac{\Delta m}2\rfloor, \sharp p\star_{\Delta m}^op'+\lceil\frac{\Delta m}2\rceil)-\\-\max (\sharp p\star^e_{\Delta m}p', \sharp p\star_{\Delta m}^op')|\end{array}&\le&\max(|\lfloor\frac{\Delta m}2\rfloor|, |\lceil\frac{\Delta m}2\rceil|)\\\hline

\end{array}$$
Under the assumption that $p, p_1$ and $p_2$ are Barbasch-Vogan partitions, and $p_3$ is an arbitrary partition, we obtain
$$\begin{array}{|c|ccccc|}\hline
\mbox{Label}&\mbox{Inequality}&&&&\\\hline
BC1&|\sharp p &-& 2\max(\sharp p^{Le}\star_{1}^op^{Lo}, \sharp p^{Le}\star_{1}^ep^{Lo})|&\le& 1+1+1\\\hline
D1&|\sharp p &-& 2\max(\sharp p^{Le}\star_{0}^ep^{Lo}, \sharp p^{Le}\star_{0}^op^{Lo})|&\le& 1+0+0\\\hline
B2&|\sharp B(p_1, p_2, p_3) &-& \max(\sharp p_1, \sharp p_2, \sharp p_3)|&\le& 3+2\\\hline
C2&|\sharp C(p_1, p_2, p_3) &-& \max(\sharp p_1, \sharp p_2, \sharp p_3)|&\le& 3+1\\\hline
D2&|\sharp D(p_1, p_2, p_3) &-& \max(\sharp p_1, \sharp p_2, \sharp p_3)|&\le& 1+1\\\hline
\end{array}.$$
\subsection{The associated variety of a simple $\mathfrak g$-highest weight module}\label{SSasv}
Let $\mathfrak g$ be a finite-dimensional semisimple Lie algebra with Borel subalgebra $\mathfrak b\subset \mathfrak g$ and Cartan subalgebra $\mathfrak h\subset\mathfrak b$. Let $\Delta\subset\mathfrak h^*$ be the root system of $\mathfrak g$, $W$ be the corresponding Weyl group and $\Delta^\pm$ be the set of positive and negative roots. By $\rho$ we denote the half-sum of all positive roots, and for any $\lambda\in\mathfrak h^*$ we denote by $\operatorname{L}(\lambda)$ the simple $\mathfrak g$-module with $\mathfrak b$-highest weight $\lambda$.

According to Duflo's Theorem, any primitive ideal of
$\operatorname{U}(\mathfrak g)$ is the annihilator of $\operatorname{L}(\lambda)$ for some $\lambda\in\mathfrak h^*$. The associated variety of $\operatorname{Ann}_{\operatorname{U}(\mathfrak g)}\operatorname{L}(\lambda)$ is the closure of a certain nilpotent coadjoint orbit $\EuScript O(\lambda)$ of $\mathfrak g^*\cong\mathfrak g$~\cite{Jo}.

From now on we fix $\lambda$. The goal of this and the next two subsections is to provide an explicit way for computing $\EuScript O(\lambda)$ when $\mathfrak g$ is a simple classical Lie algebra or a direct sum of simple classical Lie algebras. We first 
consider the case of regular integral weight $\lambda$ and then explain how to handle the general case modulo some computation in the category of finite groups which is carried out in~\cite{Lusztig}.

Assume first that $\mathfrak g$ is simple. Fix an invariant nondegenerate scalar product $(\cdot, \cdot)$ on $\mathfrak g$. The restriction of $(\cdot, \cdot)$ on $\mathfrak h$ is also nondegenerate and hence defines a scalar product $(\cdot, \cdot)$ on $\mathfrak h^*$. We recall that a weight $\mu$ is {\it regular} if $(\mu, \alpha)\ne 0$ for all $\alpha\in\Delta$, and that $\mu$ is {\it integral} if $\frac{2(\mu, \alpha)}{(\alpha, \alpha)}\in\mathbb Z$ for any $\alpha\in\Delta$.

Assume that $\lambda+\rho$ is regular and integral. Then there exists a unique $w\in W$ such that $w^{-1}(\lambda+\rho)-\rho$ is dominant. It is well known that in this case $$\EuScript O(\lambda)=\EuScript O(w\rho-\rho).$$ Thus we have a ``commutative diagram''

$$\begin{array}{ccc}\lambda&\mapsto&\EuScript O(\lambda)\\
\begin{turn}{270}$\mapsto$\end{turn}&&\begin{turn}{270}=\end{turn}\\
w&\mapsto&\EuScript O(w\rho-\rho)\end{array}.$$
The map $w\mapsto\EuScript O(w\rho-\rho)$ is described in~\cite{Barbasch-Vogan} for all classical simple Lie algebras.

Assume that $\lambda$ is regular but not necessarily integral. We set $$\Delta(\lambda):=\{\alpha\in\Delta\mid \frac{2(\alpha, \lambda)}{(\alpha, \alpha)}\in\mathbb Z\}.$$ It is clear that $\Delta(\lambda)$ is a root system and thus corresponds to a Lie algebra $\mathfrak g(\lambda)$~(which is not necessarily a subalgebra of $\mathfrak g$). If $\mathfrak g$ is classical, then $\Delta(\lambda)$ is a direct sum of simple root systems of classical type. We denote by $W(\lambda)$ the reflection subgroup of $W$ generated by $\Delta(\lambda)$, and refer to it as the {\it integral Weyl group} of $\lambda$ (note that if $\lambda$ is integral then $W(\lambda)=W$). As in the previous case, there exists a unique $w\in W(\lambda)$ such that $w^{-1}(\lambda+\rho)-\rho$ is {\it dominant}, i.e. such that $w^{-1}(\lambda+\rho)-w'(\lambda+\rho)$ is a sum of negative roots from $\Delta(\lambda)$ for any $w'\in W(\lambda)$. It is well known that in general $\EuScript O(\lambda)\ne\EuScript O(w\rho-\rho)$, but nevertheless one can compute $\EuScript O(\lambda)$ for a given triple $(w, W(\lambda), W)$.

To proceed further we need the notion of Springer correspondence. Namely,  one can attach to a nilpotent coadjoint orbit of $\EuScript O\subset\mathfrak g^*$ a simple module $\operatorname{Spr}(\EuScript O)$ over the Weyl group $W$ of $\mathfrak g$~\cite[Section 10.1]{CM}. This correspondence is injective~\cite[Section 10.1]{CM}, and for a simple module $E$ of $W$ we denote by $\EuScript O(E)$ the nilpotent coadjoint orbit $\EuScript O\subset\mathfrak g^*$ for which $\operatorname{Spr}(\EuScript O)=E$. Note that $\EuScript O(E)$ may not exist. We set $$\operatorname{Spr}_{\Delta}(w):=\operatorname{Spr}(\EuScript O(w\rho-\rho))$$where the subscript $\Delta$ keeps track of the Weyl group of which $w$ is an element.
The map $w\mapsto \operatorname{Spr}_{\Delta}(w)$ is essentially a combinatorial object, and one should be able to provide a combinatorial description of this map. In the case when $\mathfrak g$ is a classical finite-dimensional Lie algebra, this is done in~\cite{Barbasch-Vogan}.

We will use the following notation $$\operatorname{Irr}(W), \operatorname{Irr}(W(\lambda)), \operatorname{Irr}(W)^\dagger, \operatorname{Irr}(W(\lambda))^\dagger, a_E, b_E, j^W_{W(\lambda)}(E)$$
of~\cite{Lusztig} (note that $W(\lambda)$ is always a parahoric subgroup of $W$ and that this fact is needed to properly define $j^W_{W(\lambda)}(E)$).

The equality $\EuScript O(\lambda)=\EuScript O(j^W_{W(\lambda)}(\operatorname{Spr}_{\Delta(\lambda)}(w)))$ is a consequence of results of~\cite{Jo}, see also~\cite[Subection 7.6]{LO}.

Here is how to reduce the case of nonregular $\lambda+\rho$ to the regular case. Namely, assume that $\lambda+\rho$ is not regular. We say that $\lambda'+\rho$ is a {\it regularization} of $\lambda+\rho$ if $\lambda'+\rho$ is regular and

1) if $(\lambda+\rho, \alpha)\in\mathbb Z_{>0}$ then $(\lambda'+\rho, \alpha)\in\mathbb Z_{>0}$ for all $\alpha\in\Delta^+$,

2) if $(\lambda+\rho, \alpha)\in\mathbb Z_{\le 0}$ then $(\lambda'+\rho, \alpha)\in\mathbb Z_{<0}$ for all $\alpha\in\Delta^+$,

3) if $(\lambda+\rho, \alpha)\not\in\mathbb Z$ then $(\lambda'+\rho, \alpha)\not\in\mathbb Z$ for all $\alpha\in\Delta^+$.\\
If $\lambda'+\rho$ is a regularization of $\lambda+\rho,$ then $\EuScript O(\lambda')=\EuScript O(\lambda)$. Such a regularization always exists. For example if $N>\!\!>0$ then  $$\lambda'+\rho:=N(\lambda+\rho)+\rho$$ is a regularization of $\lambda+\rho$.

Finally, we reduce the case of semisimple Lie algebra $\mathfrak g$ to the case of simple Lie algebra $\mathfrak g$. Namely, we fix a decomposition $\mathfrak g=\oplus_i\mathfrak g_i$ for simple ideals $\mathfrak g_i$. Then $\EuScript O(\lambda)=\oplus_i\EuScript O(\lambda_i)$, where $\lambda_i$ is the orthogonal projection of $\lambda$ to $(\mathfrak h\cap\mathfrak g_i)^*$.

\subsection{Discussion of the algorithm}
In the next subsection we provide an explicit combinatorial algorithm which computes $\EuScript O(\lambda)$ for any weight $\lambda$ of a simple classical Lie algebra. We use the notation of Subsection~\ref{SSasv} and set $$\EuScript O^{B/C/D}(f):=\EuScript O(\lambda_f)$$ for the Lie algebras $\mathfrak{so}(2n+1), \mathfrak{sp}(2n), \mathfrak{so}(2n)$ respectively. In these cases one can attach to any nilpotent coadjoint orbit a partition formed by the sizes of Jordan blocks of any element $x$ in the orbit, where $x$ is considered as a linear operator, see~\cite{CM}. More precisely, given $\EuScript O^{B/C/D}(f)\subset\mathfrak g^{B/C/D}(n)$ we denote the above partition by $p^{B/C/D}(f)$. Clearly, $|p^B(f)|=2n+1$ and $|p^{C/D}(f)|=2n$.

The simple modules of $W=W^{B/C/D}(n)$ are parametrized by pairs of partitions~\cite{Barbasch-Vogan}. In~\cite{Cr} one can find a description of the Springer correspondence at the level of partitions, see also~\cite{Barbasch-Vogan}. In our notation this correspondence can be written in the following way:
$$\begin{array}{ccc}\operatorname{Spr}(f)\leftrightarrow&(p^B(f)^o, p^B(f)^e)&\mbox{in the $B$-case},\\
\operatorname{Spr}(f)\leftrightarrow&(p^C(f)^e, p^C(f)^o)&\mbox{in the $C$-case},\\
\operatorname{Spr}(f)\leftrightarrow&(p^D(f)^o, p^D(f)^e)&\mbox{in the $D$-case},\\
\end{array}$$
see~\cite[p.~165]{Barbasch-Vogan}. Moreover, if $\operatorname{Spr}(f)\leftrightarrow(\alpha, \beta)$ then
$$\begin{array}{ccc}p^B(f)=&\langle\beta, \alpha\rangle_{-1},\\
p^C(f)=&\langle\alpha, \beta\rangle_{1},\\
p^D(f)=&\langle\beta, \alpha\rangle_{0}.\end{array}$$

We set $\Delta(f):=\Delta(\lambda_f), W(f):=W(\lambda_f)$. The following should be considered as the scheme of the algorithm we aim at.

Find $W(f)$ and decompose it as the direct product $W_1\times W_2\times...$ of Weyl groups of simple root systems. Fix a regularization $\lambda'$ of $\lambda_f$. Find the unique element $w\in W(f)$ such that $$w^{-1}(\lambda'+\rho)-w'(\lambda'+\rho)$$ is a sum of negative roots from $\Delta(f)$ for any $w'\in W(f)$. Record $w$ as $$(w_1, w_2,...)\in W_1\times W_2\times....$$ Attach to $w_i$ the $W_i$-module $\operatorname{Spr}_{\Delta_i}(w_i)$. Compute $$E(f):=j_{W(f)}^W(E_{int}(f)),$$ where $E_{int}(f):=\otimes_i \operatorname{Spr}_{\Delta_i}(w_i)$. Then $\EuScript O(f)=\EuScript O(E(f))$. Denote the partition assigned to $\EuScript O(f)$ by $\operatorname{RS}^{B/C/D}_L(f)$.

This scheme translates into the following mnemonic algorithm.

Step 1(mnemonic). Add $\rho$ to $\lambda_f$.

Step 2(mnemonic). Determine the factors $W_1, W_2,...$ of $W(f)$ arising from the simple components of the root system $\Delta(f)$.

Step 3(mnemonic). Find a regularization $\lambda'+\rho$ of $\lambda_f+\rho$ and the element $w=(w_1, w_2,...)\in W(f)$ corresponding to $\lambda'$.

Step 4(mnemonic). To each $w_i$, assign a partition (in the $A$-case) or a pair of partitions (in the $B/C/D$-cases) as it is done in~\cite[Proposition 17]{Barbasch-Vogan}.

Step 5(mnemonic). Note that the datum assigned to $w_i$ in Step 4 corresponds naturally to the simple $W_i$-module $E_i:=\operatorname{Spr}_{\Delta_i}(w_i)$. Then, using~\cite{Lusztig}, compute the pair of partitions corresponding to the $W$-module $E(f):=j^W_{W(f)}(E_{int}(f))$. Finally, compute $\operatorname{RS}^{B/C/D}_L(f)=\EuScript O(E(f))$ using the Springer correspondence.

\subsection{The algorithm for $\mathfrak g^{B/C/D}(n)$}\label{SSrsa}

We now describe the precise  algorithm which computes $p^{B/C/D}(f)$. This is a compilation of several works~\cite{Jo, Lusztig, Barbasch-Vogan}.
Let $f\in\mathbb F^n$.


Step 1. Set
\begin{center}$\begin{tabular}{ll}$f^+:= (f(1)+\frac{2n-1}2, f(2)+\frac{2n-3}2,..., f(n)+\frac12)$& for the $B$-case,\\
$f^+:= (f(1)+n, f(2)+(n-1),..., f(n)+1)$& for the $C$-case,\\
$f^+:= (f(1)+(n-1), f(2)+(n-2),..., f(n)+0)$& for the $D$-case.\end{tabular}$\end{center}
Define the function $f^+: \{\pm1,...,\pm n\}\to\mathbb F$ by setting $f^+(-i):=-f^+(i)$ for\\ $i\in\{1,..., n\}.$

Step 2. Consider the set $\{1,..., n, -n,..., -1\}$ with linear order
\begin{equation}1\prec 2\prec 3\prec....\prec (n-1)\prec n\prec -n\prec -(n-1)\prec...\prec-3\prec-2\prec -1.\label{ES1n}\end{equation}
Put $f(-i):=-f(i)$ for $i\in\{1,..., n\}$, and introduce an equivalence relation $\sim$ on $\{\pm1,..., \pm n\}$:
\begin{center}$i\sim j$ if and only if $f(i)-f(j)\in\mathbb Z$.\end{center}
Denote the equivalence classes by $[\sim]_i$, and let $-[\sim]_i$ be the class with all signs reversed. 
Next, relabel the equivalence classes $[\sim]_i$ so that $[\sim]_1=\{i\mid f(i)\in\mathbb Z\}$, $[\sim]_2=\{i\mid f(i)\in\mathbb Z+\frac12\}$, and the equality $[\sim]_{2i+1}=-[\sim]_{2i+2}$ holds for $i\ge1$.
Let $n_i$ be cardinality of the equivalence class $[\sim]_i$\footnote{Note that $n_1, n_2$ might be equal to 0, and that $t$ is necessarily even.}.


Step 3. For every $i$ we introduce the following linear order $\triangleright$ on $[\sim]_i$. For $m\prec k\in [\sim]_i$, we 
set

(1) $m\triangleright k$ if and only if $f^+(m)-f^+(k)\in\mathbb Z_{>0}$,

(2) $k\triangleright m$ if and only if $f^+(m)-f^+(k)\in\mathbb Z_{\le0}$.

If we are in the $C$-case and $i=2$ or we are in the $D$-case and $i=1, 2$, we further modify the order $\triangleright$ as follows.
Consider the smallest possible value $v$ of $|f^+|$ on $[\sim]_i$, together with 
its preimage $$|f^+|^{-1}(v):=\{x\in [\sim]_i\mid |f^+(x)|=v\}$$ in $[\sim]_i$. For the $\prec$-maximal element $m$ of this preimage define $m\triangleright -m$. One can check that this yields a well-defined linear order on $\triangleright$ on the equivalence class $[\sim]_i$.




Step 4. Consider each equivalence class $[\sim]_i$ as a subsequence of~(\ref{ES1n}) together with the linear order $\triangleright$ from Step 4, and apply the Robinson-Schensted algorithm to $[\sim]_i$. The output is a pair of semistandard tableaux of the same shape, and this shape determines a partition $p_i$ of $n_i$ ($i\le t$).

Step 5. Set $\operatorname{RS}^{B/C/D}(f):=($B/C/D$)(p_1, p_2, p_{\ge3})$, where $$p_{\ge3}:=p_4\widehat+p_6\widehat+...\widehat+p_t.$$

\begin{proposition}\label{Ppart} Let $f\in\mathbb F^n$ be a function. Then $p^{B/C/D}(f)=\operatorname{RS}^{B/C/D}(f)$.\end{proposition}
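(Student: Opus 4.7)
The plan is to verify that each of the five mnemonic steps in the explicit algorithm of Subsection~\ref{SSrsa} implements, at the level of partitions, the corresponding step of the abstract scheme recalled in Subsection~\ref{SSasv} (regularization, decomposition of $W(\lambda)$, Barbasch--Vogan's Robinson--Schensted rule, Lusztig's $j$-induction, and inverse Springer correspondence). Once this is established, the equality $p^{B/C/D}(f)=\operatorname{RS}^{B/C/D}(f)$ is immediate from the main theorem of Joseph combined with the results of Barbasch--Vogan and Lusztig cited in the previous subsection.

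First, I would note that Step 1 is just the passage from $\lambda_f$ to $\lambda_f+\rho$, the extension $f^+(-i):=-f^+(i)$ being forced by the way $\rho$ acts on negative roots in types $B/C/D$. Next, I would identify Step 2 with the decomposition of the integral Weyl group $W(f)=W(\lambda_f)$ into simple factors. The key observation is that $\alpha=\varepsilon_i\pm\varepsilon_j$ is integral on $\lambda_f$ precisely when $f(i)\mp f(j)\in\mathbb Z$, and that the sign flip $f(-i)=-f(i)$ reduces both possibilities to the single relation $f(i)-f(j)\in\mathbb Z$ on the symmetrized index set. The classes $[\sim]_1$ and $[\sim]_2$ are exactly those stable under negation, and hence they produce the type $B/C/D$ simple factors of $W(f)$; the remaining classes come in pairs $[\sim]_{2i+1}=-[\sim]_{2i+2}$ producing type $A$ factors. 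For Step 3, I would check that the order $\triangleright$ picks out the unique element $w\in W(f)$ for which $w^{-1}(\lambda'+\rho)$ is dominant with respect to $W(f)$ after an admissible regularization $\lambda'$; the subtle modification in the $C$-case for $i=2$ and the $D$-case for $i=1,2$ encodes the sign ambiguity that distinguishes the Weyl groups of types $B/C$ and $D$ from the symmetric group.

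For Step 4 I would invoke directly Proposition~17 of~\cite{Barbasch-Vogan}: the Robinson--Schensted algorithm applied to the one-line notation of $w$ (with the order $\triangleright$ recording how the regularized weight falls) computes, factor by factor, the pair (or single) partition parametrizing $\operatorname{Spr}_{\Delta_i}(w_i)$ under the Springer correspondence for the factor $W_i$. The main work is in Step 5: here I need to verify that the operations $p^{Le}, p^{Lo}, \star^e_{\Delta m}, \star^o_{\Delta m}, \widehat{+}, \langle\cdot,\cdot\rangle_{\Delta m}$ defined in Subsection~\ref{SScp} reproduce, at the level of pairs of partitions, Lusztig's formulas for $j^W_{W(f)}$ followed by the inverse Springer correspondence for the classical groups. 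Concretely, the splittings $p^e, p^o$ (and $p^{Le}, p^{Lo}$) correspond to separating even and odd entries of the Lusztig $z$-symbol, $\star^{e/o}$ implements the combinatorial addition of symbols that underlies $j$-induction for a type $A$ integral factor glued onto a type $B/C/D$ factor, and $\widehat{+}$ is Lusztig's symbol addition for two type $B/C/D$ symbols (recorded in conjugate form to match Springer). The three functions $B, C, D$ then assemble the data of $(p_1,p_2,p_{\ge3})$ into the single symbol of the induced representation; applying $\langle\cdot,\cdot\rangle_{\Delta m}$ with the correct shift ($-1$, $1$, $0$ respectively) passes from the symbol back to the Jordan-type partition via the Springer correspondence formula recorded just before Proposition~\ref{Ppart}.

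The main obstacle is entirely in this last step: one must carefully line up the combinatorial conventions (the shift $\Delta m$, the even/odd splitting, and the choice of $(\alpha,\beta)$ versus $(\beta,\alpha)$ under the Springer correspondence in each of the three cases) with Lusztig's explicit $j$-induction tables. I expect to handle this by an induction on the number of type $A$ factors of $W(f)$: the base case (no type $A$ factors, i.e.\ $f$ integral or half-integral and locally with no repetitions) reduces to the classical Springer correspondence verified by the shift lemma of Subsection~\ref{SScp}; the inductive step corresponds to $j$-inducing one type $A$ factor at a time, which is precisely one application of $\star^{e/o}_{\Delta m}$. Modulo this bookkeeping the whole proposition is a translation of known results, so I will treat it in the style of the proof of the $A$-case analogue in~\cite{PP2}, to which I can refer for the type $A$ ingredient.
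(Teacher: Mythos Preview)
Your approach is essentially the same as the paper's: both proofs proceed by matching the five steps of the combinatorial algorithm to the five steps of the mnemonic scheme and then appealing to \cite{Barbasch-Vogan} and \cite{Lusztig} for the substantive content. The paper's proof is in fact terser than yours: for Step~5 it simply asserts that the pair of partitions attached to $j_{W(f)}^W(E_{int}(f))$ is the pair appearing inside the $B/C/D$ formula (``This follows from~\cite{Lusztig}''), and that the outer bracket $\langle\cdot,\cdot\rangle_{\Delta m}$ implements the Springer correspondence (from~\cite{Barbasch-Vogan}), without unpacking the roles of $\star^{e/o}$ and $\widehat{+}$ separately.

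One caution about your more detailed plan for Step~5: your proposed induction on the number of type~$A$ factors, with each inductive step realized by one application of $\star^{e/o}_{\Delta m}$, does not quite match the structure of the $B/C/D$ functions. In those formulas the $\star$ operations act on $p_1^{Le},p_1^{Lo}$ and on $p_2^{Le},p_2^{Lo}$, i.e.\ internally on the two $B/C/D$-type factors, while the type~$A$ contributions enter through $p_{\ge3}=p_4\widehat{+}p_6\widehat{+}\cdots$ and the operation $\widehat{+}$; so the decomposition ``base case = no type $A$ factors, inductive step = one more $\star$'' is not the right bookkeeping. This does not affect correctness, since the paper treats all of Step~5 as a single citation to Lusztig's tables, but if you intend to spell it out you should reorganize the combinatorics accordingly.
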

\begin{proof} The left-hand side of our asserted equality is computed by the mnemonic algorithm described above. The right-hand side is computed by the combinatorial algorithm following the mnemonic algorithm. Therefore it is enough to compare the two algorithms. To start, we need an explicit description of the Weyl group $W^{B/C/D}(n)$. Namely, we identify $W:=W^{B/C/D}(n)$ with a subgroup of the group $Perm(\{\pm \varepsilon_1, \pm \varepsilon_2,..., \pm \varepsilon_n\})$ of permutations of $\{\pm \varepsilon_1, \pm \varepsilon_2,..., \pm \varepsilon_n\}$. More precisely, $W^B(n)$ and $W^C(n)$ are identified with the subgroup\begin{center}$\{w\in Perm(\{\pm\varepsilon_1,...,\pm \varepsilon_n\})\mid \forall i\exists j: w\{\varepsilon_i, -\varepsilon_i\}=\{\varepsilon_j, -\varepsilon_j\}\}$,\end{center} and $W^D(n)$ is identified with the subgroup of even permutations in $$W^C(n)=W^B(n).$$

Next we describe the integral Weyl group $W(f)$ of $\lambda_f$ in terms of the equivalence classes $[\sim]_i$ of Step 2. Namely, we have
$$\begin{tabular}{l}$W^B(f)\cong W^B(\frac{n_1}2)\times W^B(\frac{n_2}2)\times S_{n_4}\times S_{n_6}...\times S_{n_t}$,\\
$W^C(f)\cong W^C(\frac{n_1}2)\times W^D(\frac{n_2}2)\times S_{n_4}\times S_{n_6}...\times S_{n_t}$,\\
$W^D(f)\cong W^D(\frac{n_1}2)\times W^D(\frac{n_2}2)\times S_{n_4}\times S_{n_6}...\times S_{n_t}$,\end{tabular}$$
where

$\bullet$ the first factor is the subgroup of $W^{B/C/D}(n)$ which  keeps $[\sim]_i$ pointwise fixed for $i\ne 1$,

$\bullet$ the second factor is the subgroup [of even permutations in the $C$-case] of $W^{B/C/D}(n)$ which  keeps $[\sim]_i$ pointwise fixed for $i\ne 2$,

$\bullet$ the $i$-th factor ($i\ge3$) is the subgroup of $W^{B/C/D}(n)$ which keeps $[\sim]_j$ pointwise fixed for $j\ne 2i+1, 2i+2$.\\
Since the integers $n_1, n_2, ...$ are computed in Step 2 of the combinatorial algorithm, we see that Step 1 and Step 2 of both algorithms match each other.

According to the mnemonic algorithm, next we have to find a regularization $(\lambda'+\rho)$ of $(\lambda+\rho)$, and then find elements $w_i$ of the $i$-th factor of $W(f)$. It is easy to check that the $w_i$-s are determined by the scalar products $(\alpha, \lambda_f)$ for $\alpha\in\Delta(\lambda_f)$, and that these scalar products are in turn determined by the order $\triangleright$ introduced in Step 3 of the combinatorial algorithm. Thus the order $\triangleright$ encodes the elements $w_i$.

To compare Step 4 of the two algorithms, we have to ensure that Step 4 of the combinatorial algorithm implements correctly Step 4 of the mnemonic algorithm. This is accomplished by a careful reading of~\cite[Section: The Robinson-Schensted Algorithm for Classical Groups]{Barbasch-Vogan}.

It remains to compare Steps 5 of the two algorithms. The pair of partitions attached to the $W$-module $j_{W(f)}^W(E_{inf}(f))$ is the pair of partitions in the respective formula for the B/C/D-functions. This follows from~\cite{Lusztig}. The fact that the functions B/C/D compute correctly the partition attached to the orbit $\mathcal O(f)$ follows from the combinatorial description of the Springer correspondence for classical groups given in~\cite{Barbasch-Vogan}. Note that the functions B/C/D combine these two procedures in one formula.

\end{proof}

\subsection{Estimates on the corank of a partition}\label{SSSrrsa} Let $x\in\EuScript O(f)$. By identifying $\mathfrak g(n)$ with $\mathfrak g(n)^*$ we consider $x$ as a linear operator in a natural $\mathfrak g(n)$-module. Therefore we can define the {\it corank of} $x$ as the corank of the respective operator. For $x\in\EuScript O(f)$, the corank of $x$ is independent on $x$ and equals $\sharp p(f)$. 

\begin{lemma}\label{Lfcrk}Let $l$ be the length of a longest strictly decreasing subsequence of $f^+$. Then $$|\sharp p(f)-l|\le 5+1.$$\end{lemma}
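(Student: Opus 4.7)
The plan is to combine the combinatorial algorithm of Subsection~\ref{SSrsa} with the partition estimates in Subsection~\ref{SSSin}. By Proposition~\ref{Ppart} we have $p(f)=\operatorname{RS}^{B/C/D}(f)=(B/C/D)(p_1,p_2,p_{\ge 3})$, where each $p_i$ is the shape produced by the Robinson--Schensted insertion applied to the equivalence class $[\sim]_i$ equipped with the total order $\triangleright$. As a first step, I would apply inequalities B2, C2, D2 of Subsection~\ref{SSSin}, which immediately give
$$\bigl|\sharp p(f)-\max(\sharp p_1,\sharp p_2,\sharp p_{\ge 3})\bigr|\le 5,$$
reducing the lemma to the estimate $|\max_i\sharp p_i-l|\le 1$.

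Next I would use Schensted's theorem to identify $\sharp p_i$ with the length of the longest strictly $\triangleright$-decreasing subsequence of $[\sim]_i$ read in the $\prec$ order. On each class, $\triangleright$ is defined by the sign of integer differences of $f^+$-values and thus agrees with the $f^+$-order, with the one exception that in the special classes ($i=2$ in the $C$-case, $i=1,2$ in the $D$-case) the relative $\triangleright$-position of a single pair $\{m,-m\}$ is flipped in Step~3; such a flip can change the length of the longest strictly decreasing subsequence by at most $1$. Writing $\ell_i$ for the length of the longest strictly $f^+$-decreasing subsequence of $[\sim]_i$ in the $\prec$ order, I thus obtain $|\sharp p_i-\ell_i|\le 1$ in each case.

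Finally, I would relate $\max_i\ell_i$ to $l$. The classes $[\sim]_i$ partition $\{\pm 1,\dots,\pm n\}$ into blocks whose $f^+$-values lie in distinct cosets of $\mathbb Z$ in $\mathbb F$, so any strictly $f^+$-decreasing subsequence of the entire sequence restricts to such a subsequence within each class. Combined with the antisymmetry $f^+(-i)=-f^+(i)$ and the pairing $[\sim]_{2k+1}=-[\sim]_{2k+2}$ for $k\ge 1$, this lets one argue that a globally longest strictly decreasing subsequence can be confined to a single class at the cost of changing its length by at most $1$, yielding $|l-\max_i\ell_i|\le 1$. Putting the three estimates together gives $|\sharp p(f)-l|\le 5+1$, as required.

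The hardest step will be the last one. The naive bound $l\le\sum_i\ell_i$ is immediate, but what is needed is the opposite: reducing a globally long decreasing subsequence to one living in a single class up to a bounded discrepancy. The saving grace is the antisymmetry of $f^+$ together with the paired structure of the non-special equivalence classes, which forces the contributions from the two members of any such pair to balance out and so caps the total decrease at the single-class maximum up to a bounded constant.
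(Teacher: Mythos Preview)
Your overall structure matches the paper's: apply B2/C2/D2 to reduce to comparing $\max_i\sharp p_i$ with $l$, and use Schensted's theorem to interpret each $\sharp p_i$ as the length of a longest $\triangleright$-decreasing subsequence of $[\sim]_i$. The discrepancy of at most $1$ coming from the single flipped pair in Step~3 is also exactly what the paper uses.

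The gap is in your third step, and it costs you both the correct interpretation and the correct arithmetic. In the paper, ``strictly decreasing subsequence of $f^+$'' means that successive differences lie in $\mathbb Z_{>0}$; since the ground field $\mathbb F$ is merely algebraically closed of characteristic $0$, no other ordering is available. Under this reading, any strictly decreasing subsequence of $f^+$ is automatically contained in a single equivalence class $[\sim]_i$ (because $f^+(i)-f^+(j)\in\mathbb Z$ iff $i\sim j$), and hence
\[
l=\max_i\ell_i
\]
on the nose. Your attempt to bound $|l-\max_i\ell_i|$ by $1$ via antisymmetry and the pairing of classes is therefore unnecessary; and under the ``real-order'' interpretation you seem to have in mind, that bound is actually false (one can interleave integer and half-integer values to make $l-\max_i\ell_i$ as large as one likes). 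Moreover, even if both of your separate estimates $|\sharp p_i-\ell_i|\le 1$ and $|l-\max_i\ell_i|\le 1$ held, the triangle inequality would only give $|\sharp p(f)-l|\le 5+1+1=7$, not $5+1$. The paper avoids this by observing that the \emph{only} source of discrepancy between $l$ and $\max_i\sharp p_i$ is the Step~3 modification, yielding $|\max_i\sharp p_i-l|\le 1$ directly; combined with B2/C2/D2 this gives $5+1$.
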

\begin{proof}
It is known that, for each $i$, $\sharp p_i=\sharp p([\sim]_i)$ equals the length of a longest strictly decreasing subsequence of elements in $[\sim]_i$~\cite[p. 69, Ex. 7]{Knu}. A longest strictly decreasing subsequence of $f^+$ 
could be shorter by 1 than a longest strictly decreasing subsequence of elements of $[\sim]_i$ with respect to order $\triangleleft$: this is due to the exceptions in Step 3 for the $C/D$-cases. As a result, we have $$|\max(\sharp p_1, \sharp p_2, \sharp p_{\ge3})-l|=|\max(\sharp p_1, \sharp p_2, \sharp p_3,..., \sharp p_t)-l|\le1.$$
Combining this inequality with the inequalities B2/C2/D2 of~\ref{SSSin}, and recalling that $p(f)=(B/C/D)(p_1, p_2, p_{\ge3})$, we finish the proof of Lemma~\ref{Lfcrk}.
\end{proof}

\subsection{Proofs of Theorems~\ref{1Tbcd},~\ref{2T} and Proposition~\ref{Pp}}\label{Spr1tb}
These proofs are very similar to the proofs of corresponding statements in the A-case~\cite[Theorems 3.1, 3.2 and Proposition 3.3]{PP2}.
In particular, the proofs of Theorem~\ref{2T} and Proposition~\ref{Pp} coincide verbatim with the respective proofs of~\cite[Theorem~3.2]{PP2} and~\cite[Proposition~3.3]{PP2} modulo exchange of notation and replacing~\cite[Proposition 2.10]{PP2} by Proposition~\ref{Lexpl}.

We split the proof of Theorem~\ref{1Tbcd} into two parts:

a) if $\operatorname{Ann}_{\operatorname{U}(\mathfrak g(\infty))} \operatorname{L}_\mathfrak b(f) \ne 0$, then $f$ satisfies conditions (1) and (2) of Theorem~\ref{1Tbcd};

b) if $f$ satisfies conditions (1) and (2) of Theorem~\ref{1Tbcd}, then $$\operatorname{Ann}_{\operatorname{U}(\mathfrak g(\infty))} \operatorname{L}_\mathfrak b(f) \ne 0.$$\\
Furthermore, the proof of part a) can be broken down into the proofs of the following 3 statements:

$a1)$ Let $f\in\mathbb F^{\mathbb Z_{>0}}$. If $\operatorname{I}(f)\ne0$, then $|f|<\infty$.

$a2)$ Let $f\in\mathbb F^{\mathbb Z_{>0}}$ . If $\operatorname{I}(f)\ne0$, then $f$ is almost integral or almost half-integral.

$a3)$ Let $f\in\mathbb F^{\mathbb Z_{>0}}$ . If $\operatorname{I}(f)\ne0$, then $f$ is locally constant with
respect to linear order $\prec$.\\
Statement a1) coincides with Proposition~\ref{Papp} above, of which we provided a complete proof. The proofs of a2) and a3) follow very closely the respective proofs of Proposition~13 and~14 of~\cite{PP2}, where instead of~\cite[Lemma~18]{PP2} one has to use Lemma~\ref{Lfcrk}.

The proof of part b) is the proof of~\cite[Theorem 3.1 b)]{PP2} verbatim modulo the new $B/C/D$-notation, except in the case when $\mathfrak g(\infty)=\mathfrak{sp}(\infty)$ and $f$ is half-integral.

We now consider this latter case. Let $\mathfrak g(\infty)=\mathfrak g^C(\infty)=\mathfrak{sp}(\infty)$, and let $f_{\delta}$ to be a function such that $f_\delta(i)=\frac12$ for all $i\in\mathbb Z_{>0}$. One can check directly that $\operatorname{I}_\mathfrak b(f_\delta)=\operatorname{Ann}_{\operatorname{U}(\mathfrak{sp}(\infty))}\operatorname{L}_\mathfrak b(f_\delta)$

$\bullet$ does not depend on a choice of a splitting Borel subalgebra $\mathfrak b$,

$\bullet$ equals the kernel $I_W$ of the natural map $\operatorname{U}(\mathfrak{sp}(\infty))\to \mbox{Weyl}(\infty)$, where

Weyl$(\infty)$ is the Weyl algebra of $V(\infty)$ defined by the skew-symmetric form

of $V(\infty)$.\\
Since $f$ is half-integral, it is clear that $f-f_{\delta}$ is an almost integral function, and thus $\operatorname{L}_\mathfrak b(f-f_\delta)$ is annihilated by some proper integrable ideal $I$. Next, we observe that $\operatorname{L}_\mathfrak b(f)$ is a subquotient of $\operatorname{L}_\mathfrak b(f-f_\delta)\otimes \operatorname{L}_\mathfrak b(f_\delta)$ and thus
$$\operatorname{Ann}_{\operatorname{U}(\mathfrak g(\infty))} (\operatorname{L}_\mathfrak b(f-f_\delta)\otimes \operatorname{L}(f_\delta))\subset \operatorname{Ann}_{\operatorname{U}(\mathfrak g(\infty))} \operatorname{L}_\mathfrak b(f).$$In particular, if the left-hand side ideal is nonzero then the right-hand side ideal is also nonzero.

Now we prove that $$\operatorname{Ann}_{\operatorname{U}(\mathfrak g(\infty))} (\operatorname{L}_\mathfrak b(f-f_\delta)\otimes \operatorname{L}(f_\delta))\ne 0.$$
For this, we show  using Lemma~\ref{Ltd} 
that $\operatorname{Ann}_{\operatorname{U}(\mathfrak g(\infty))} (\operatorname{L}_\mathfrak b(f-f_\delta)\otimes \operatorname{L}(f_\delta))=D_\mathfrak o^\mathfrak{sp}I$ for some nonzero ideal $I$ of $\operatorname{U}(\mathfrak o(\infty))$. As $\operatorname{Ann}_{\operatorname{U}(\mathfrak g(\infty))} (\operatorname{L}_\mathfrak b(f-f_\delta)$) is an integrable ideal,
$$\operatorname{Ann}_{\operatorname{U}(\mathfrak g(\infty))} (\operatorname{L}_\mathfrak b(f-f_\delta))=D_\mathfrak o^\mathfrak{sp}I'$$for some integrable ideal $I'$ of $\operatorname{U}(\mathfrak o(\infty))$. One the other hand,$$\operatorname{Ann}_{\operatorname{U}(\mathfrak g(\infty))} (\operatorname{L}_\mathfrak b(f_\delta)=D_\mathfrak o^{\mathfrak{sp}}\operatorname{Ann}_{\operatorname{U}(\mathfrak o(\infty))} \mbox{SW}.$$ Hence, indeed $$\operatorname{Ann}_{\operatorname{U}(\mathfrak g(\infty))} (\operatorname{L}_\mathfrak b(f-f_\delta)\otimes \operatorname{L}(f_\delta))=D_\mathfrak o^{\mathfrak{sp}}I$$for some nonzero integrable ideal $I$ in $\operatorname{U}(\mathfrak o(\infty))$.

\section{Integrable and semiintegrable ideals are radical}
One can define the {\it radical} $\sqrt I$ of an ideal $I$ by one of the following requirements:

1. $\sqrt I$ is the intersection of all primitive ideals which contain $I$,

2. $\sqrt I$ is the intersection of all prime ideals which contain $I$,

3. $\sqrt I$ is the sum of all ideals $J$ such that $J^n\subset I$ for some $n$.
\begin{proposition}\label{P2}If $\mathfrak g(\infty)=\mathfrak{sl}(\infty), \mathfrak o(\infty), \mathfrak{sp}(\infty)$ and $I\subset\operatorname{U}(\mathfrak g(\infty))$ is an integrable ideal, then definitions 1, 2, 3 are equivalent, and moreover $I=\sqrt I$.\end{proposition}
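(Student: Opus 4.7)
The plan is to show that $I$ is an intersection of finitely many primitive ideals, after which the equivalence of the three definitions of radical and the equality $I = \sqrt{I}$ will follow from a standard chain of inclusions.

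First I would invoke the structural machinery of Section~4. Since $I$ is integrable, by the lemma characterizing integrable ideals as images of c.l.s. under $Q \mapsto \operatorname{I}(Q)$, we may write $I = \operatorname{I}(Q)$ for some c.l.s.\ $Q$. Zhilinskii's theorem, quoted at the start of Subsection~4.2, asserts that $Q$ is a finite union of irreducible c.l.s., say $Q = Q_1 \cup \cdots \cup Q_m$. Unwinding the definition $\operatorname{I}(Q) = \bigcup_n \bigcap_{V \in Q_n} \operatorname{Ann}_{\operatorname{U}(\mathfrak g(n))} V$ together with $(Q)_n = (Q_1)_n \cup \cdots \cup (Q_m)_n$, and using that the ideals $A_i(n) := \bigcap_{V \in (Q_i)_n} \operatorname{Ann}_{\operatorname{U}(\mathfrak g(n))}V$ are increasing in $n$ when viewed inside $\operatorname{U}(\mathfrak g(\infty))$ (which follows from complete reducibility of finite-dimensional $\mathfrak g(n)$-modules), one checks that $I = \operatorname{I}(Q_1) \cap \cdots \cap \operatorname{I}(Q_m)$. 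By Proposition~\ref{P44}(a) each $\operatorname{I}(Q_i)$ is primitive. Hence $I$ is a finite intersection of primitive ideals.

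Next I would record the chain of inclusions that holds for any ideal in any associative ring. Denote the three candidate radicals from the statement by $R_1, R_2, R_3$ in the order listed. One has $I \subseteq R_3 \subseteq R_2 \subseteq R_1$: the first inclusion is trivial (take $J = I$, $n = 1$); for the second, if $J^n \subseteq I \subseteq P$ with $P$ prime then $J \subseteq P$, and summing over all such $J$ gives $R_3 \subseteq R_2$; for the third, every primitive ideal is prime, so the set of primes containing $I$ is at least as large as the set of primitives containing $I$, and intersecting over a larger set yields a smaller ideal. Combining this with the previous step, if $I = P_1 \cap \cdots \cap P_m$ with each $P_i$ primitive, then every $P_i$ appears in the intersection defining $R_1$, so $R_1 \subseteq P_1 \cap \cdots \cap P_m = I$. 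The resulting sandwich $I \subseteq R_3 \subseteq R_2 \subseteq R_1 \subseteq I$ forces all four to coincide, establishing simultaneously the equivalence of the three definitions on integrable ideals and the equality $I = \sqrt{I}$.

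I do not anticipate any serious obstacle: the only substantive input is the finite decomposition of a c.l.s.\ into irreducible ones (Zhilinskii) together with Proposition~\ref{P44}(a), both of which are already in place by the time the proposition is stated. Everything else is purely formal, and so the entire argument reduces to the observation that for integrable ideals the a priori very different notions of radical all collapse because $I$ itself is already an intersection of finitely many primitive ideals.
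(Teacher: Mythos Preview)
Your proof is correct. For definitions 1 and 2 it runs along the same lines as the paper's: both decompose $I$ as a finite intersection of primitive ideals via Zhilinskii's decomposition of a c.l.s.\ into irreducible ones together with Proposition~\ref{P44}. The difference lies in the treatment of definition 3. The paper handles it by a local argument, showing that $I\cap\operatorname{U}(\mathfrak g')$ is radical in $\operatorname{U}(\mathfrak g')$ for every finite-dimensional subalgebra $\mathfrak g'\subset\mathfrak g(\infty)$, using that the restriction of an integrable ideal remains integrable and hence is an intersection of primes. You instead extract everything from the formal chain $I\subseteq R_3\subseteq R_2\subseteq R_1\subseteq I$, which collapses once $I$ is known to be a finite intersection of primitives. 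Your route is shorter and avoids the passage to finite-dimensional subalgebras; the paper's route, on the other hand, yields the intermediate fact that $I$ is \emph{locally} radical, i.e.\ $I\cap\operatorname{U}(\mathfrak g(S))=\sqrt{I\cap\operatorname{U}(\mathfrak g(S))}$ for each finite $S$, and this local statement is reused (and is essential) in the proof of Proposition~\ref{P2X} for semiintegrable ideals of $\operatorname{U}(\mathfrak{sp}(\infty))$.
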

\begin{proof}Any integrable ideal is an intersection of finitely many prime integrable ideals, and any prime integrable ideal is primitive, see Proposition~\ref{P44} b). This shows that $I=\sqrt I$ with respect to definitions 1 and 2.

To prove that $I=\sqrt I$ with respect to definition 3, it is enough to show that $I\cap\operatorname{U}(\mathfrak g')=\sqrt{I\cap\operatorname{U}(\mathfrak g')}$ for any finite-dimensional subalgebra $\mathfrak g'\subset\mathfrak g(\infty)$. The last statement follows from the fact that the ideal $I\cap\operatorname{U}(\mathfrak g')$ is an intersection of prime ideals as it is integrable.\end{proof}
For $\mathfrak g(\infty)=\mathfrak{sp}(\infty)$ we have a slightly more general statement. We define an ideal $I\subset\operatorname{U}(\mathfrak{sp}(\infty))$ to be {\it semiintegrable} if $I$ is in the image of the lattice of integrable ideals in $\operatorname{U}(\mathfrak o(\infty))$ under the isomorphism of lattices constructed in the proof of Theorem~\ref{Tilosp}. 
A semiintegrable ideal may be integrable.

\begin{proposition}\label{P2X}If $\mathfrak g(\infty)=\mathfrak{sp}(\infty)$, $I\subset\operatorname{U}(\mathfrak g(\infty))$ is a semiintegrable ideal, and $\mathbb F$ is uncountable, then definitions 1, 2, 3 are equivalent, and moreover $I=\sqrt I$.\end{proposition}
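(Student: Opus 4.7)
The strategy is to transport the problem across the $\mathfrak{osp}$-duality lattice isomorphism $D_\mathfrak o^\mathfrak{sp}$ of Theorem~\ref{Tilosp} and apply Proposition~\ref{P2} to the corresponding integrable ideal in $\operatorname{U}(\mathfrak o(\infty))$. By the definition of semiintegrable, write $I=D_\mathfrak o^\mathfrak{sp}(J)$ for some integrable ideal $J\subset\operatorname{U}(\mathfrak o(\infty))$. Applying Proposition~\ref{P2} to $J$, decompose
$$J=J_1\cap\cdots\cap J_s$$
as a finite intersection of prime integrable ideals. Since $D_\mathfrak o^\mathfrak{sp}$ preserves finite meets, this yields
$$I=D_\mathfrak o^\mathfrak{sp}(J_1)\cap\cdots\cap D_\mathfrak o^\mathfrak{sp}(J_s).$$

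The next step is to verify that each $D_\mathfrak o^\mathfrak{sp}(J_k)$ is primitive in $\operatorname{U}(\mathfrak{sp}(\infty))$. By Proposition~\ref{Pclsm}, $J_k$ is the annihilator of a simple $\mathfrak o(\infty)$-module of the form $V^{Y}\otimes({\bf \Lambda}^\cdot)^{\otimes v}\otimes(\operatorname{\bf S}^\cdot)^{\otimes w}$, possibly tensored with $\operatorname{Spin}$. The explicit description of $D_\mathfrak o^\mathfrak{sp}$ on these distinguished primitive ideals, stated immediately after Theorem~\ref{Tilosp}, identifies $D_\mathfrak o^\mathfrak{sp}(J_k)$ as the annihilator of the corresponding simple $\mathfrak{sp}(\infty)$-module $V^{Y'}\otimes({\bf \Lambda}^\cdot)^{\otimes w}\otimes(\operatorname{\bf S}^\cdot)^{\otimes v}$ (with $\operatorname{Spin}$ replaced by a Shale-Weil module). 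Each such ideal is therefore primitive, and in particular prime.

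Having established that $I$ is a finite intersection of primitive (and hence of prime) ideals of $\operatorname{U}(\mathfrak{sp}(\infty))$, we obtain $I=\sqrt I$ in senses $1$ and $2$ at once, since any primitive or prime ideal containing $I$ sits above the intersection decomposition and conversely the decomposition itself lies in the intersection defining $\sqrt I$. For sense $3$, we use that an intersection of prime ideals is automatically semiprime: if $K^n\subset I\subset D_\mathfrak o^\mathfrak{sp}(J_k)$ then $K\subset D_\mathfrak o^\mathfrak{sp}(J_k)$ by primeness, and intersecting over $k$ gives $K\subset I$. Thus the sum defining $\sqrt I$ in sense $3$ lies inside $I$, and the reverse inclusion is trivial.

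The principal technical point is the second step, where the primitivity of $D_\mathfrak o^\mathfrak{sp}(J_k)$ in the case involving $\operatorname{Spin}$ and $\operatorname{SW}$ depends on the explicit form of the $\mathfrak{osp}$-duality isomorphism constructed in Appendix~B; verifying that the isomorphism does indeed send $\operatorname{Ann}(N_k)$ to $\operatorname{Ann}(M_k)$ on the nose (and not merely to a related ideal) is where care is required. The uncountability hypothesis on $\mathbb F$ plays an auxiliary role: it is needed if one prefers to verify the equality $I=\sqrt I$ in sense $3$ by the finite-dimensional reduction $I\cap\operatorname{U}(\mathfrak g')=\sqrt{I\cap\operatorname{U}(\mathfrak g')}$ used in Proposition~\ref{P2}, because the module $M_k$ tensored with $\operatorname{SW}$ restricts to summands of the form $F\otimes\operatorname{SW}_{\mathfrak g'}$, and Dixmier's result that over an uncountable ground field every prime ideal of $\operatorname{U}(\mathfrak g')$ is an intersection of primitive ideals is then used to reconcile the three definitions of the radical in $\operatorname{U}(\mathfrak g')$.
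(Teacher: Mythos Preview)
Your approach differs substantially from the paper's and contains a genuine gap. You assert that the $\mathfrak{sp}(\infty)$-module $V^{Y'}\otimes({\bf \Lambda}^\cdot)^{\otimes w}\otimes(\operatorname{\bf S}^\cdot)^{\otimes v}$ (possibly tensored with a Shale--Weil module) is simple, and deduce primitivity of $D_\mathfrak o^\mathfrak{sp}(J_k)$ from this. But these modules are \emph{not} simple: already ${\bf \Lambda}^\cdot=\bigoplus_p{\bf \Lambda}^p$ is infinitely reducible, and the tensor products appearing in Proposition~\ref{Pclsm} are not simple in general---that proposition only asserts that their \emph{annihilators} exhaust the prime integrable ideals. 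In the integrable case (no $\operatorname{Spin}$), primitivity of $D_\mathfrak o^\mathfrak{sp}(J_k)$ can still be salvaged via Proposition~\ref{P44}~b), since the image is prime and integrable. But when $J_k$ involves $\operatorname{Spin}$, $D_\mathfrak o^\mathfrak{sp}(J_k)$ is the annihilator of a nonsimple, nonintegrable module, and nothing in your argument establishes that it is primitive. Your treatment of sense~1 therefore collapses. One could still recover senses~2 and~3 by observing that the T-algebra isomorphism of Appendix~B preserves products of ideals and hence primeness directly---but you do not make this argument; you derive primeness from the unproven primitivity.

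The paper proceeds quite differently, and this is exactly where uncountability of $\mathbb F$ enters. It first handles sense~3 by a finite-dimensional reduction: using Proposition~\ref{Ltd}, one writes $I=\operatorname{Ann}(M\oplus N\otimes\mathrm{SW})$ with $M,N$ integrable, restricts to $\mathfrak g(S)$ for finite $S$, and invokes the Grantcharov--Serganova theory of bounded $\mathfrak{sp}(2n)$-modules to see that $I\cap\operatorname{U}(\mathfrak g(S))$ is an intersection of primitive ideals. Sense~1 is then established by a Dixmier-style argument (working in $(\operatorname{U}(\mathfrak g(\infty))/I)\otimes\mathbb F[X]$ and analysing $1-iX$) showing that every element of the primitive radical is nilpotent modulo $I$; this step genuinely requires $\mathbb F$ to be uncountable, in order to force the auxiliary simple module to admit a central character. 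Your remark that uncountability is ``auxiliary'' and relevant only to an alternative treatment of sense~3 misplaces its role: in the paper it is essential, and precisely for sense~1.
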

\begin{proof}\footnote{After this paper was completed, we found a similar argument in~\cite[Chapter~9]{MCR}, so we present the proof here for the mere convenience of the reader.} Consider definition 3 first. Clearly, it suffices to show that if $\sqrt I$ is defined as in definition 3, then $I\cap\operatorname{U}(\mathfrak g(S))=\sqrt{I\cap\operatorname{U}(\mathfrak g(S))}$ for any finite subset $S\subset\mathbb Z_{>0}$.

Recall that $\mathfrak{sp}(2n)$ has two nonisomorphic Shale-Weil (oscillator) representations with respective highest weights $\frac12\varepsilon_1+...+\frac12\varepsilon_n$ and $\frac12\varepsilon_1+...+\frac12\varepsilon_{n-1}-\frac12\varepsilon_n$. It is easy to check that $\mathfrak{sp}(\infty)$ has infinitely many nonisomorphic Shale-Weil representations obtained as direct limits of Shale-Weil representations of $\mathfrak{sp}(2n)$. Since the annihilators of all these $\mathfrak{sp}(\infty)$-modules coincide, for our purposes  it suffices to consider one fixed Shale-Weil $\mathfrak{sp}(\infty)$-module which we denote by SW.

Proposition~\ref{Ltd} implies that a semiintegrable ideal is the annihilator of an $\mathfrak{sp}(\infty)$-module of the form $M\oplus N\otimes\mbox{SW}$, where $M$ are $N$ integrable $\mathfrak{sp}(\infty)$-modules. Therefore, for a finite set $S$, we have
$$I\cap\mathfrak g(S)=\operatorname{Ann}_{\operatorname{U}(\mathfrak g(S))}(M|_{\mathfrak g(S)}\oplus N|_{\mathfrak g(S)}\otimes \mbox{SW}|_{\mathfrak g(S)}).$$
Note that the restriction of $\mbox{SW}$ to $\mathfrak g(S)$ is a direct sum of infinitely many copies of Shale-Weil representations of $\mathfrak g(S)$. A Shale-Weil representation of $\mathfrak g(S)$ is a (highest) weight module with 1-dimensional weight spaces.
Thus a tensor product of a Shale-Weil representation of $\mathfrak g(S)$ with a finite-dimensional $\mathfrak g(S)$-module is a direct sum of finitely many bounded weight modules of finite length, each of which affords a generalized central character. The annihilator of any such a simple module $P$ equals the annihilator of a simple module in the block of $P$, see~\cite[Theorems 5.1, 5.2]{GS}. Therefore $I\cap\mathfrak g(S)$ is the intersection of some set of primitive ideals, and hence \begin{equation}I\cap\operatorname{U}(\mathfrak g(S))=\sqrt{I\cap\operatorname{U}(\mathfrak g(S))}.\label{Erad}\end{equation}

Next we show that $I=\sqrt I$ with respect to definition 1. This will automatically imply that $I=\sqrt I$ with respect to definition 2. Let $\sqrt I$ be the radical of $I$ with respect to definition 1. We prove first (following closely~\cite[3.1.15]{Dix}) that for any $i\in \sqrt I$ there exists $n\in\mathbb Z_{>0}$ such that $i^n\in I$.

Fix $i\in\sqrt I/I$ and consider the algebra $C:=(\operatorname{U}(\mathfrak g)/I)\otimes \mathbb F[X]$, where $X$ is a new variable. Then $C(1-iX)=C$ or $C(1-iX)\ne C$, where $C(1-iX)$ denotes the left ideal of $C$ generated by $(1-iX)$. If $C(1-iX)=C$, there exist $a_0, a_1,..., a_n$ such that $$(1-iX)(a_0+a_1X+a_2X^2+...+a_nX^n)=1.$$ Consequently, $a_s=i^s$ for $s\le n$ and $i^{n+1}=0$, which is precisely what we need to prove.

Assume next that $C(1-iX)\ne C$. Then there is a simple $C$-module $M$ and an element $m\in M$ such that $(1-iX)m=0$. We claim that there exists $\lambda\in \mathbb F$ such that $Xm'=\lambda m'$ for any $m'\in M$, or equivalently  such that $(X-\lambda)M=0$. Indeed, assume to the contrary that, for any $\lambda\in\mathbb F$, the homomorphism $$\phi_\lambda: M\to M,\hspace{10pt}m'\mapsto (X-\lambda)m',$$ is nonzero. The fact that $X-\lambda$ belongs to the center of $C$ implies that the kernel and cokernel
of $\phi_\lambda$ equal 0, and therefore that $\phi_\lambda$ is an automorphism of $M$ for any $\lambda$. The collection of elements
$$\{\phi^{-1}_\lambda m\}_{\lambda\in\mathbb F}$$ is uncountable as $\mathbb F$ is uncountable. On the other hand, $M$ is at most countable dimensional over $\mathbb F$ because $C$ is countable dimensional over $\mathbb F$. Thus there exist nonzero sequences $\lambda_1, \lambda_2, ..., \lambda_n\in\mathbb F$ and $\alpha_1, ..., \alpha_n\in\mathbb F$ such that
$$\Sigma_i\alpha_i\phi_{\lambda_i}^{-1}m=0.$$
Clearly, $\Sigma_i\alpha_i\phi_{\lambda_i}^{-1}$ is an endomorphism of $M$, and hence $\Sigma_i\alpha_i\phi_{\lambda_i}^{-1}M=0$. Therefore $P(X)M=0$, where \begin{center}$P(X)=(X-\lambda_1)(X-\lambda_2)...(X-\lambda_n){\Sigma}_i\frac{\alpha_i}{X-\lambda_i}.$\end{center} This implies that $(X-\lambda)M=0$ for some root $\lambda\in\mathbb F$ of the polynomial $P(X)$.

Finally, we have $$0=(1-iX)m=m-\lambda im,$$and thus $im\ne 0$. Consequently, $i$ does not annihilate $M$, and hence $i\not\in\sqrt I/I$. This shows that our assumption is contradictory, and as a consequence we obtain that for any $i\in\sqrt I$ there exists $n$ such that $i^n\in I$.

Next, one shows exactly as in~\cite[3.1.15]{Dix} that for any finite set $S$ there exists $n\in\mathbb Z_{>0}$ such that
$$(\sqrt I\cap\operatorname{U}(\mathfrak g(S)))^n\subset I\cap\operatorname{U}(\mathfrak g(S)).$$ This, together with~(\ref{Erad}), implies

$$(\sqrt I\cap\operatorname{U}(\mathfrak g(S)))^n\supset \sqrt{I\cap\operatorname{U}(\mathfrak g(S))}=I\cap\operatorname{U}(\mathfrak g(S)).$$ Therefore, $\sqrt I\cap\operatorname{U}(\mathfrak g(S))=I\cap\operatorname{U}(\mathfrak g(S))$ for any finite set $S$, and hence $\sqrt I=I$.

\end{proof}

\section*{Appendix A: Roots, weights, and splitting Borel subalgebras}
\label{SSsodef}
The Lie algebra $\mathfrak{gl}(\infty)$ can be defined  as the Lie algebra of infinite matrices $(a_{ij})_{i, j\in\mathbb Z}$ each of which has at most finitely many nonzero entries. Equivalently, $\mathfrak{gl}(\infty)$ can be defined by giving an explicit basis. Let $\{e_{ij}\}_{i, j\in\mathbb Z}$ be a basis of a countable-dimensional vector space denoted by $\mathfrak{gl}(\infty)$. The structure of a Lie algebra on $\mathfrak{gl}(\infty)$ is given by the formula
$$[e_{ij},
e_{kl}]=\delta_{jk}e_{il}-\delta_{il}e_{kj},$$
where $i, j, k, l\in\mathbb Z$ and $\delta_{mn}$ is Kronecker's delta.

The Lie algebras $\mathfrak o^{B/D}(\infty)$ and $\mathfrak{sp}(\infty)$ can be defined as subalgebras of $\mathfrak{gl}(\infty)$ spanned by the following vectors

$$\begin{array}{c|cccc}
\mathfrak o^B(\infty)&e^B_{i, -j}:=e_{i, j}-e_{-j, -i},&\begin{array}{c}e^B_{i, j}:=e_{i, -j}-e_{j, -i},\\e^B_{-i, -j}:=e_{-i, j}-e_{-j, i},\end{array}&e^B_{\pm i}:=e_{\pm i, 0}-e_{0, \mp i}\\\hline
\mathfrak o^D(\infty)&e^D_{i, -j}:=e_{i, j}-e_{-j, -i},&e^D_{i, j}:=e_{i, -j}-e_{j, -i},&e^D_{-i, -j}:=e_{-i, j}-e_{-j, i}&\\\hline
\mathfrak{sp}(\infty)&e^C_{i, -j}:=e_{i, j}-e_{-j, -i},&e^C_{i, j}:=e_{i, -j}+e_{j, -i},&e^C_{-i, -j}:=e_{-i, j}+e_{-j, i},&\\
\end{array}$$for $i, j\in\mathbb Z_{>0}$.
We set $$\mathfrak g^B(\infty):=\mathfrak o^B(\infty),\hspace{10pt}\mathfrak g^C(\infty):=\mathfrak{sp}(\infty),\hspace{10pt} \mathfrak g^D(\infty):=\mathfrak o^D(\infty).$$
Note that the Lie algebra spanned by $\{e_{i, -j}^B\}_{i, j\in\mathbb Z_{>0}}$  is isomorphic to $\mathfrak{gl}(\infty)$, and let $\mathfrak g^A(\infty)\cong\mathfrak{sl}(\infty)$ be the commutator subalgebra of this Lie algebra.

The splitting Cartan subalgebras introduced in Subsection~\ref{SSspl} can be chosen as follows:
\begin{center}$\mathfrak h^A:=\PenkovPetukhovBr{e_{i, -i}^B-e_{j, -j}^B}_{i, j\in\mathbb Z_{>0}}$, $\mathfrak h^{B/C/D}:=\PenkovPetukhovBr{e^{B}_{i, -i}}_{i\in\mathbb Z_{>0}}$.\end{center}
Then the Lie algebra $\mathfrak g^{A/B/C/D}(\infty)$ has the root decomposition $$\mathfrak g^{A/B/C/D}(\infty)=\mathfrak h^{A/B/C/D}\oplus\bigoplus_{\alpha\in\Delta^{B/C/D}}\mathfrak g^{A/B/C/D}(\infty)^{\alpha}$$
which is similar to the usual root decomposition  respectively of $\mathfrak{sl}(n)$, $\mathfrak o(2n+1)$, $\mathfrak{sp}(2n)$ and $\mathfrak o(2n)$. Here
$$\Delta^A:=\{\varepsilon_i-\varepsilon_j\}_{i\ne j\in\mathbb Z_{>0}},\hspace{74pt}\Delta^B:=\{\varepsilon_i-\varepsilon_j, \pm\varepsilon_i, \pm(\varepsilon_i+\varepsilon_j)_{i\ne j}\}_{i, j\in\mathbb Z_{>0}},$$
$$\Delta^C:=\{\varepsilon_i-\varepsilon_j, \pm2\varepsilon_i, \pm(\varepsilon_i+\varepsilon_j)_{i\ne j}\}_{i, j\in\mathbb Z_{>0}},\hspace{10pt}\Delta^D:=\{\varepsilon_i-\varepsilon_j, \pm(\varepsilon_i+\varepsilon_j)_{i\ne j}\}_{i, j\in\mathbb Z_{>0}},$$
where the system of vectors $\{\varepsilon_j\}_{j\in\mathbb Z_{>0}}$ in $(\mathfrak h^{B/C/D})^*$ is dual to the basis $\{e_{i, -i}\}_{i\in\mathbb Z_{>0}}$ of $\mathfrak h^{B/C/D}$, and the system of vectors $\{\varepsilon_j\}_{j\in\mathbb Z_{>0}}$ for $\mathfrak h^A$ is the restriction of $\{\varepsilon_j\}_{j\in\mathbb Z_{>0}}$ from $\mathfrak h^{B/C/D}$ to $\mathfrak h^A$.

A {\it splitting Borel subalgebra} $\mathfrak b\subset\mathfrak g^{A/B/C/D}(\infty)$ is defined as the inductive limit of Borel subalgebras $\mathfrak b(n)\subset\mathfrak g(n)$ in the sequence~(\ref{Ech1}). Any splitting Borel subalgebra is conjugate via $\operatorname{Aut}(\mathfrak g(\infty))$ to a splitting Borel subalgebra containing $\mathfrak h^{A/B/C/D}$, and we only consider splitting Borel subalgebras $\mathfrak b$ satisfying this assumption. Fixing $\mathfrak b$ is equivalent to splitting $\Delta=\Delta^{A/B/C/D}$ into $\Delta^+\sqcup\Delta^-$ with the usual properties

$\bullet$ $\alpha, \beta\in \Delta^{\pm}, \alpha+\beta\in\Delta \Rightarrow \alpha+\beta\in\Delta^{\pm}$,

$\bullet$ $\alpha\in\Delta^{\pm}\Leftrightarrow-\alpha\in\Delta^{\mp}$.

It has been observed in~\cite{DP} that the splitting Borel subalgebras of $\mathfrak{sl}(\infty)$ containing $\mathfrak h^A$ are in one-to-one correspondence with linear orders $\prec$ on the set $\mathbb Z_{>0}$: given such an order, the corresponding set of positive roots is
$$\Delta^{A+}(\prec):=\{\varepsilon_i-\varepsilon_j\}_{i\prec j}.$$
In a similar way, the splitting Borel subalgebras of $\mathfrak o^B(\infty)$ and $\mathfrak{sp}(\infty)$ containing $\mathfrak h^{B/C}$ are in one-to-one correspondence with linear orders on $\mathbb Z_{>0}$ together with a partition $S_+\sqcup S_-$ of $\mathbb Z_+$: given such datum, the corresponding subset of positive roots is
\begin{center}$\{\varepsilon_i+\varepsilon_j\}_{i, j\in S_+}\cup\{\varepsilon_i-\varepsilon_j\}_{i\in S_+, j\in S_-}\cup\{-\varepsilon_i-\varepsilon_j\}_{i, j\in S_-}\cup\{\varepsilon_i+\varepsilon_j\}_{i\in S_+\prec j\in S_-}\cup$\\$\cup\{\varepsilon_i-\varepsilon_j\}_{i\in S_+\prec j\in S_+}\cup\{\varepsilon_i-\varepsilon_j\}_{i\in S_-\succ j\in S_-} \cup\{\varepsilon_i\}_{i\in S_+}\cup\{-\varepsilon_i\}_{i\in S_-}$\end{center} in the $B$-case, and\begin{center}$\{\varepsilon_i+\varepsilon_j\}_{i, j\in S_+}\cup\{\varepsilon_i-\varepsilon_j\}_{i\in S_+, j\in S_-}\cup\{-\varepsilon_i-\varepsilon_j\}_{i, j\in S_-}\cup\{\varepsilon_i+\varepsilon_j\}_{i\in S_+\prec j\in S_-}\cup$\\$\cup\{\varepsilon_i-\varepsilon_j\}_{i\in S_+\prec j\in S_+}\cup\{\varepsilon_i-\varepsilon_j\}_{i\in S_-\succ j\in S_-} \cup\{2\varepsilon_i\}_{i\in S_+}\cup\{-2\varepsilon_i\}_{i\in S_-}$\end{center} in the $C$-case.

The splitting Borel subalgebras of $\mathfrak o^D(\infty)$ containing $\mathfrak h$ are in one-to-one correspondence with linear orders on $\mathbb Z_{>0}$ together with a partition $S_+\sqcup S_-\sqcup S_0$ of $\mathbb Z_+$ such that $S_0$ is the set of $\prec$-maximal elements (thus $S_0$ consists of one element or is empty): given such datum, the corresponding subset of positive roots is \begin{center}$
\{\varepsilon_i+\varepsilon_j\}_{i\in S_+\sqcup S_0, j\in S_+}\cup\{\varepsilon_i-\varepsilon_j\}_{i\in S_+\sqcup S_0, j\in S_-\sqcup S_0}\cup\{-\varepsilon_i-\varepsilon_j\}_{i\in S_-\sqcup S_0, j\in S_-}\cup$\\$\cup\{\varepsilon_i+\varepsilon_j\}_{i\in S_+\prec j\in S_-}\cup \{\varepsilon_i-\varepsilon_j\}_{i\in S_+\prec j\in S_+\sqcup S_0}\cup\{\varepsilon_i-\varepsilon_j\}_{i\in S_-\succ j\in S_-}$.\end{center}
(In the paper~\cite[p.~229]{DP} an equivalent description of splitting Borel subalgebras of $\mathfrak g^{B/C/D}$ is given. It is based on the notion of $\mathbb Z_2$-linear order which we do not use here).

It is easy to verify that, for any splitting Borel subalgebra $\mathfrak b$, there is an automorphism $w\in \operatorname{Aut}(\mathfrak g^{B/C/D}(\infty))$ such that $w\mathfrak h^{B/C/D}=\mathfrak h^{B/C/D}$ and $S_-=\emptyset$ for $w\mathfrak b$. Hence for the purposes of this paper it suffices to consider only the case in which $S_-=\emptyset$. Under this assumption, a linear order $\prec$ on $\mathbb Z_{>0}$ determines a unique Borel subalgebra:
$$\begin{tabular}{cc}$\Delta^{B+}(\prec):=$&$\{\varepsilon_i\}_{i\in\mathbb Z_{>0}}\sqcup \{\varepsilon_i+\varepsilon_j\}_{i\ne j\in \mathbb Z_{>0}}\sqcup \{\varepsilon_i-\varepsilon_j\}_{i\in \mathbb Z_{>0}\prec j\in\mathbb Z_{>0}},$\\$\Delta^{C+}(\prec):=$&$\{2\varepsilon_i\}_{i\in\mathbb Z_{>0}}\sqcup\{\varepsilon_i+\varepsilon_j\}_{i\ne j\in \mathbb Z_{>0}}\sqcup \{\varepsilon_i-\varepsilon_j\}_{i\in \mathbb Z_{>0}\prec j\in\mathbb Z_{>0}},$\\$\Delta^{D+}(\prec):=$&$\{\varepsilon_i+\varepsilon_j\}_{i\ne j\in \mathbb Z_{>0}}\sqcup \{\varepsilon_i-\varepsilon_j\}_{i\in \mathbb Z_{>0}\prec j\in \mathbb Z_{>0}}.$\end{tabular}$$

Finally, we need to give a definition of dominant function as used in Lemma~\ref{L56}. A function $f\in\mathbb F^{\mathbb Z_{>0}}$ is {\it $\mathfrak b$-dominant} for fixed $\mathfrak b(\prec)$, if
$$\begin{tabular}{ll} (A) $f(i)-f(j)\in\mathbb Z_{\ge0}$ for $i\prec j\in\mathbb Z_{>0}$ & case $A$;\\
(B1) $f(i)\in\mathbb Z_{\ge0}$ for all $i\in\mathbb Z_{>0}$ or $f(i)\in\frac12+\mathbb Z_{\ge0}$ for all $i\in\mathbb Z_{>0}$,& case $B$;\\
(B2) $f(i)\ge f(j)$ for $i\prec j$&\\
(C1) $f(i)\in\mathbb Z_{\ge0}$ for all $i\in \mathbb Z_{>0}$,&case $C$;\\
(C2) $f(i)\ge f(j)$ for $i\prec j$&\\
(D1) $\begin{tabular}{c}$f(i)\in\mathbb Z$ for all $i\in\mathbb Z_{>0}$ or $f(i)\in\frac12+\mathbb Z$ for all $i\in\mathbb Z_{>0}$,~and\\ $f(i)\ge0$ for all $i\in \mathbb Z_{>0}$ which are not $\prec$-maximal,\end{tabular}$&case $D$.\\
(D2) $|f(i)|\ge |f(j)|$ for $i\prec j$&\end{tabular}$$



\section*{Appendix B: T-algebras and $\mathfrak{osp}$-duality}
Let $\mathcal C$ be a tensor (or monoidal) category. We define a {\it T-algebra in}  $\mathcal C$ to be an object $M$ of $\mathcal C$ together with a morphism $m: M\otimes M\to M$. Two T-algebras $M_1$ and $M_2$, in respective tensor categories $\mathcal C_1, \mathcal C_2$, are {\it isomorphic} if there exists an equivalence of tensor categories $\epsilon: \mathcal C_1 \to\mathcal C_2$ such that $\epsilon(M_1)=M_2$ and $\epsilon(m_1)=m_2$.

For example, an algebra over a field $\mathbb F$ (or a commutative ring $R$) is a T-algebra in the tensor category of $\mathbb F$-vector spaces (respectively, $R$-modules).

If $\mathfrak g$ is a Lie algebra, then the enveloping algebra $\operatorname{U}(\mathfrak g)$ defines a T-algebra $\operatorname{TU}(\mathfrak g)$ in the category of $\mathfrak g$-modules via the morphism $\operatorname{U}(\mathfrak g)\otimes \operatorname{U}(\mathfrak g)\to\operatorname{U}(\mathfrak g)$.

A left (respectively, right, or two-sided) ideal in a T-algebra $M$ is a subobject $I$ of $M$ such that $m$ maps $M\otimes I$ to $I$ (respectively, $I\otimes M$ to $I$, or both $M\otimes I$ and $I\otimes M$ to $I$). The following lemma is straightforward.

\begin{lemma}\label{L73}a) The notions of left, right and two-sided ideals in $\operatorname{TU}(\mathfrak g)$ coincide.\\
b) The lattice of (two-sided) ideals in $\operatorname{U}(\mathfrak g)$ is naturally isomorphic to the lattice of ideals in $\operatorname{TU}(\mathfrak g)$.
\end{lemma}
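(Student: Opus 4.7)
The plan is to unwind the definitions: a subobject in the tensor category of $\mathfrak g$-modules is precisely an $\operatorname{ad}(\mathfrak g)$-stable subspace, so an ideal in $\operatorname{TU}(\mathfrak g)$ is exactly an $\operatorname{ad}(\mathfrak g)$-stable ring-theoretic ideal of the appropriate side. Once this is observed, both parts reduce to the standard identity $\operatorname{ad}(x)(y)=xy-yx$ for $x\in\mathfrak g$, $y\in\operatorname{U}(\mathfrak g)$, together with the fact that $\mathfrak g$ generates $\operatorname{U}(\mathfrak g)$ as an associative algebra.

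For part a), I would take a left ideal $I\subset \operatorname{TU}(\mathfrak g)$. By the above, $I$ is an $\operatorname{ad}(\mathfrak g)$-stable left ideal of $\operatorname{U}(\mathfrak g)$. For $x\in\mathfrak g$ and $y\in I$ one has $yx=xy-\operatorname{ad}(x)(y)$, and both $xy$ and $\operatorname{ad}(x)(y)$ lie in $I$; hence $I\cdot\mathfrak g\subset I$. Since $\operatorname{U}(\mathfrak g)$ is generated as an associative algebra by $\mathfrak g$, a straightforward induction on the degree filtration gives $I\cdot\operatorname{U}(\mathfrak g)\subset I$, so $I$ is two-sided. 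The argument starting from a right ideal is symmetric, which proves a).

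For part b), the map sends an ideal $I$ of $\operatorname{TU}(\mathfrak g)$ to its underlying subspace, which is a two-sided ideal of $\operatorname{U}(\mathfrak g)$ by a). Conversely, any two-sided ring ideal $I\subset\operatorname{U}(\mathfrak g)$ satisfies $\operatorname{ad}(x)(y)=xy-yx\in I$ for $x\in\mathfrak g$, $y\in I$, so $I$ is $\operatorname{ad}(\mathfrak g)$-stable, and hence defines an ideal in $\operatorname{TU}(\mathfrak g)$. These assignments are mutually inverse, and they preserve inclusion, intersection, and sum, so they provide the desired lattice isomorphism.

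There is essentially no obstacle: the only point that needs to be stated carefully is that subobjects in the category of $\mathfrak g$-modules are exactly $\operatorname{ad}(\mathfrak g)$-stable subspaces of $\operatorname{U}(\mathfrak g)$, and that the tensor structure on $\mathfrak g$-modules compatible with the multiplication on $\operatorname{U}(\mathfrak g)$ is such that the morphism $m$ is the usual multiplication. Both facts are built into the definition of $\operatorname{TU}(\mathfrak g)$ given just above the lemma.
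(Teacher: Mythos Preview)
Your argument is correct and is exactly the intended one: the paper states the lemma as ``straightforward'' and gives no proof, and your unwinding of the definitions via $\operatorname{ad}(x)(y)=xy-yx$ together with the fact that $\mathfrak g$ generates $\operatorname{U}(\mathfrak g)$ is precisely the verification one has in mind.
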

Recall that $\operatorname{U}(\mathfrak g)$ is the quotient of the tensor algebra T$^\cdot(\mathfrak g)$ by the ideal generated by $x\otimes y-y\otimes x-[x, y]$. It is clear that $\operatorname{TU}(\mathfrak g)$ is the quotient of the $T$-algebra TT$^\cdot(\mathfrak g)$  by the ideal generated by the image of the morphism
$$\psi: \mathfrak g\otimes \mathfrak g\to (\mathfrak g\otimes\mathfrak g)\oplus\mathfrak g\subset \operatorname{TT}^\cdot(\mathfrak g)\hspace{10pt}(x\otimes y\to x\otimes y-y\otimes x-[x, y]).$$
In what follows we refer to this image as the {\it module of $\mathfrak g$-relations}. Note that the morphism $\psi$ factors through the morphism $\mathfrak g\otimes\mathfrak g\to{\bf \Lambda}^2\mathfrak g$.

Let $\mathbb T_{\mathfrak o(\infty)}^{ind}$ be the category of inductive limits of objects from $\mathbb T_{\mathfrak o(\infty)}$. Similarly, we define the category $\mathbb T_{\mathfrak{sp}(\infty)}^{ind}$. Note that the T-algebras $\operatorname{TU}(\mathfrak o(\infty))$ and $\operatorname{TU}(\mathfrak{sp}(\infty))$ are well defined in the respective categories $\mathbb T_{\mathfrak o(\infty)}^{ind}$ and $\mathbb T_{\mathfrak{sp}(\infty)}^{ind}$.
\begin{theorem}\label{Tosp}The T-algebras $\operatorname{TU}(\mathfrak o(\infty))$ and $\operatorname{TU}(\mathfrak{sp}(\infty))$ are isomorphic.\end{theorem}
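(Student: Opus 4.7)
Proof proposal for Theorem~\ref{Tosp}. The plan is to use the $\mathfrak{osp}$-duality tensor equivalence $\epsilon:\mathbb T_{\mathfrak o(\infty)}\to\mathbb T_{\mathfrak{sp}(\infty)}$ of~\cite{S} and to check that it carries the T-algebra $\operatorname{TU}(\mathfrak o(\infty))$ to $\operatorname{TU}(\mathfrak{sp}(\infty))$. First I would extend $\epsilon$ to the ind-completions $\mathbb T_{\mathfrak o(\infty)}^{ind}\to\mathbb T_{\mathfrak{sp}(\infty)}^{ind}$ by continuity; the extension is again a tensor equivalence. Since the T-algebra $\operatorname{TT}^\cdot(\mathfrak g(\infty))$ is built from the adjoint representation $\mathfrak g(\infty)$ by purely categorical tensor constructions, and since $\epsilon$ identifies the adjoint representations of $\mathfrak o(\infty)$ and $\mathfrak{sp}(\infty)$ (as recalled in the paragraph preceding Theorem~\ref{Tilosp}), $\epsilon$ transports $\operatorname{TT}^\cdot(\mathfrak o(\infty))$ to $\operatorname{TT}^\cdot(\mathfrak{sp}(\infty))$ as T-algebras.

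The heart of the proof is to verify that $\epsilon$ sends the module of $\mathfrak o$-relations (the image of $\psi_{\mathfrak o}$) to the module of $\mathfrak{sp}$-relations (the image of $\psi_{\mathfrak{sp}}$). The ``commutator'' part $x\otimes y-y\otimes x$ of $\psi$ is definable entirely via the symmetric braiding, so $\epsilon$ transports it automatically. The remaining part, the Lie bracket $b_{\mathfrak o}:\mathfrak o(\infty)\otimes\mathfrak o(\infty)\to\mathfrak o(\infty)$, is a morphism in $\mathbb T_{\mathfrak o(\infty)}^{ind}$ by the Jacobi identity, so its image $\epsilon(b_{\mathfrak o}):\mathfrak{sp}(\infty)\otimes\mathfrak{sp}(\infty)\to\mathfrak{sp}(\infty)$ is a morphism in $\mathbb T_{\mathfrak{sp}(\infty)}^{ind}$ which remains antisymmetric because $\epsilon$ preserves the braiding. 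Thus $\epsilon(b_{\mathfrak o})$ factors through ${\bf\Lambda}^2\mathfrak{sp}(\infty)\to\mathfrak{sp}(\infty)$.

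I expect the main obstacle to be showing that $\epsilon(b_{\mathfrak o})$ is a nonzero scalar multiple of $b_{\mathfrak{sp}}$. For this I would establish that the space of antisymmetric $\mathfrak{sp}(\infty)$-equivariant morphisms $\mathfrak{sp}(\infty)\otimes\mathfrak{sp}(\infty)\to\mathfrak{sp}(\infty)$ in $\mathbb T_{\mathfrak{sp}(\infty)}^{ind}$ is one-dimensional. This can be verified by decomposing ${\bf\Lambda}^2(\operatorname{\bf S}^2(V(\infty)))$ into simple tensor modules via stabilization from the finite-dimensional decomposition of ${\bf\Lambda}^2(\operatorname{\bf S}^2(V(2n)))$, and checking that $\operatorname{\bf S}^2(V(\infty))$ itself occurs there with multiplicity exactly one; the parallel statement on the $\mathfrak o$-side (with ${\bf\Lambda}^2({\bf\Lambda}^2(V(\infty)))$) ensures $b_{\mathfrak o}$ spans the corresponding space. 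Once this rigidity is in hand, I rescale the identification $\epsilon(\mathfrak o(\infty))\cong\mathfrak{sp}(\infty)$, using Schur's lemma for the simple object $\mathfrak{sp}(\infty)$, so that $\epsilon(b_{\mathfrak o})=b_{\mathfrak{sp}}$. This yields $\epsilon(\psi_{\mathfrak o})=\psi_{\mathfrak{sp}}$ and hence the desired isomorphism of T-algebras; Theorem~\ref{Tilosp} then follows from Lemma~\ref{L73}\,b).
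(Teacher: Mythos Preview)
Your proposal is correct and follows essentially the same approach as the paper: apply Serganova's tensor equivalence $D_{\mathfrak o}^{\mathfrak{sp}}$, extend it to the ind-completions, reduce to showing that the module of $\mathfrak o(\infty)$-relations is carried to the module of $\mathfrak{sp}(\infty)$-relations, and settle the bracket part by proving $\dim\operatorname{Hom}_{\mathfrak{sp}(\infty)}({\bf\Lambda}^2\mathfrak{sp}(\infty),\mathfrak{sp}(\infty))=1$. The only cosmetic differences are that the paper verifies this one-dimensionality by an explicit computation inside $V(\infty)^{\otimes 4}$ (Lemmas~\ref{L75} and~\ref{Lhom}) rather than by stabilizing a finite-dimensional decomposition, and leaves the final rescaling step implicit.
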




In~\cite{S} V.~Serganova has constructed an explicit functor $D_\mathfrak o^{\mathfrak{sp}}: \mathbb T_{\mathfrak o(\infty)}\to\mathbb T_{\mathfrak{sp}(\infty)}$ which is an equivalence of tensor categories. It is clear that this functor induces also an equivalence of the tensor categories $\mathbb T_{\mathfrak o(\infty)}^{ind}$ and $\mathbb T_{\mathfrak{sp}(\infty)}^{ind}$.
In order to prove Theorem~\ref{Tosp}, it is enough to show that $D_{\mathfrak o}^{\mathfrak{sp}}\operatorname{TU}(\mathfrak o(\infty))=\operatorname{TU}(\mathfrak{sp}(\infty))$ and that $D_\mathfrak o^{\mathfrak{sp}}m_\mathfrak o=m_{\mathfrak{sp}}$, where $m_\mathfrak o$ is the multiplication morphism for $\operatorname{TU}(\mathfrak o(\infty))$ and $m_{\mathfrak{sp}}$ is the multiplication morphism for $\operatorname{TU}(\mathfrak{sp}(\infty))$.
However, $D_\mathfrak o^\mathfrak{sp}$T$^\cdot(\mathfrak o(\infty))=$T$^\cdot(\mathfrak{sp}(\infty))$, and since $D_\mathfrak o^\mathfrak{sp}$ is a tensor functor, it suffices to show that $D_{\mathfrak o}^{\mathfrak{sp}}$ maps the module of $\mathfrak o(\infty)$-relations in TT$^\cdot(\mathfrak o(\infty))$ to the module of $\mathfrak{sp}(\infty)$-relations in TT$^\cdot(\mathfrak{sp}(\infty))$.

We need the following two lemmas.
\begin{lemma}\label{L75}We have $D_{\mathfrak o}^{\mathfrak{sp}}{\bf \Lambda}^2({\bf \Lambda}^2V(\infty))={\bf \Lambda}^2(\operatorname{\bf S}^2V(\infty))$.\end{lemma}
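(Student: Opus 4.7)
My approach exploits that Serganova's functor $D_\mathfrak o^\mathfrak{sp}$ is a tensor equivalence which, although not symmetric in the ordinary sense (it interchanges $\operatorname{\bf S}^k(V(\infty))$ and ${\bf \Lambda}^k(V(\infty))$), does respect the symmetric braiding on pairs of objects of even tensor-parity. Here the parity of an object in $\mathbb T_{\mathfrak o(\infty)}^{ind}$ is measured by the number of factors of $V(\infty)$ modulo $2$; thus $V(\infty)$ is odd while ${\bf \Lambda}^2 V(\infty)$ is even.

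First I would argue that $D_\mathfrak o^\mathfrak{sp}$ commutes with ${\bf \Lambda}^2$ applied to any even-parity object $X$: the splitting $X \otimes X = \operatorname{\bf S}^2 X \oplus {\bf \Lambda}^2 X$ into the $\pm 1$ eigenspaces of the swap $\tau_{X,X}$ is preserved by $D_\mathfrak o^\mathfrak{sp}$, because in Serganova's super-tensor construction~\cite{S} the sign twist of the braiding is $(-1)^{|X|\cdot|X|} = 1$ when $|X|$ is even. Taking $X = {\bf \Lambda}^2 V(\infty)$ and combining with the identification $D_\mathfrak o^\mathfrak{sp}({\bf \Lambda}^2 V(\infty)) = \operatorname{\bf S}^2 V(\infty)$ recalled earlier in the paper yields
\begin{equation*}
D_\mathfrak o^\mathfrak{sp}({\bf \Lambda}^2({\bf \Lambda}^2 V(\infty))) = {\bf \Lambda}^2(D_\mathfrak o^\mathfrak{sp}({\bf \Lambda}^2 V(\infty))) = {\bf \Lambda}^2(\operatorname{\bf S}^2 V(\infty)).
\end{equation*}

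The main obstacle is precisely this parity/braiding claim, which requires looking into the construction of $D_\mathfrak o^\mathfrak{sp}$ in~\cite{S}. As a safeguard, the lemma also admits a direct multiplicity verification. As $\mathfrak{gl}(\infty)$-modules one has by standard plethysm ${\bf \Lambda}^2({\bf \Lambda}^2 V(\infty)) = s_{(2,1,1)} V(\infty)$ and ${\bf \Lambda}^2(\operatorname{\bf S}^2 V(\infty)) = s_{(3,1)} V(\infty)$. Restricting via the Littlewood branching rules (using partitions with all even rows, respectively all even columns) produces the multiplicity-free decompositions
\begin{equation*}
{\bf \Lambda}^2({\bf \Lambda}^2 V(\infty))|_{\mathfrak o(\infty)} = V^{(2,1,1)} \oplus V^{(1,1)}, \qquad {\bf \Lambda}^2(\operatorname{\bf S}^2 V(\infty))|_{\mathfrak{sp}(\infty)} = V^{(3,1)} \oplus V^{(2)}.
\end{equation*}
Since $D_\mathfrak o^\mathfrak{sp}$ carries each simple tensor module $V^Y$ to $V^{Y'}$ and $(2,1,1)' = (3,1)$, $(1,1)' = (2)$, the two decompositions match term by term, which completes the proof.
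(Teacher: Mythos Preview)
Your first argument is essentially the paper's proof, phrased one level of abstraction higher. The paper carries out precisely the braiding computation you defer to~\cite{S}: it embeds ${\bf \Lambda}^2({\bf \Lambda}^2V(\infty))$ into $V(\infty)^{\otimes 4}$, records that $D_\mathfrak o^{\mathfrak{sp}}\sigma=\operatorname{sgn}(\sigma)\sigma$ for each permutation $\sigma$ of the four tensor factors, and then reads off that the three sign conditions $(12)R=-R$, $(34)R=-R$, $((13)(24))R=-R$ defining ${\bf \Lambda}^2({\bf \Lambda}^2V(\infty))$ go over to $(12)X=X$, $(34)X=X$, $((13)(24))X=-X$, which cut out ${\bf \Lambda}^2(\operatorname{\bf S}^2V(\infty))$. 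This is exactly your parity principle specialised to the case at hand; the advantage of the paper's version is that it establishes the identification \emph{as submodules of} $V(\infty)^{\otimes 4}$, which is what is actually used in the proof of Theorem~\ref{Tosp}.

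Your plethysm--Littlewood safeguard is a genuinely different route and the computations are correct: $s_{(2,1,1)}|_{\mathfrak o(\infty)}=V^{(2,1,1)}\oplus V^{(1,1)}$ and $s_{(3,1)}|_{\mathfrak{sp}(\infty)}=V^{(3,1)}\oplus V^{(2)}$, with the transposition of Young diagrams matching the two sides. The trade-off is that this only yields an abstract isomorphism of semisimple modules, whereas the paper (and your first argument) track the specific submodule inside $V(\infty)^{\otimes 4}$; for the subsequent application one would still need to know that $D_\mathfrak o^{\mathfrak{sp}}$ respects the inclusion ${\bf \Lambda}^2\mathfrak o(\infty)\hookrightarrow\mathfrak o(\infty)^{\otimes 2}$ in the expected way.
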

\begin{proof} The idea is to embed ${\bf \Lambda}^2({\bf \Lambda}^2V(\infty))$ into $$V(\infty)^{\otimes 4}:=V(\infty)\otimes V(\infty)\otimes V(\infty)\otimes V(\infty)$$ and then show that $D_{\mathfrak o}^{\mathfrak{sp}}$ maps ${\bf \Lambda}^2({\bf \Lambda}^2V(\infty))$ to ${\bf \Lambda}^2(\operatorname{\bf S}^2V(\infty))$ as submodules of $V(\infty)^{\otimes 4}$.

Since $D_\mathfrak o^{\mathfrak{sp}}V(\infty)=V(\infty)$, we see that $D_\mathfrak o^{\mathfrak{sp}}$ maps $V(\infty)^{\otimes 4}$ to $V(\infty)^{\otimes 4}$. Next, it is easy to check that, for any permutation $\sigma$ of the set $\{1, 2, 3, 4\}$,
$$D_{\mathfrak o}^{\mathfrak{sp}}\sigma=\mbox{sgn}(\sigma)\sigma$$ where $\sigma$ is considered as a linear operator on $V(\infty)^{\otimes 4}$. In what follows we write $\sigma_{\mathfrak o}$ and $\sigma_{\mathfrak{sp}}$ to distinguish the action of $\sigma$ on the fourth tensor powers of the natural representations of $\mathfrak{o}(\infty)$ and $\mathfrak{sp}(\infty)$.

The $\mathfrak o(\infty)$-module ${\bf \Lambda}^2({\bf \Lambda}^2V(\infty))$ is nothing but the $\mathfrak o(\infty)$-submodule of $V(\infty)^{\otimes 4}$ consisting of tensors $R$ such that
$$(12)_{\mathfrak o}R=-R,\hspace{10pt}(34)_{\mathfrak o}R=-R,\hspace{10pt}((13)(24))_{\mathfrak o}R=-R.$$
Consequently, $D_\mathfrak o^{\mathfrak{sp}}{\bf \Lambda}^2({\bf \Lambda}^2V(\infty))$ is the $\mathfrak{sp}(\infty)$-submodule of $V(\infty)^{\otimes 4}$ consisting of tensors $X$ such that
$$(12)_{\mathfrak{sp}}X=X,\hspace{10pt}(34)_{\mathfrak{sp}}X=X,\hspace{10pt}((13)(24))_{\mathfrak{sp}}X=-X.$$
This latter $\mathfrak{sp}(\infty)$-submodule is nothing but ${\bf \Lambda}^2(\operatorname{\bf S}^2V(\infty))$, and the proof is complete.\end{proof}
\begin{lemma}\label{Lhom}We have $\dim\operatorname{Hom}_{\mathfrak{sp}(\infty)}({\bf \Lambda}^2(\operatorname{\bf S}^2V(\infty)), \operatorname{\bf S}^2V(\infty))=1$.\end{lemma}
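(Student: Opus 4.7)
First I would establish the easy lower bound. The module $\operatorname{\bf S}^2V(\infty)$ is naturally isomorphic to $\mathfrak{sp}(\infty)$ as an $\mathfrak{sp}(\infty)$-module (via the adjoint action), and the Lie bracket $[\cdot,\cdot]\colon{\bf \Lambda}^2\mathfrak{sp}(\infty)\to\mathfrak{sp}(\infty)$ is a nonzero element of the Hom space, giving $\dim\operatorname{Hom}\ge 1$.

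For the reverse inequality, I would approximate by $\mathfrak{sp}(2n)$. Writing $\operatorname{\bf S}^2V(\infty)=\varinjlim_n\operatorname{\bf S}^2V(2n)$ and ${\bf \Lambda}^2(\operatorname{\bf S}^2V(\infty))=\varinjlim_n{\bf \Lambda}^2(\operatorname{\bf S}^2V(2n))$, any $\mathfrak{sp}(\infty)$-equivariant map $\phi$ is determined by its compatible family of $\mathfrak{sp}(2n)$-equivariant restrictions $\phi_n\colon{\bf \Lambda}^2(\operatorname{\bf S}^2V(2n))\to\operatorname{\bf S}^2V(\infty)$. Using the plethysm identity ${\bf \Lambda}^2(\operatorname{\bf S}^2V)\cong\mathbb{S}_{(3,1)}V$ as $\mathfrak{gl}(V)$-modules, followed by Littlewood's branching rule from $\mathfrak{gl}(2n)$ to $\mathfrak{sp}(2n)$ in the stable range, one obtains
\[
{\bf \Lambda}^2(\operatorname{\bf S}^2V(2n))\cong V^{(3,1)}(2n)\oplus V^{(2)}(2n)
\]
as $\mathfrak{sp}(2n)$-modules for all $n$ sufficiently large. (Here $V^{(3,1)}(2n)$ and $V^{(2)}(2n)$ denote the simple $\mathfrak{sp}(2n)$-modules of respective highest weights $3\varepsilon_1+\varepsilon_2$ and $2\varepsilon_1$.)

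Next I would decompose $\operatorname{\bf S}^2V(\infty)$ as an $\mathfrak{sp}(2n)$-module using $V(\infty)=V(2n)\oplus T_n$, where $T_n$ is the obvious trivial $\mathfrak{sp}(2n)$-complement. This yields $\operatorname{\bf S}^2V(\infty)\cong\operatorname{\bf S}^2V(2n)\oplus(V(2n)\otimes T_n)\oplus\operatorname{\bf S}^2T_n$, whose only simple $\mathfrak{sp}(2n)$-constituents are the adjoint $V^{(2)}(2n)$ (with multiplicity $1$, inside $\operatorname{\bf S}^2V(2n)$), the natural $V^{(1)}(2n)$ (infinite multiplicity), and the trivial module. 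In particular $V^{(3,1)}(2n)$ does not occur. Consequently $\phi_n$ annihilates the $V^{(3,1)}(2n)$-summand of ${\bf \Lambda}^2(\operatorname{\bf S}^2V(2n))$, while its restriction to the $V^{(2)}(2n)$-summand must, by Schur's lemma, equal $c_n\cdot\iota_n$ for some scalar $c_n\in\mathbb F$, where $\iota_n\colon\operatorname{\bf S}^2V(2n)\hookrightarrow\operatorname{\bf S}^2V(\infty)$ is the natural inclusion. Compatibility of the family $\{\phi_n\}$ along the chain of embeddings $\operatorname{\bf S}^2V(2n)\hookrightarrow\operatorname{\bf S}^2V(2(n+1))$ forces $c_n=c_{n+1}$, so $\phi$ is determined by a single scalar and $\dim\operatorname{Hom}\le 1$.

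I expect the main technical point will be the clean application of Littlewood's branching rule to establish the key negative fact that $V^{(3,1)}(2n)$ does not appear in $\operatorname{\bf S}^2V(\infty)|_{\mathfrak{sp}(2n)}$; once this is in hand, everything else reduces to Schur's lemma and elementary bookkeeping along the directed system.
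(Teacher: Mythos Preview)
Your argument is correct and takes a genuinely different route from the paper. The paper works entirely at the level of $\mathfrak{sp}(\infty)$: it realizes ${\bf \Lambda}^2(\operatorname{\bf S}^2V(\infty))$ as a direct summand of $V(\infty)^{\otimes 4}$, so that restriction gives a surjection
\[
\operatorname{Hom}_{\mathfrak{sp}(\infty)}\bigl(V(\infty)^{\otimes 4}, V(\infty)^{\otimes 2}\bigr)\twoheadrightarrow\operatorname{Hom}_{\mathfrak{sp}(\infty)}\bigl({\bf \Lambda}^2(\operatorname{\bf S}^2V(\infty)), V(\infty)^{\otimes 2}\bigr),
\]
and then uses an explicit basis of the left-hand space (contraction maps, taken from~\cite{PSt}) to check directly that the image is one-dimensional. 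Your approach instead descends to $\mathfrak{sp}(2n)$ and uses the plethysm ${\bf \Lambda}^2(\operatorname{\bf S}^2V)\cong\mathbb S_{(3,1)}V$ together with Littlewood branching to produce a single copy of the adjoint module, killing $V^{(3,1)}(2n)$ by Schur. This is more hands-on classical representation theory and avoids the tensor-category input from~\cite{PSt}; the paper's method is shorter given that input and stays within the $\mathbb T_{\mathfrak{sp}(\infty)}$ framework already in play in Appendix~B.

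One small imprecision: the statement ``$c_n=c_{n+1}$'' is not quite well posed, since $c_n$ depends on a choice of isomorphism between the $V^{(2)}(2n)$-summand of ${\bf \Lambda}^2(\operatorname{\bf S}^2V(2n))$ and $\operatorname{\bf S}^2V(2n)$. The cleanest repair is to compare $\phi$ directly with the Lie bracket: both $\phi_n$ and the restricted bracket $[\cdot,\cdot]_n$ are $\mathfrak{sp}(2n)$-maps ${\bf \Lambda}^2(\operatorname{\bf S}^2V(2n))\to\operatorname{\bf S}^2V(\infty)$ lying in a one-dimensional space, so $\phi_n=\lambda_n[\cdot,\cdot]_n$ for some scalar $\lambda_n$; restricting to $m<n$ and using $[\cdot,\cdot]_m\neq 0$ forces $\lambda_n=\lambda_m$.
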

\begin{proof} The $\mathfrak{sp}(\infty)$-module ${\bf \Lambda}^2(\operatorname{\bf S}^2V(\infty))$ is a direct summand of $V(\infty)^{\otimes 4}$. The emebedding is given by the formula
$$(xy)\wedge (zt)\mapsto [(x\otimes y+y\otimes x)\otimes(z\otimes t+t\otimes z)-(z\otimes t+t\otimes z)\otimes(x\otimes y+y\otimes x)].$$
Restriction from $V(\infty)^{\otimes 4}$ to ${\bf \Lambda}^2(\operatorname{\bf S}^2V(\infty))$ defines a surjective linear operator $$\operatorname{Hom}_{\mathfrak{sp}(\infty)}(V(\infty)^{\otimes 4}, V(\infty)^{\otimes 2})\to \operatorname{Hom}_{\mathfrak{sp}(\infty)}({\bf \Lambda}^2(\operatorname{S}^2V(\infty)), V(\infty)^{\otimes 2}).$$
A basis of the space $\operatorname{Hom}_{\mathfrak{sp}(\infty)}(V(\infty)^{\otimes 4}, V(\infty)^{\otimes 2})$ is given in~\cite[Lemma~6.1]{PSt}, and it is straightforward to check that all basis elements map to a single 1-dimensional subspace of $\operatorname{Hom}_{\mathfrak{sp}(\infty)}({\bf \Lambda}^2(\operatorname{S}^2V(\infty)), V(\infty)^{\otimes 2})$. Hence, $$\dim\operatorname{Hom}_{\mathfrak{sp}(\infty)}({\bf \Lambda}^2(\operatorname{S}^2V(\infty)), \operatorname{S}^2V(\infty))\le1.$$

On the other hand, $\operatorname{S}^2V(\infty)\cong\mathfrak{sp}(\infty)$, and the existence of the Lie bracket on $\operatorname{S}^2V(\infty)$ shows that $$\dim\operatorname{Hom}_{\mathfrak{sp}(\infty)}({\bf \Lambda}^2(\operatorname{S}^2V(\infty)), \operatorname{S}^2V(\infty))\ge1.$$ This completes the proof.\end{proof}
\begin{proof}[Proof of Theorem~\ref{Tosp}] Lemma~\ref{L75} implies that the module of $\mathfrak o(\infty)$-relations is being mapped by $D_\mathfrak o^{\mathfrak{sp}}$ to the image of a homomorphism of the form

$${\bf \Lambda}^2\mathfrak{sp}(\infty)\to [{\bf \Lambda}^2\mathfrak{sp}(\infty)\oplus\mathfrak{sp}(\infty)]\subset[\mathfrak{sp}(\infty)^{\otimes2}\oplus\mathfrak{sp}(\infty)]\subset \operatorname{T}^{\cdot}
(\mathfrak{sp}(\infty)),\hspace{10pt} x\mapsto x-\phi(x),$$
where $\phi:{\bf \Lambda}^2\mathfrak{sp}(\infty)\to\mathfrak{sp}(\infty)$ is the image of the Lie bracket morphism $${\bf \Lambda}^2\mathfrak o(\infty)\to\mathfrak o(\infty)$$ under $D_\mathfrak o^\mathfrak{sp}$. Lemma~\ref{Lhom} claims that, up to proportionality, $\phi$ coincides with the Lie bracket morphism for $\mathfrak{sp}(\infty)$. This completes the proof.
\end{proof}
Theorem~\ref{Tilosp} is a direct consequence of Theorem~\ref{Tosp} and Lemma~\ref{L73}. Note that, despite the fact that $\operatorname{TU}(\mathfrak o(\infty))$ and $\operatorname{TU}(\mathfrak{sp}(\infty))$ are isomorphic T-algebras, the algebras $\operatorname{U}(\mathfrak{o}(\infty))$ and $\operatorname{U}(\mathfrak{sp}(\infty))$ are not isomorphic, see~\cite{PP1}.

The following proposition is used in Proposition~\ref{P2X} above.
\begin{proposition}\label{Ltd}Let $X_\mathfrak o$ and $Y_\mathfrak o$ be $\mathfrak o(\infty)$-modules, $X_{\mathfrak{sp}}$ and $Y_{\mathfrak{sp}}$ be $\mathfrak{sp}(\infty)$-modules, such that $$D_\mathfrak o^\mathfrak{sp}(\operatorname{Ann}_{\operatorname{U}(\mathfrak o(\infty))}X_\mathfrak o)=\operatorname{Ann}_{\operatorname{U}(\mathfrak{sp}(\infty))}X_\mathfrak{sp},\hspace{10pt}D_\mathfrak o^\mathfrak{sp}(\operatorname{Ann}_{\operatorname{U}(\mathfrak o(\infty))}Y_\mathfrak o)=\operatorname{Ann}_{\operatorname{U}(\mathfrak{sp}(\infty))}Y_\mathfrak{sp}.$$ Then $D_\mathfrak o^\mathfrak{sp}(\operatorname{Ann}_{\operatorname{U}(\mathfrak o(\infty))}(X_\mathfrak o\otimes Y_\mathfrak o))=\operatorname{Ann}_{\operatorname{U}(\mathfrak{sp}(\infty))}(X_\mathfrak{sp}\otimes Y_{\mathfrak{sp}})$.\end{proposition}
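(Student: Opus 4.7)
The plan rests on the standard Hopf-algebraic identity
\[
\operatorname{Ann}_{\operatorname{U}(\mathfrak g)}(M \otimes N) = \Delta^{-1}\bigl(\operatorname{Ann}_{\operatorname{U}(\mathfrak g)}(M) \otimes \operatorname{U}(\mathfrak g) + \operatorname{U}(\mathfrak g) \otimes \operatorname{Ann}_{\operatorname{U}(\mathfrak g)}(N)\bigr),
\]
valid for any Lie algebra $\mathfrak g$ and any $\mathfrak g$-modules $M, N$, where $\Delta\colon \operatorname{U}(\mathfrak g)\to\operatorname{U}(\mathfrak g)\otimes \operatorname{U}(\mathfrak g)$ is the usual comultiplication, determined by $x \mapsto x \otimes 1 + 1 \otimes x$ for $x\in\mathfrak g$. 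The nontrivial inclusion uses that over a field the tensor product of two faithful modules is faithful over the tensor product of the algebras, equivalently that the kernel of $\operatorname{U}(\mathfrak g)\otimes\operatorname{U}(\mathfrak g)\to(\operatorname{U}(\mathfrak g)/\operatorname{Ann}(M))\otimes(\operatorname{U}(\mathfrak g)/\operatorname{Ann}(N))$ equals $\operatorname{Ann}(M)\otimes \operatorname{U}(\mathfrak g)+\operatorname{U}(\mathfrak g)\otimes \operatorname{Ann}(N)$. Crucially, this formula expresses $\operatorname{Ann}(M\otimes N)$ purely in terms of the T-algebra structure on $\operatorname{TU}(\mathfrak g)$ together with the individual annihilators.

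The second step is to verify that $D_\mathfrak o^{\mathfrak{sp}}$ intertwines the comultiplications $\Delta_\mathfrak o$ and $\Delta_\mathfrak{sp}$ as morphisms in the respective ind-tensor categories. Here $\Delta$ is characterized as the unique $\mathfrak g$-equivariant algebra homomorphism $\operatorname{U}(\mathfrak g)\to \operatorname{U}(\mathfrak g)\otimes \operatorname{U}(\mathfrak g)$ extending the diagonal $\mathfrak g\to \operatorname{U}(\mathfrak g)\otimes \operatorname{U}(\mathfrak g)$, $x\mapsto x\otimes 1+1\otimes x$. By Theorem~\ref{Tosp} and its proof, $D_\mathfrak o^{\mathfrak{sp}}$ is an isomorphism of T-algebras $\operatorname{TU}(\mathfrak o(\infty))\to \operatorname{TU}(\mathfrak{sp}(\infty))$ which identifies the embeddings $\mathfrak o(\infty)={\bf \Lambda}^2 V(\infty)\hookrightarrow \operatorname{U}(\mathfrak o(\infty))$ and $\mathfrak{sp}(\infty)=\operatorname{\bf S}^2 V(\infty)\hookrightarrow \operatorname{U}(\mathfrak{sp}(\infty))$. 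Because $D_\mathfrak o^{\mathfrak{sp}}$ is a tensor equivalence, it preserves the unit object and tensor products, and hence sends the diagonal morphism for $\mathfrak o(\infty)$ to the diagonal morphism for $\mathfrak{sp}(\infty)$; the uniqueness just mentioned then forces $D_\mathfrak o^{\mathfrak{sp}}(\Delta_\mathfrak o)=\Delta_\mathfrak{sp}$.

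With both ingredients in place the proof becomes essentially formal: applying $D_\mathfrak o^{\mathfrak{sp}}$ to the displayed identity for the pair $(X_\mathfrak o, Y_\mathfrak o)$, and using the preservation of $\Delta$, of $\otimes$, and of $\operatorname{U}(\mathfrak g(\infty))$ itself (Theorem~\ref{Tosp}) together with the hypothesis that $D_\mathfrak o^{\mathfrak{sp}}$ maps $\operatorname{Ann}(X_\mathfrak o)$ to $\operatorname{Ann}(X_\mathfrak{sp})$ and $\operatorname{Ann}(Y_\mathfrak o)$ to $\operatorname{Ann}(Y_\mathfrak{sp})$, produces the corresponding expression for $(X_\mathfrak{sp}, Y_\mathfrak{sp})$, which in turn equals $\operatorname{Ann}_{\operatorname{U}(\mathfrak{sp}(\infty))}(X_\mathfrak{sp}\otimes Y_\mathfrak{sp})$ by a second application of the identity.

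The main technical obstacle will be confirming that $\Delta$ genuinely lives in the ind-category $\mathbb T_{\mathfrak g(\infty)}^{ind}$ and that the uniqueness/generation argument for $\Delta$ survives in this infinite-dimensional setting. I expect this to go through by reduction to the finite-rank level: writing $\operatorname{U}(\mathfrak g(\infty))=\varinjlim_n \operatorname{U}(\mathfrak g(n))$, each $\Delta_{\mathfrak g(n)}$ is the standard comultiplication on the finite-dimensional Hopf algebra $\operatorname{U}(\mathfrak g(n))$, and since the functor $D_\mathfrak o^{\mathfrak{sp}}$ commutes with filtered colimits of $\mathfrak g$-modules, the desired intertwining property passes to the ind-limit.
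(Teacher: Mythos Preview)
Your proposal is correct and follows essentially the same approach as the paper: both proofs rest on the Hopf-algebraic identity $\operatorname{Ann}(M\otimes N)=\Delta^{-1}(\operatorname{Ann}(M)\otimes\operatorname{U}(\mathfrak g)+\operatorname{U}(\mathfrak g)\otimes\operatorname{Ann}(N))$ together with the fact that $D_\mathfrak o^{\mathfrak{sp}}$ intertwines the comultiplications. The paper states these two ingredients without elaboration, whereas you supply justifications (the faithful-tensor argument for the identity, and the uniqueness argument for $\Delta$), but the logical skeleton is the same.
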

\begin{proof}
Let $\Delta_\mathfrak o$ be the diagonal morphism of Lie algebras $\mathfrak o(\infty)\to\mathfrak o(\infty)\oplus\mathfrak o(\infty)$. This morphism induces the comultiplication morphism
$$\Delta_\mathfrak o^U: \operatorname{U}(\mathfrak o(\infty))\to \operatorname{U}(\mathfrak o(\infty))\otimes\operatorname{U}(\mathfrak o(\infty)).$$We have
\begin{eqnarray}\begin{aligned}\label{Fl}\operatorname{Ann}_{\operatorname{U}(\mathfrak o(\infty))}(X_\mathfrak o\otimes Y_\mathfrak o)=(\Delta^U_\mathfrak o)^{-1}(\operatorname{Ann}_{\operatorname{U}(\mathfrak o(\infty))}X_\mathfrak o\otimes\operatorname{U}(\mathfrak o(\infty)_r)+\\+\operatorname{U}(\mathfrak o(\infty)_l)\otimes\operatorname{Ann}_{\operatorname{U}(\mathfrak o(\infty))}Y_\mathfrak o),\end{aligned}\end{eqnarray}where the subscripts ``l'' and ``r'' refer to the left and right direct summands of $\mathfrak o(\infty)\oplus\mathfrak o(\infty)$.
Since $D_\mathfrak o^\mathfrak{sp}\operatorname{U}(\mathfrak o(\infty))=\operatorname{U}(\mathfrak{sp}(\infty))$ according to Theorem~\ref{Tosp}, and $D_\mathfrak o^\mathfrak{sp}\Delta_\mathfrak o^U=\Delta_\mathfrak{sp}^U
$, formula~(\ref{Fl}) implies that $$D_\mathfrak o^\mathfrak{sp}(\operatorname{Ann}_{\operatorname{U}(\mathfrak o(\infty))}(X_\mathfrak o\otimes Y_\mathfrak o))=\operatorname{Ann}_{\operatorname{U}(\mathfrak{sp}(\infty))}(X_\mathfrak{sp}\otimes Y_{\mathfrak{sp}}).$$
\end{proof}



\begin{thebibliography}{100}
\bibitem[BhS]{BhS} Yu.~Bahturin, H.~Strade, Some examples of locally finite simple Lie algebras. {\it Arch. Math.} (Basel) {\bf 65} (1995), 23--26.
\bibitem[Ba1]{Ba1} A. Baranov, Complex finitary simple Lie algebras, {\it Arch. Math.} (Basel) {\bf 71} (1998), 1--6.
\bibitem[Ba2]{Ba2} A.~Baranov, Finitary simple Lie algebras. {\it J. Algebra} {\bf 219} (1999), 299--329.
\bibitem[BS]{BS} A.~Baranov, H.~Strade, Finitary Lie algebras, {\it J. Algebra} {\bf 254} (2002), 173--211.
\bibitem[BZh]{BZh1} A.~Baranov, A.~Zhilinskii, Diagonal direct limits of simple Lie algebras, {\it Comm. Algebra} {\bf 27} (1999), 2749--2766.
\bibitem[BV]{Barbasch-Vogan}  D. Barbasch, D. Vogan, Primitive ideals and orbital integrals in complex classical groups, {\it Math. Ann.} {\bf 259} (1982), 153--199.
\bibitem[Ca]{Cr} R.~Carter, Simple groups of Lie type, {\it Pure and Applied Mathematics}, Vol. {\bf 28}. John Wiley \& Sons, London-New York-Sydney, 1972.
\bibitem[CM]{CM} D.~Collingwood, W.~McGovern, {\it Nilpotent orbits in semisimple Lie algebras}, Van Nostrand Reinhold Math. Ser., Van Nostrand Reinhold Co., New York, 1993.
\bibitem[DPS]{DPS} E.~Dan-Cohen, I.~Penkov, V.~Serganova, A Koszul category of representations of finitary Lie algebras, {\it Adv. Math.} {\bf 289} (2016), 250--278.
\bibitem[DPSn]{DPSn}E.~Dan-Cohen, I.~Penkov,  N.~Snyder, Cartan subalgebras of root-reductive Lie algebras, {\it J.~Algebra} {\bf 308} (2007), 583--611.
\bibitem[DP1]{DP} I. Dimitrov, I. Penkov, Weight Modules of Direct limit Lie Algebras, {\it IMRN} {\bf 199} (1999), 223--249.
\bibitem[DP2]{DP2}  I.~Dimitrov, I.~Penkov, Ind-varieties of generalized flags as homogeneous spaces for classical ind-groups, {\it IMRN} {\bf 50} (2004), 2935--2953.
\bibitem[Dix]{Dix} J. Dixmier, Algebres Enveloppantes, Gauthier-Villars, 1974.
\bibitem[GS]{GS}  D.~Grantcharov, V.~Serganova, Category of sp(2n)-modules with bounded weight multiplicities. {\it Mosc. Math. J.} {\bf 6} (2006), 119--134, 222.
\bibitem[Jo]{Jo} A.~Joseph, On the associated variety of a primitive ideal, {\it J. Algebra} {\bf 93} (1985), 509--523.
\bibitem[Knu]{Knu} D.~Knuth, {\it The art of computer programming. Volume 3}. Sorting and searching, Addison-Wesley Series in Computer Science and Information Processing,  Addison-Wesley, 1973.
\bibitem[LO]{LO} I.~Losev, V.~Ostrik, Classification of finite-dimensional irreducible modules over W-algebras, {\it Compos. Math.} {\bf 150} (2014), 1024--1076.
\bibitem[Lu]{Lusztig} G. Lusztig, Unipotent classes and special Weyl group representations, {\it J. Algebra} {\bf 321} (2009), 3418--3449.
\bibitem[MCR]{MCR} J.~McConnell, J.~Robson, Noncommutative Noetherian Rings, Graduate Studies in Mathematics 30, American Mathematical Society 2000.
\bibitem[PP1]{PP1} I.~Penkov, A.~Petukhov, On ideals in the enveloping algebra of a locally simple Lie algebra, {\it IMRN} {\bf 13} (2015), 5196--5228.
\bibitem[PP2]{PP2} I.~Penkov, A.~Petukhov, Annihilators of highest weight $\mathfrak{sl}(\infty)$-modules, to appear in {\it Transformation groups}, arXiv:1410.8430.
\bibitem[PP3]{PP3} I.~Penkov, A.~Petukhov, Primitive ideals of $\operatorname{U}(\frak{sl}(\infty))$, in preparation.
\bibitem[PS]{PS}  I. Penkov, V. Serganova, Tensor representations of Mackey Lie algebras and their dense subalgebras, in {\it Developments and Retrospectives in Lie Theory: Algebraic Methods}, Developments in Mathematics, vol. {\bf 38}, Springer Verlag, 2014, pp. 291--330.
\bibitem[PSt]{PSt} I.~Penkov, K.~Styrkas, Tensor representations of infinite-dimensional root-reductive Lie algebras, in {\it Developments and Trends in Infinite-Dimensional Lie Theory}, Progress in Mathematics {\bf 288}, Birkh\"auser, 2011, pp. 127--150.
\bibitem[SSn]{SSn} S.~Sam, A.~Snowden, Stability patterns in representation theory, {\it Forum Math. Sigma} {\bf 3} e11 (2015), 108 pp.
\bibitem[Sa]{Sa} A. Sava, Annihilators of simple tensor modules, master's thesis, Jacobs University Bremen, 2012, arXiv: 1201.3829.
\bibitem[S]{S} V.~Serganova, Classical Lie superalgebras at infinity, in {\it Advances in Lie superalgebras}, Springer INdAM Ser., 7, Springer, Cham, 2014, pp. 181--201.
\bibitem[Zh1]{Zh1} A. Zhilinskii, {\it Coherent systems of representations of inductive families of simple complex Lie algebras}, (Russian) preprint of Academy of Belarussian SSR, ser. 38 (438), Minsk, 1990.
\bibitem[Zh2]{Zh2} A. Zhilinskii, Coherent finite-type systems of inductive families of non-diagonal inclusions, (Russian) {\it Dokl. Acad. Nauk Belarusi} {\bf 36:1}(1992), 9--13, 92.
\bibitem[Zh3]{Zh3} A. Zhilinskii, On the lattice of ideals in the universal enveloping algebra of a diagonal Lie algebra, preprint, Minsk, 2011.
\end{thebibliography}
\end{document}